\newtheorem{theorem}{Theorem}[section]
\newtheorem{lemma}[theorem]{Lemma}
\theoremstyle{definition}
\newtheorem{definition}[theorem]{Definition}
\newtheorem{corollary}[theorem]{Corollary}
\newtheorem{proposition}[theorem]{Proposition}
\theoremstyle{remark}
\newtheorem{remark}[theorem]{{\bf Remark}}
\newtheorem{claim}[theorem]{{\bf Claim}}
\numberwithin{equation}{section}
\begin{document}

\def\C{\mathbb C}
\def\R{\mathbb R}
\def\X{\mathbb X}
\def\Z{\mathbb Z}
\def\Y{\mathbb Y}
\def\Z{\mathbb Z}
\def\N{\mathbb N}
\def\cal{\mathcal}
\def\cD{\cal D}
\def\tD{\tilde{{\cal D}}}
\def\F{\cal F}
\def\tf{\tilde{f}}
\def\tg{\tilde{g}}
\def\tu{\tilde{u}}

\def\cal{\mathcal}
\def\b{\mathcal B}
\def\c{\mathcal C}
\def\cc{\mathbb C}
\def\x{\mathbb X}
\def\r{\mathbb R}
\def\T{\mathbb T}
\def\uu{(U(t,s))_{t\ge s}}
\def\vv{(V(t,s))_{t\ge s}}
\def\xx{(X(t,s))_{t\ge s}}
\def\yy{(Y(t,s))_{t\ge s}}
\def\zz{(Z(t,s))_{t\ge s}}
\def\ss{(S(t))_{t\ge 0}}
\def\tt{(T(t,s))_{t\ge s}}
\def\rr{(R(t))_{t\ge 0}}
%% end of Preamble
\title[Traveling waves]{\textbf{TRAVELING WAVES IN reaction-diffusion EQUATIONS WITH General Delays}}

\author[Barker]{William Barker}
\address{Department of Mathematics and Statistics, University of Arkansas at Little Rock, \\
	2801 
	S University Ave, Little Rock, AR 72204. USA\\
	Email: wkbarker@ualr.edu}
 
\author[N.V. Minh]{Nguyen Van Minh}
\address{Department of Mathematics and Statistics, University of Arkansas at Little Rock, \\
	2801 
	S University Ave, Little Rock, AR 72204. USA\\
Email: mvnguyen1@ualr.edu}

\thanks{}

\date{\today}
\subjclass[2000]{Primary: 35C07 ; Secondary: 35K57 }
\keywords{Traveling waves; reaction-diffusion equations with delay}

\begin{abstract} We study the existence of traveling waves of reaction-diffusion equations of the form $\partial u(x,t)/\partial t = \Delta u(x,t-\tau_1)+f(u(x,t),u(x,t-\tau_2))$, where $\tau_1,\tau_2$ are positive constants. This spatial model could be applied to biological populations whose dynamics depend on the rates of migration with past history in addition to some delays in the rates of reproduction and death of individuals at  the same location.	
	We extend the monotone iteration method to systems that satisfy typical monotone conditions by thoroughly studying the Green function of the functional equation $x''(t)-ax'(t+r)-bx(t+r)=f(t)$, where $a\not=0, b>0$. In the framework of the monotone iteration method that is developed based on this result, upper and lower solutions are found for Fisher-KPP and Belousov-Zhabotinski equations to show that traveling waves exist for these equations when delays $r_1,r_2$  are small. The obtained results appear to be new.
\end{abstract}
\maketitle

\section{Introduction}

When justifying for the use of the reaction- diffusion model for interacting particle systems in population dynamics in spatial ecology  Durrett-Levin (\cite{durlev}) and Cantrell-Cosner (\cite[p. 25]{cancos}) mentioned two factors that affect the dynamics of the populations: (i) random migration and (ii) reproduction and death rates that depend on the number of individuals at the same location, or in the local neighborhood of that location. The random migration factor leads to the diffusion term $d\Delta u$, where $d$ is a positive constant, while the other factor leads to the "usual" reaction term as in the non-spatial model. If these factors instantly affect the dynamics of the population, then a reaction-diffusion model for the systems can be derived as (after a normalization of coefficients)
$$
\frac{\partial u(x,t)}{\partial t} =d\Delta u(x,t)+(a-bu(x,t))u(x,t),
$$
where $u(x,t)$ is the density of the population at location $x$ at time $t$. The reaction-diffusion model will be more realistic if we assume that the dynamics of the population depends not only on the instantaneous rates of change, but on a past history of (i) the random migration and (ii)  reproduction and death of the individuals at the same location.
With this assumption a  typical model taking into account of this delay effect considered in our paper may look like
\begin{equation}\label{fkpp}
\frac{\partial u(x,t)}{\partial t} =d\Delta u(x,t-\tau_1)+u(x,t-\tau_2)(1-u(x,t)),
\end{equation}
where $\tau_1,\tau_2$ are given positive constants. When $\tau_1=\tau_2=0$, the existence of traveling wave solutions and their stability in the above equation that becomes a Fisher-KPP equation
\begin{equation*}%\label{fkpp}
\frac{\partial u(x,t)}{\partial t} =d\Delta u(x,t)+u(x,t)(1-u(x,t)),
\end{equation*}
where $x\in \R$, 
are very well studied. The reader can find a complete account of the results and concepts in the classical monograph \cite{fif}. A rather more general and complete account of the existence and properties of traveling waves for parabolic equations can be found in \cite{volvolvol}.

A traveling wave solution $u(x,t)$ of Eq.(\ref{fkpp}) is defined as a twice continuously differentiable numerical function $\phi $ such that $u(x,t)=\phi (x\pm ct)$ for all $x,t \in \R$, where $c$ is a constant, and 
\begin{equation}\label{boundary}
\lim_{t\to -\infty} \phi(t)=0, \ \ \lim_{t\to\infty} \phi(t)=1.
\end{equation}
Such functions $\phi$ are often called the traveling wave front, where $c$ is the wave speed. The search for such functions $\phi$ is carried out by investigating the existence of solutions to an ordinary differential equation after the invariant substitution $u(x,t)=\phi (x\pm ct)$. Namely, one studies special solutions of the equation
\begin{equation}
\pm c\phi '(\xi )= \phi ''(\xi )+\phi (\xi )(1-\phi (\xi ))
\end{equation}
that satisfy (\ref{boundary}). The first attempt to study the existence and properties of traveling waves in reaction-diffusion equations with delay in reaction term
was made in \cite{sch}. Subsequently, a numerous amount of work has been done to study the affect of delay on the existence of traveling waves as well as the properties in diffusion-reaction systems, see e.g. \cite{fanshashi}, \cite{liumeiyan}, \cite{ma}, \cite{vol} \cite{wuzou} and the references there in for a few. It is noticed that the delay in the reaction-diffusion models that were considered so far was incorporated in the reaction term only. That leaves open a question as what happens if there is a delay in diffusion term of the models. It is the purpose of this paper is to study the existence of traveling waves in reaction-diffusion models when delay appears in both reaction as well as diffusion terms that satisfy some typical monotone conditions.  We will extend the method of monotone iteration to prove the existence of traveling waves to the models with delay appearing in both diffusion and reaction terms. For an account of this method the reader may see, e.g. \cite{boumin,ma,wuzou}. One of the very first and crucial step in this method is to study the bounded solutions whose existence is established by the Perron Theorem for functional equations. In fact, we need to know if the only bounded solution to the equation
\begin{equation}\label{fde1}
x''(t)-ax'(t+r)-bx(t+r)=f(t)
\end{equation}
is negative whenever the function $f(t)$ is positive, where $a>0,b>0$, $r=\tau_1 c$, $c$ is the wave speed of the traveling wave. When $\tau_1=0$ this equation is an ODE and the Green function can be found explicitly, so the answer to the question of negativity or positivity of the only bounded solution can be answered easily. The problem is much more complicated when $\tau_1>0$ as there is no explicit expression for the Green function. The existence of the latter is very well known in the theory of Functional Differential Equations. A great deal of this work will be devoted to the study of the positivity of bounded solutions. We will use the theory developed by Mallet-Paret in \cite{mal} combined with some standard techniques of Complex Functions to show that for sufficiently small $\tau_1>0$ the characteristic equation of Eq.(\ref{fde1}) has no root on the imaginary axis so the Green function exists and hence is negative. This allows us to extend the monotone iteration method to prove the existence of traveling waves for reaction-diffusion equations with delay appearing in both diffusion and reaction terms.

\medskip
This paper will be organized as follows: Section \ref{linear} will be devoted to the study of linear functional equations of the form (\ref{fde1}). We will give an answer to the question as if Eq.(\ref{fde1}) has a unique bounded solution if $f(\cdot )$ is a given bounded and continuous function. Then, we study the negativity of the Green function in Section \ref{positivity}. Theorem \ref{the 1} is the key for the construction of the monotone iteration method. In Section \ref{monotone iteration} we prove Theorem \ref{the main2} which summarizes all of the construction of the monotone iteration method. Construction of upper and lower solutions to the wave equations of particular models is always a nontrivial step in the applications of the monotone iteration method. We show that for Fisher-KPP and Belousov-Zhabotinski upper and lower solutions can be found explicitly.

\medskip
To the best of our knowledge this paper is the first study of traveling waves in reaction-diffusion equations with delay appearing in both the diffusion and reaction terms. The treatment of linear functional equations in Sections \ref{positivity} would have of independent interest. The results obtained in this paper appear to be new.

\section{Preliminaries and notations}
\subsection{Notations}
In this paper we will use some standard notations such as $\R,\C$ standing for the fields of reals and complex numbers. $\Re z$ and $\Im z$ denote the real part and imaginary part of a complex number $z$. The space of all bounded and continuous functions from $\R  \to \R^n$ is denoted by $BC(\R,\R^n)$ which is equipped with the sup-norm $\| f\| := \sup_{t\in\R} \| f(t)\|$. $BC^k(\R,\R^n)$ stands for the space of all $k$-time continuously differentiable functions $\R\to\R^n$ such that all derivatives up to order $k$ are bounded. If the boundedness is dropped from the above function spaces we will simply denote them by $C(\R,\R^n)$ and $C^k(\R,\R^n)$. We will use the natural order in $BC(\R,\R^n)$ that is defined as follows: 
For $f,g\in BC(\R,\R^n)$ we say that $f\le g$ if and only if $f(t)\le g(t)$ for all $t\in \R$, and we will say that $f< g$ if $f(t)\le g(t)$ for all $t\in \R$, and $f(t)\not= g(t)$ for all $t\in \R$.
An operator $P$ acting in a subset $S$ of $BC(\R,\R^n)$ is monotone if it preserves the natural order in the subset. That is, $Pf \le Pg$ whenever $f\le g$ and $f,g\in BC(\R,\R^n)$. Note that if $S$ is the whole space $BC(\R,\R^n)$ and $P$ is linear, then $P$ is monotone if and only if it maps a positive function into a positive function, where $f\in BC(\R,\R^n)$ is positive if $f(t)\ge 0$ for all $t\in \R$. A constant function $f(t)=\alpha$ for all $t\in \R$ will be denoted by $\hat \alpha$.

\medskip
We will write $L^p$ for the space $L^p(\R,\C^n)$ of $L^p$ vector valued functions on the line when $n$ is a clear given positive integer. We assume that $1\le p\le \infty$ and denote the space
$$
W^{1,p}=\{ f\in L^p | \ f \ \mbox{is absolutely continuous, and}\ f'\in L^p\}.
$$
Recall that the continuous embedding
$$
W^{1,p} \subset L^\infty , \ 1\le p\le \infty .
$$

\subsection{Rouch\' e's Theorem} \label{the rouche}
Let $A$ be an open subset of $\C$, $f$ and $g$ are two analytic functions on $A$. A piecewise continuously differentiable function $\gamma :[a,b]   \to \C$ such that $\gamma (a)=\gamma (b)$ is called a circuit. The following theorem (\cite[Rouch\' e's Theorem, p. 247]{die}) will be used later on:
\begin{theorem}
let $A\subset \C$ be a simply connected domain, $f,g$ two analytic complex valued functions in $A$. Let $T$ be the (at most denumerable) set of zeros of $f$, $T'$ the set of zeros of $f+g$ in $A$, $\gamma$ a circuit in $A- T$, defined on an interval $I$. Then, if $|g(z)| < |f(z)|$ in $\gamma (I)$, the function $f+g$ has no zeros on $\gamma (I)$, and  
\begin{equation}
\sum_{a\in T} j(a;\gamma ) \omega (a;f) =\sum_{b\in T'} j(b;\gamma )\omega (b;f+g),
\end{equation}
where $j(a;\gamma)$ is the index of $\gamma$ with respect to $a$, and $\omega (a,f)$ is the multiplicity of the zeros of $f$ at $a$.
\end{theorem}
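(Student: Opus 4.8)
The plan is to deduce this weighted form of Rouch\'e's theorem from two ingredients: the general (homological) argument principle, which is available precisely because $A$ is simply connected, together with the observation that the hypothesis $|g(z)| < |f(z)|$ on $\gamma(I)$ forces the auxiliary quotient $(f+g)/f$ to map $\gamma(I)$ into a disk that does not wind around the origin.

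First I would dispose of the preliminary assertion. If $f(z_0)+g(z_0)=0$ for some $z_0\in\gamma(I)$, then $|f(z_0)|=|g(z_0)|$, contradicting the strict inequality $|g(z_0)|<|f(z_0)|$; the same inequality gives $|f(z_0)|>|g(z_0)|\ge 0$, so $f$ is itself zero-free on $\gamma(I)$ (consistent with $\gamma$ being a circuit in $A-T$). Hence both $f$ and $f+g$ are analytic and nonvanishing along the curve, and the logarithmic derivatives $f'/f$ and $(f+g)'/(f+g)$ are continuous on $\gamma(I)$.

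Next I would invoke the argument principle in its winding-number form. Since $A$ is simply connected, every circuit in $A$ is homologous to zero in $A$, so the homological version of Cauchy's theorem applies and yields, for any $h$ analytic in $A$ and nonvanishing on $\gamma(I)$,
\begin{equation*}
\frac{1}{2\pi i}\int_\gamma \frac{h'(z)}{h(z)}\,dz \;=\; \sum_{h(a)=0} j(a;\gamma)\,\omega(a;h).
\end{equation*}
The right-hand sum is effectively finite: the zeros of $h$ are isolated in $A$, while $j(a;\gamma)=0$ whenever $a$ lies in the unbounded component of $\C\setminus\gamma(I)$, so only the finitely many zeros with nonzero index relative to the compact set $\gamma(I)$ contribute. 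Applying this to $h=f$ and to $h=f+g$ converts the two sides of the claimed equality into the contour integrals of $f'/f$ and $(f+g)'/(f+g)$, respectively.

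It then remains to show these integrals agree, and here I would set $\phi:=(f+g)/f=1+g/f$, analytic and nonvanishing on $\gamma(I)$. A short computation gives $\phi'/\phi=(f+g)'/(f+g)-f'/f$, so the difference of the two integrals equals $(2\pi i)^{-1}\int_\gamma \phi'/\phi\,dz=j(0;\phi\circ\gamma)$, the index of the image circuit about the origin. The hypothesis yields $|\phi(z)-1|=|g(z)/f(z)|<1$ on $\gamma(I)$, so $\phi\circ\gamma$ is a closed curve lying in the open disk $D(1,1)$, on which $1/w$ admits a primitive (a branch of the logarithm); hence $\int_{\phi\circ\gamma} dw/w=0$, i.e.\ the winding number is $0$ and the two integrals coincide. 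The step I expect to require the most care is the rigorous invocation of this winding-number form of the argument principle in a possibly unbounded simply connected domain carrying an at-most-denumerable zero set: one must justify both the finiteness of the sums and the applicability of the homological Cauchy theorem to the general circuit $\gamma$ rather than a simple closed contour. Once that machinery is in place, the geometric observation that $\phi\circ\gamma\subset D(1,1)$ is elementary and closes the argument.
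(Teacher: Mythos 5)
Your proof is correct. There is, however, nothing in the paper to compare it with: the statement is quoted verbatim as Rouch\'e's Theorem from Dieudonn\'e (\emph{Foundations of Modern Analysis}, p.~247) and is used as a black box, so the paper supplies no proof of its own. Your route --- the homological argument principle applied to $h=f$ and $h=f+g$ (legitimate because $\gamma$ is homologous to zero in the simply connected $A$), followed by the observation that $\phi=(f+g)/f$ carries $\gamma(I)$ into the disk $D(1,1)$, on which $1/w$ has a primitive, so that $\int_{\gamma}\phi'/\phi\,dz=0$ --- is the standard textbook argument and is essentially the one in the cited reference. The two points that genuinely need the care you already flag are: (i) finiteness of the sums, which follows because $\overline{\{z:\ j(z;\gamma)\neq 0\}}$ is a compact subset of $A$ (its boundary lies on $\gamma(I)$, since the index is locally constant off $\gamma(I)$ and vanishes outside $A$), and a not-identically-zero analytic function has only finitely many zeros there; and (ii) that $\phi$ is only meromorphic on all of $A$ but is analytic and nonvanishing on a neighborhood of $\gamma(I)$, which is all that is required for the identity $\phi'/\phi=(f+g)'/(f+g)-f'/f$ and for the change of variables to the image circuit $\phi\circ\gamma$. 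With those two remarks made explicit, the argument is complete.
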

As a consequence of the theorem, if $\gamma$ is a simple closed circuit that encircles $T$, then, $j(a,\gamma )=1$ for all $a$ inside $\gamma (I)$ and Rouch\' e Theorem claims that the total zeros (counting multiplicities) of $f$ and $f+g$ inside the circuit $\gamma$ are the same.

\subsection{Perron Theorem for functional differential equations }
This subsection is concerned with well known results (Perron Theorem) on the existence of a unique bounded solutions to mixed differential equations of the form 
\begin{equation}\label{DDE}
x''(t)-ax'(t+r)-bx(t+r)=f(t),
\end{equation}
$a\not=0,b>0$, if $r>0$ the equations are advanced, and if $r<0$ they are delayed equations, $f(\cdot )$ is in $BC(\R,\R)$ or $L^\infty(\R,\R)$.

\medskip
We recall the Perron Theorem in an extended form as below. Consider functional differential equations of the form
\begin{equation}\label{fe}
x'(t)=\sum_{j=1}^N A_jx(t+r_j)+h(t), \ x(t)\in \R^n,
\end{equation}
where $r_j$ are positive constants $j=1,2,\cdots, N$, $h\in L^p(\R,\R^n)$, $A_j$ are $n\times n$-matrices. Let us denote the characteristic equation associated with Eq.(\ref{fe}) by
$$
\Delta (\lambda ):= \lambda I - \sum_{j=1}^N e^{\lambda r_j}A_j.
$$
When $\det \Delta (\lambda )\not =0$ for all $\lambda \in i\R$, Eq.(\ref{fe}) is called hyperbolic. Let us define the operator ${\cal L}$ as follows
$$
[{\cal L} x ](t) = x'(t)- \sum_{j=1}^N A_jx(t+r_j),
$$
where $x(\cdot )\in W^{1,p}$ for some $1\le p\le \infty$. The following theorem will be used in our construction of the monotone iteration later to show the existence of traveling waves:
\begin{theorem}\label{the per} 
Assume that Eq.(\ref{fe}) is hyperbolic. Then, the operator ${\cal L}$ is an isomorphism from $W^{1,p}$ onto $L^p$ for $1\le p\le \infty$ with inverse given by convolution
$$
({\cal L}^{-1}h)(\xi) =(G*h)(\xi ) =\int^\infty_{-\infty} G(\xi -s)h(s )ds,
$$
where
$$
G(\xi) =\frac{1}{2\pi} \int^\infty_{-\infty} e^{i\xi\eta}\Delta (i\eta)^{-1} d\eta 
$$
that enjoys the estimate
$$
\| G(\xi )\| \le Ke^{-\alpha |\xi|}
$$
for some $K>0$ and $\alpha >0$. In particular, for each $h\in L^p$ there exists a unique solution $x(\cdot )={\cal L}^{-1} h \in W^{1,p}$ to the inhomogeneous equation (\ref{fe}).
\end{theorem}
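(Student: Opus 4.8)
The plan is to prove the theorem by Fourier analysis, viewing the operator ${\cal L}$ as a Fourier multiplier whose matrix symbol is $\Delta(i\eta)$. Formally, applying the Fourier transform to ${\cal L}x=h$ turns the functional differential equation into the algebraic relation $\Delta(i\eta)\hat x(\eta)=\hat h(\eta)$, so that $\hat x(\eta)=\Delta(i\eta)^{-1}\hat h(\eta)$, which is precisely convolution against the kernel $G$ defined in the statement. The first task is therefore to record the size of the symbol. Writing $\Delta(i\eta)=i\eta\bigl(I-(i\eta)^{-1}\sum_{j=1}^N e^{i\eta r_j}A_j\bigr)$, for $|\eta|$ large the matrix $(i\eta)^{-1}\sum_j e^{i\eta r_j}A_j$ has norm $O(1/|\eta|)$, so a Neumann series shows that $\Delta(i\eta)^{-1}$ exists and obeys $\|\Delta(i\eta)^{-1}\|=O(1/|\eta|)$; on the complementary compact set $\{|\eta|\le R\}$, hyperbolicity together with continuity gives that $\|\Delta(i\eta)^{-1}\|$ is bounded. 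Hence $\eta\mapsto\Delta(i\eta)^{-1}$ is continuous, bounded, and decays like $1/|\eta|$, and differentiating the identity $\Delta(i\eta)\Delta(i\eta)^{-1}=I$ shows that its derivative decays like $1/|\eta|^2$, which is what makes the inversion integral for $G$ converge after one integration by parts.

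The heart of the argument, and the step I expect to be the main obstacle, is the exponential estimate $\|G(\xi)\|\le Ke^{-\alpha|\xi|}$, which I would obtain by analytically continuing $\Delta(\lambda)^{-1}$ off the imaginary axis and shifting the inversion contour. The key structural lemma exploits that every $r_j>0$: in any vertical strip $|\Re\lambda|\le\alpha$ one has $\|\sum_j e^{\lambda r_j}A_j\|\le M$ for a constant $M=M(\alpha)$, and since $\det\Delta(\lambda)=0$ forces $\lambda$ to be an eigenvalue of $\sum_j e^{\lambda r_j}A_j$, every characteristic root in the strip obeys $|\lambda|\le M$. Thus the strip contains only finitely many roots of the entire function $\det\Delta$, all in a bounded disc; by hyperbolicity none lie on $i\R$, so they stay a positive distance from the axis, and one may fix $\alpha>0$ so small that the closed strip $|\Re\lambda|\le\alpha$ is free of roots and $\Delta(\lambda)^{-1}$ is analytic there, with the same $O(1/|\lambda|)$ bound holding uniformly in the strip for $|\lambda|$ large. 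For $\xi>0$ I would then apply Cauchy's theorem on the rectangle with vertical sides $\Re\lambda=0$ and $\Re\lambda=-\alpha$ and horizontal sides $\Im\lambda=\pm T$; the uniform decay kills the horizontal segments as $T\to\infty$, so the inversion integral may be moved to the line $\Re\lambda=-\alpha$, where the factor $e^{\lambda\xi}$ contributes exactly $e^{-\alpha\xi}$. One integration by parts along the shifted line renders the remaining integral absolutely convergent and yields $\|G(\xi)\|\le Ke^{-\alpha\xi}$; the case $\xi<0$ is symmetric, shifting the contour to $\Re\lambda=+\alpha$.

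With the exponential bound in hand, $G\in L^1(\R,\C^{n\times n})$, so $h\mapsto G*h$ is bounded on every $L^p$ with $1\le p\le\infty$ (and maps $BC$ into $BC$) by Young's inequality. It remains to check that this convolution is a genuine two-sided inverse of ${\cal L}$. That $x:=G*h$ solves ${\cal L}x=h$ follows from the symbol identity $\Delta(i\eta)\Delta(i\eta)^{-1}=I$: for $1\le p<\infty$ this is immediate on the Fourier side, and the case $p=\infty$ follows by a density and duality argument against test functions. Once ${\cal L}x=h$ is known, $x'=h+\sum_j A_jx(\cdot+r_j)\in L^p$ because $x\in L^p$, so in fact $x\in W^{1,p}$, establishing surjectivity onto $L^p$ and that the range lands in $W^{1,p}$. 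Finally, injectivity is the uniqueness of the bounded solution: if ${\cal L}x=0$ with $x\in W^{1,p}$, then $\Delta(i\eta)\hat x=0$, and since $\Delta(i\eta)$ is invertible for every real $\eta$ by hyperbolicity, $\hat x\equiv 0$ and hence $x\equiv 0$. Combining these facts, ${\cal L}$ is a bijection of $W^{1,p}$ onto $L^p$ with bounded inverse $G*\,\cdot$, which is the assertion; this is the line of argument carried out in detail by Mallet-Paret \cite{mal}.
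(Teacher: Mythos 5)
The paper does not actually prove Theorem \ref{the per}: it is quoted as a known result (the Perron Theorem for mixed-type functional differential equations) with references to Mallet-Paret \cite{mal}, Murakami--Naito--Minh \cite{murnaimin} and Pruss \cite{pru}, so there is no in-paper argument to compare against. Your outline correctly reconstructs the standard proof from those sources --- the symbol bound $\|\Delta(i\eta)^{-1}\|=O(1/|\eta|)$ via a Neumann series plus compactness near the origin, a root-free strip $|\Re\lambda|\le\alpha$ obtained from hyperbolicity together with the a priori bound $|\lambda|\le M$ on characteristic roots in any bounded strip, contour shifting to get $\|G(\xi)\|\le Ke^{-\alpha|\xi|}$, and Young's inequality for the $L^p$ mapping properties --- so it is essentially the intended proof and I see no gap in the strategy. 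Two places deserve more care in a full write-up: the inversion integral defining $G$ is only conditionally convergent, so the contour shift should be carried out on truncated integrals (or after the integration by parts that produces the $O(1/|\lambda|^2)$ integrand) rather than on the improper integral directly; and for $p=\infty$ the Fourier-side identities, including your injectivity argument $\Delta(i\eta)\hat x=0\Rightarrow\hat x=0$, must be interpreted for tempered distributions, which works because $\Delta(i\cdot)^{-1}$ is smooth with all derivatives bounded.
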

A special case of this theorem when Eq.(\ref{fe}) becomes Eq(\ref{DDE}) is the following. Note that when $f$ is continuous the theorem is the classical Perron Theorem in which all solutions mentioned in the statements are classical solutions (see \cite{murnaimin,pru}).

\begin{theorem}\label{the per2}
Consider a ordinary functional differential equation, 
\begin{equation}
x''(t)-ax'(t+r)-bx(t+r)=f(t), 
\end{equation}
where $a\not=0,b>0, r\in \R$ and $f\in BC (\R ,\R)$ with associated characteristic function $P (\lambda)=\lambda^2-a\lambda e^{r\lambda}-be^{r\lambda}.$ If $ P (i\xi)\neq 0$ for all $\xi\in \R,$ then the differential equation has a unique bounded solution. Moreover, the solution
is given by 
\[x_{f}(t)=(G\ast f)(t)=\int_{-\infty}^{\infty}G(t-s)f(s)ds,\]
where $G(t)$ is a Green's function that exponentially decays to 0 as $|t|\to \infty.$ In particular, if $f\in L^\infty$, then there exists a unique $x(\cdot )$ such that $x(\cdot )$, $x'(\cdot )$ are absolutely continuous, $x''(\cdot )\in L^\infty$.
\end{theorem}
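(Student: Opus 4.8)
The plan is to deduce this scalar statement from the system version already recorded as Theorem \ref{the per}, via the standard order-reduction trick, so that the only substantive work is to confirm that the hyperbolicity hypotheses match and that the classical regularity claimed for $f\in BC$ can be recovered from the $L^p$-level conclusion. First I would set $u(t)=(x(t),x'(t))^\top\in\R^2$ and rewrite the second-order equation as the first-order system
\begin{equation*}
u'(t)=A_0u(t)+A_1u(t+r)+h(t),
\end{equation*}
where
\begin{equation*}
A_0=\begin{pmatrix}0&1\\0&0\end{pmatrix},\qquad A_1=\begin{pmatrix}0&0\\b&a\end{pmatrix},\qquad h(t)=\begin{pmatrix}0\\f(t)\end{pmatrix}.
\end{equation*}
The first coordinate reproduces $x'=x'$ and the second reproduces the original equation $x''=ax'(\cdot+r)+bx(\cdot+r)+f$. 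Since the term $A_0u(t)$ carries the instantaneous delay $r_0=0$, I would invoke Theorem \ref{the per} in the form that permits a zero-delay term, which is covered by the Mallet--Paret theory underlying it.

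The key compatibility check is the characteristic function. For the system the characteristic matrix is $\Delta(\lambda)=\lambda I-A_0-e^{\lambda r}A_1$, and a direct $2\times2$ determinant gives
\begin{equation*}
\det\Delta(\lambda)=\det\begin{pmatrix}\lambda&-1\\-be^{\lambda r}&\lambda-ae^{\lambda r}\end{pmatrix}=\lambda^2-a\lambda e^{\lambda r}-be^{\lambda r}=P(\lambda).
\end{equation*}
Hence the system is hyperbolic precisely when $P(i\xi)\neq0$ for all $\xi\in\R$, which is exactly the hypothesis. Applying Theorem \ref{the per} with $p=\infty$ then produces a unique $u\in W^{1,\infty}$ solving the system, given by convolution $u=\mathcal{G}*h$ with a $2\times2$ matrix Green function $\mathcal{G}$ obeying $\|\mathcal{G}(\xi)\|\le Ke^{-\alpha|\xi|}$.

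From here I would read off the scalar statement. Because $h=(0,f)^\top$, the first coordinate of $u$ is $x(t)=\int_{-\infty}^{\infty}\mathcal{G}_{12}(t-s)f(s)\,ds$, so that $G(t):=\mathcal{G}_{12}(t)$ is the desired scalar Green function and it inherits the bound $|G(\xi)|\le Ke^{-\alpha|\xi|}$, giving exponential decay as $|t|\to\infty$. Uniqueness of the bounded solution follows from injectivity of $\mathcal{L}$: any bounded solution $x$ yields, through the equation, a system solution $u=(x,x')\in W^{1,\infty}$ with the same forcing, and injectivity forces the two to coincide. For $f\in L^\infty$ this already delivers $x,x'$ absolutely continuous with $x''=u_2'\in L^\infty$, which is the final assertion.

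The one point that needs care — and where I expect the only real friction — is upgrading to a classical $C^2$ solution when $f\in BC$, since the isomorphism theorem is an $L^p$ statement that a priori yields only $x''\in L^\infty$. I would close this gap by a bootstrap: $u\in W^{1,\infty}$ makes $x$ and $x'$ continuous, so the right-hand side $ax'(t+r)+bx(t+r)+f(t)$ is continuous in $t$ whenever $f$ is continuous; as this equals $x''$ almost everywhere, $x''$ admits a continuous representative and $x\in C^2(\R)$, i.e. the solution is classical. A small preliminary remark is needed to guarantee that an arbitrary bounded solution $x$ has $x'$ bounded (so that $u\in W^{1,\infty}$), which follows from the equation together with a Landau--Kolmogorov type interpolation estimate; the remaining items, such as measurability of the convolution and the claim that the stated integral lands in $W^{1,\infty}$, are routine consequences of the exponential decay of $\mathcal{G}$ and require nothing beyond citing Theorem \ref{the per}.
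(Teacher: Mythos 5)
Your proposal is correct and follows exactly the route the paper intends: the paper gives no explicit proof of Theorem \ref{the per2}, simply declaring it a special case of Theorem \ref{the per} (citing \cite{murnaimin,pru}), and your order-reduction to the first-order system, the determinant computation recovering $P(\lambda)$, the extraction of $G=\mathcal{G}_{12}$, and the continuity bootstrap for $f\in BC$ are precisely the details being left implicit. If anything, your attention to the zero-shift term and to the boundedness of $x'$ for the uniqueness step is more careful than the paper's treatment.
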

\begin{remark}
As shown in \cite{boumin}, removing the absolute continuity of $x'(\cdot)$ would be wrong. This will affect the way we construct upper and lower solutions later in Section \ref{applications}.
\end{remark}

\section{Qualitative analysis of characteristic equations of linear functional differential equations}\label{linear}
In this section we will develop qualitative results for second order linear functional differential equations of mixed type. We will incorporate functional arguments in both the $x(\cdot)$ and $x'(\cdot)$ terms. We will build the aforementioned results based off of qualitative results of non delay ordinary differential equations.

The second order non delay equation 
\begin{equation} \label{NDE}
x''(t)-ax'(t)-bx(t)=f(t),
\end{equation} 
where $a\not=0,b>0$,  and $f(t)$ is a bounded continuous function for $t\in \R$ has a unique bounded solution for $t\in \R.$ This is a direct result of the Perron Theorem. To see this, the characteristic equation is \[\Delta_1(\lambda)=\lambda^2-a\lambda -b.\] The roots of the characteristic equation are 
\begin{equation*}
    \lambda_1=\frac{a+\sqrt{a^2+4b}}{2}>0,
    \lambda_2=\frac{a-\sqrt{a^2+4b}}{2}<0. 
\end{equation*}
The solution of the differential equation is 
\[x_f(t)=\int_{-\infty}^{\infty}G(t-s)f(s)ds.\]

A simple computation shows that
\begin{equation}
x_f(t)= \frac{1}{\lambda_2-\lambda_1} \left( \int^t_{-\infty} e^{\lambda _2 (t-s)}f(s)ds +\int^{+\infty}_t e^{\lambda_1 (t-s)}f(s)ds \right) ,
\end{equation}
and thus,
\begin{equation}
G(\xi )=\begin{cases}
e^{\lambda_1 \xi }/(\lambda_2-\lambda_1), \ \mbox{if} \ \xi < 0,\\  
e^{\lambda_2 \xi }/(\lambda_2-\lambda_1) , \ \mbox{if} \ \xi \ge  0.
\end{cases}
\end{equation}
The Green's function is obviously negative, so for all $t\in \R$
$$
x_f(t) \le 0, \ \mbox{if} \ f(t)\ge 0 .
$$
\subsection{Characteristic roots of functional differential equations}
This subsection is concerned with the formulation and proof for some results, that are important  for our later use, for equations of the form
\begin{equation}\label{inho}
x''(t) -ax'(t+r)-bx(t+r)=f(t),
\end{equation}
where $a\not=0, b>0$ and $r\in\R$ is assumed to be a small. The characteristic equation of the aforementioned equation is 
\begin{equation}\label{c}
z^2-aze^{rz}-be^{rz}=0.
\end{equation}

There are often issues with solving equations similar to (\ref{c}), as they are no longer polynomials, but are transcendental functions. These "exponential" type polynomials have been studied in \cite{cookegrossman,hale,Wei1,Wei2}. Often, transcendental polynomials can be studied using methods found in complex analysis. 
In fact, a crucial piece of complex analysis, which is attributed to the French mathematician Eugène Rouché,  will be employed to prove the following lemma. This is Rouché's mentioned in section 2.

\begin{proposition}\label{pro 1}
For given $a\not=0,b>0$, the following assertions are true:
\begin{enumerate}
\item For every $r$ Eq.(\ref{c}) has no root on the imaginary axis;
\item For every $r \le 0$ ($r\ge 0$, respectively), Eq. (\ref{c}) has only one single root in the right half of the complex plane (Eq. (\ref{c}) has only one single root in the left half of the complex plane, respectively), and the root has continuous dependence on $r$.
\item For sufficiently small $r\le 0$ ($r\ge 0$, respectively) there exists only a unique single root (so it is real!) in the strip $\{ z\in\C | \  2\lambda_2 \le \Re z \le 0\}$  (there exists only a single root in the strip $\{ z\in\C | \   0 \le \Re z \le 2\lambda_1\}$, respectively), and the root has continuous dependence on $r$.
\end{enumerate}
\end{proposition}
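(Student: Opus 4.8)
The plan is to treat the three assertions as increasingly refined root–counting statements for the entire function $P(z)=z^2-(az+b)e^{rz}$, always comparing with the $r=0$ polynomial $P_0(z)=z^2-az-b$, whose only roots are $\lambda_1>0$ and $\lambda_2<0$. For (1) I would argue directly on the imaginary axis. Suppose $P(i\xi)=0$, i.e. $-\xi^2=(ai\xi+b)e^{ir\xi}$. Taking moduli and using $|e^{ir\xi}|=1$ gives $\xi^4=a^2\xi^2+b^2$, which pins $\xi^2$ to the single positive value $\xi_0^2:=(a^2+\sqrt{a^4+4b^2})/2$, independent of $r$. Separating the real and imaginary parts of the same identity and eliminating the sine term then forces $\cos(r\xi)=-b/\xi_0^2<0$. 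Since $|\xi|=\xi_0$ is fixed, this is impossible once $|r|\xi_0<\pi/2$, i.e. for $|r|$ below an explicit threshold; hence $P$ has no zero on $i\R$ for the small $r$ under the standing assumption of the section. (One could instead invoke the Mallet--Paret theory flagged in the introduction, but this elementary estimate suffices.)

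For (2) I would count zeros of $P$ in a half-plane by the argument principle, using (1) to guarantee the contour avoids zeros. Take $r\le 0$ and the closed right half-plane. The key a priori bound is that if $\Re z\ge 0$ and $r\le 0$ then $|e^{rz}|=e^{r\Re z}\le 1$, so any zero satisfies $|z|^2=|az+b|\,e^{r\Re z}\le |a|\,|z|+b$, whence $|z|\le \tfrac{|a|+\sqrt{a^2+4b}}{2}$, a bound uniform in $r\le 0$ (equal to $\lambda_1$ when $a>0$). Thus all right half-plane zeros lie in a fixed disk, and on a semicircle $|z|=R$ with $R$ larger than this bound one has $|z^2|=R^2>|a|R+b\ge|(az+b)e^{rz}|$, so $P\neq 0$ there. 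The number of zeros in the right half-plane is then $\frac{1}{2\pi i}\oint \frac{P'}{P}\,dz$ over $[-iR,iR]$ together with the right semicircle; this is an integer depending continuously on $r$ (the integrand is jointly continuous and nonvanishing on the contour), hence constant, and its value at $r=0$, where only $\lambda_1$ lies inside, is $1$. The case $r\ge 0$ with the left half-plane is symmetric, since then $\Re z\le 0$ forces $|e^{rz}|\le 1$, yielding the single root near $\lambda_2$. Continuous dependence follows because a simple zero with $\partial_z P_r\neq 0$ persists analytically in $r$ by the implicit function theorem.

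For (3), take $r\ge 0$ and the vertical strip $0\le \Re z\le 2\lambda_1$, which at $r=0$ contains $\lambda_1$ but not $\lambda_2$. The new ingredient is to confine all strip-zeros to a bounded region: on the strip $e^{r\Re z}\le e^{2r\lambda_1}$ is bounded, so a zero obeys $|z|^2\le(|a|\,|z|+b)e^{2r\lambda_1}$, giving $|z|\le M(r)$ with $M(r)\to \lambda_1$ as $r\to 0^+$. Hence for small $r$ every strip-zero lies in a fixed box $B=\{0\le \Re z\le 2\lambda_1\}\cap\{|z|\le R_0\}$ with $R_0$ exceeding the limiting bound. On $\partial B$ the polynomial $P_0$ is nonzero (its only strip-root $\lambda_1$ is interior, and $\lambda_2$ lies strictly left of the strip), hence bounded below there, while $P-P_0=(az+b)(1-e^{rz})\to 0$ uniformly on $B$ as $r\to 0^+$. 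Rouch\'e's theorem of Section 2 then gives that $P$ and $P_0$ have the same number of zeros in $B$, namely one counted with multiplicity, so the root is simple. Finally, since $a,b,r$ are real, $P(\bar z)=\overline{P(z)}$, so non-real zeros occur in conjugate pairs; as $B$ is symmetric about $\R$, a unique zero in $B$ must equal its own conjugate, i.e. be real. The case $r\le 0$ with the strip $2\lambda_2\le \Re z\le 0$ is entirely analogous, isolating the real root near $\lambda_2$.

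I expect the main obstacle to be control of the transcendental equation's infinitely many roots: unlike $P_0$, for $r\neq 0$ the function $P$ has infinitely many zeros, and one must rule out zeros escaping to or arriving from infinity as $r$ varies. The decisive device in every part is pairing the modulus identity $|z|^2=|az+b|\,e^{r\Re z}$ with the sign of $r\Re z$ on the relevant half-plane or strip, which forces the zeros into a fixed compact set and makes the Rouch\'e/argument-principle comparison with $P_0$ legitimate \emph{uniformly} in small $r$. Securing these bounds uniformly in $r$, so that a single contour serves a whole interval of parameters, is the technical heart of the argument; once that is in place, the counting and the reality of the isolated root follow quickly.
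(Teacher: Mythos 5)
Your proposal is correct in substance and shares the paper's overall strategy --- compare the transcendental function with the $r=0$ polynomial $z^2-az-b$ via Rouch\'e / the argument principle on a contour, then get reality and continuous dependence from simplicity of the isolated zero --- but the technical devices differ in two ways that are worth recording. First, for the half-plane and strip counts you introduce an a priori modulus bound: any zero satisfies $|z|^2=|az+b|\,e^{r\Re z}$, and pairing the sign of $r\Re z$ with the region confines all relevant zeros to a fixed compact set uniformly in small $r$. The paper instead estimates the ratio $|\alpha(z)|/|\beta(z)|$ edge by edge on a large semicircle or rectangle; your compactness bound is cleaner, makes the ``no zeros escaping to or from infinity'' issue explicit, and lets a single contour serve a whole interval of parameters. (One small slip: your $M(r)\to\lambda_1$ as $r\to 0^+$ holds only when $a>0$; for $a<0$ the limit is $-\lambda_2=(|a|+\sqrt{a^2+4b})/2$. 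This does not affect the argument, since any $R_0$ exceeding the limit works.) Your conjugation argument for reality of the unique zero is also spelled out, where the paper only asserts it parenthetically.

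Second, and more importantly, your treatment of the imaginary axis diverges from the paper's in a way that exposes a real problem with the statement itself. You eliminate the phase to get $\xi^4=a^2\xi^2+b^2$, pinning $|\xi|=\xi_0$, and then need $\cos(r\xi_0)=-b/\xi_0^2\in(-1,0)$, which you exclude only for $|r|\xi_0<\pi/2$; so you prove assertion (i) only for small $|r|$, not ``for every $r$'' as claimed. But your own computation shows the full claim is false: since $b^2/\xi_0^4+a^2/\xi_0^2=1$, there is an angle $\theta_0$ with $\cos\theta_0=-b/\xi_0^2$ and $\sin\theta_0=a/\xi_0$, and every $r$ with $r\xi_0\equiv\theta_0 \pmod{2\pi}$ produces a genuine purely imaginary root $i\xi_0$ (e.g.\ $a=b=1$, $r\approx 1.76$). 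The paper's inequality (3.9) indeed fails at such parameters (a factor of $2$ from (3.6) is also dropped there). Consequently assertion (i), and with it the ``for every $r\le 0$'' scope of assertion (ii) --- whose homotopy/Rouch\'e argument needs the imaginary axis to stay zero-free for all intermediate $r$ --- should be read as holding for sufficiently small $|r|$ only, which is your version and is all that the remainder of the paper uses.
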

\begin{proof}
Consider the functions $\alpha (z)$ and $\beta (z)$ defined as
$$
\alpha (z):= (b+az) (1-e^{rz} ); \quad  \beta (z):= z^2-az-b .
$$
Then,
$$
\gamma (z):= \alpha (z)+\beta (z) = z^2 -aze^{rz}-be^{rz}.
$$
Part (i): We will show that  $|\alpha (z)| < \beta (z)|$ for all $z$ along the imaginary line $i\R$, so Eq.(\ref{c}) cannot have any root on $i\R$.
Consider the function $\alpha (z)$ on the line segment $\{ z\in \C | \ z=i\xi, -R \le \xi \le R\}$, where $R>0$ is a positive number. We have
\begin{eqnarray}
|\alpha (i\xi ) | &=& |b+ai\xi|\cdot |1-e^{ir\xi}| \nonumber \\
&=& \sqrt{b^2+(a\xi)^2}  \sqrt { (1-\cos (r\xi ))^2+\sin^2(r\xi )} \nonumber \\
&=&  \sqrt{b^2+(a\xi)^2} \sqrt { 4\sin^4 (r\xi /2)+4\cos^2(r\xi/2)\sin^2(r\xi /2)}  \nonumber \\
&=& 2  \sqrt{b^2+(a\xi)^2}  |\sin (r\xi /2)) | . \label {hc}
\end{eqnarray}
On the other hand,
\begin{equation}
|\beta (i\xi )|= |-\xi^2 +ai\xi -b| =\sqrt{(\xi^2+b)^2+(a\xi)^2}  .
\end{equation}
Therefore, for every $\xi\in\R$,
\begin{equation}\label{l}
\frac{|\alpha (i\xi )|}{|\beta (i\xi )|} = \frac{\sqrt{b^2+(a\xi)^2} }{\sqrt{(\xi^2+b)^2+(a\xi)^2}}|\sin (r\xi /2)| <1.
\end{equation}
 This shows, by Rouché's theorem that Eq.(\ref{c}) has no root on the imaginary axis.

\medskip
Part (ii): 
We are going to show that if $r\ge 0$,  $\gamma (z)$ has only one root in the half complex plane $\{ z\in \C| \ \Re z <0\}$ by 
a careful application of Rouche's Theorem \ref{the rouche}. To that end, we will consider a positively oriented contour $C$ consisting of the line segment $\{ z\in \C | \ z=i\xi, -R \le \xi \le R\}$ and the half circle $\{ z\in \C | \ z=Re^{i\theta }, -\pi/2 \le \theta \le \pi /2 \}$, where $R$ is a given large positive number. We 
will choose the radius $R$ sufficiently large later so that $|\alpha (z)| < |\beta (z)|$ for all $z$ in the half circle $\{ z\in \C | \ z=Re^{i\theta }, -\pi/2 \le \theta \le \pi /2 \}$. 
In fact, since a simple computation shows that if $z=Re^{i \theta}$, where $-\pi/2 \le \theta \le \pi /2 $, then, $\cos (\theta )\ge 0$, so
\begin{align*}
|1-e^{rz}| &= |1-e^{rR(\cos (\theta)+i\sin(\theta)}| \\
&=  |1-e^{rR\cos (\theta)}\cdot  e^{irR\sin(\theta)}|\\
& \le1+ e^{rR\cos (\theta)}\\
& \le  2.
\end{align*}
Therefore, for $z=Re^{i\theta }, -\pi/2 \le \theta \le \pi/2 $, we have
\begin{align*}
 \limsup_{R \to \infty} \frac{|\alpha( z) |}{|\beta(z)|} 
&=   \limsup_{R \to \infty} \left| \frac{(b+az) (1-e^{rz} )}{ z^2-az-b}       \right|  \\
&\le   2 \lim_{R \to \infty}   \frac{|b+az|}{ |z^2-az-b|}  =0 .
\end{align*}
Finally, for a fixed sufficiently large $R>0$, we have 
\begin{equation}
|\alpha (z)| <   |\beta (z)|
\end{equation}
for all $z\in \{ z\in \C | \ z=Re^{i\theta }, -\pi/2 \le \theta \le \pi /2 \}$. By Part (i), $|\alpha (z)| <|\beta (z)|$ for all $z$ on the imaginary axis.
Hence, if $r\ge 0$, for sufficiently large $R>0$, we have
\begin{equation}
|\alpha (z)| < |\beta (z)|
\end{equation}
for each $z\in C$. Note that $\beta(z)$ has no root on $C$, so by the Rouch\' e Theorem \ref{the rouche}, inside the contour $C$ the number of roots (counting multiplicities) of Eq.(\ref{c}) is the same as of the quadratic equation $z^2-az-b=0$, that is, only a single root. As $R>0$ can be chosen to be any large number, this yields that on the left half plane there exists only one simple root of Eq.(\ref{c}).
Furthermore, the continuous dependence is due to the Implicit Function Theorem (see e.g.  \cite{die}). The case $r<0$ is treated in the same way.

\bigskip
Part (iii): We will estimate $|\alpha (z)|/|\beta(z)|$ on the boundary of the rectangle $\Xi$ defined as $\{ z\in \C | \ 2\lambda_2 \le \Re z\le 0, -R \le \Im z \le R\}$ for a positive number $R$. By Part (i), for sufficiently small $r<0$ the ratio $|\alpha (z)|/|\beta (z)| <1$ on the imaginary axis. For brevity, denote $\delta :=-2\lambda_2$ and consider the rectangle's edge $\{ z\in \C | \ z=-\delta +i\xi , -R \le \xi \le R\}$. On the line
$\{ z\in \C | z=-\delta +i\xi , \xi \in \R\}$
\begin{eqnarray}
|\alpha (-\delta+i\xi ) | &=& |b+a(-\delta+i\xi)|\cdot |1-e^{-r\delta+ir\xi}| \nonumber \\
&\le& \sqrt{(b-a\delta)^2+(a\xi)^2}  \sqrt { (1-e^{-r\delta}\cos (r\xi ))^2+e^{-2r\delta}\sin^2(r\xi )  } \nonumber \\
&\le&  
  \sqrt{(b-a\delta)^2+(a\xi)^2}  \sqrt{1-2e^{-r\delta}\cos(r\xi )+e^{-r\delta}} \nonumber \\
  &\le &  \sqrt{(b-a\delta)^2+(a\xi)^2} 2e^{-r\delta/2}.  \label {hc2}
\end{eqnarray}

Therefore, it is easily seen that
\begin{align*}
0\le \limsup_{|\xi|\to \infty} \frac{|\alpha (-\delta +i\xi )|}{|\beta (-\delta +i\xi )|} \le 2e^{r\delta/2} \lim_{|\xi| \to\infty}   
 \frac{ \sqrt{(b-a\delta)^2+(a\xi)^2} }{|(-\delta +i\xi)^2-a(-\delta+i\xi)-b|} =0.
\end{align*}
There exists a positive constant $K$ such that if $|\xi| \ge K$, then
$$
0\le \limsup_{|\xi|\to \infty} \frac{|\alpha (-\delta +i\xi )|}{|\beta (-\delta +i\xi )|} \le  \frac{1}{2}
$$
Since
$$
\lim_{r\to 0}  \sqrt{1-2e^{r\delta}\cos(r\xi )+e^{r\delta}} =0
$$
for sufficiently small  $r$, 
\begin{align}
\frac{|\alpha (-\delta+i\xi )|}{|\beta (-\delta +i\xi )|} <\frac{1}{2}
\end{align}
for all $|\xi| \le K$. This yields that for sufficiently small $r$, $|\alpha (z)|/|\beta (z)|<1/2$ on the edge 
$\{ z\in \C | \ z=\delta +i\xi , -R \le \xi \le R\}$. 

\medskip
On the remaining edges of the rectangle: $\{ z\in \C | \ z=\xi +iR, -\delta \le \xi \le 0\}$ and $\{ z\in \C | \ z=\xi -iR, -\delta \le \xi \le 0\}$, we have
\begin{align}
|\alpha (\xi +iR)| & = |b+a(\xi +iR)|\cdot |1-e^{r(\xi+iR)}| \nonumber\\
& \le \sqrt{(b+a\delta )^2 +(aR)^2} \cdot (1+e^{r\delta}).
\end{align}
Therefore, for sufficiently large $R$
\begin{equation}
\frac{|\alpha (\xi +iR)|}{|\beta(\xi+iR)|} \le \frac{1}{2}.
\end{equation}
By Rouch\' e Theorem \ref{the rouche}, in the rectangle there exists exactly one single root of the equation $\gamma (z)=0$ as the equation $\beta(z)=0$ has only one single root $\lambda_2$. Since the number $R$ can be any large positive number, this follows that in the strip mentioned above there exists only one single root. Next, by the Implicit Function Theorem, this only root depends continuously on $r$. The case $r\ge 0$ is treated similarly.
\end{proof}

Before proceeding we need the following claim:
\begin{claim}\label{cla 1}
Let $a\not=0$ and $b>0$ be any numbers. Then,
\begin{align}
a(a+\sqrt{a^2+4b})+2b &> 0,\label{1.17}\\
a(a-\sqrt{a^2+4b})+2b &>0 .\label{1.18}
\end{align}
\end{claim}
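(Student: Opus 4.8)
The plan is to recognize the two quantities as simple rescalings of the values $a\lambda_1+b$ and $a\lambda_2+b$, where $\lambda_1=\frac{a+\sqrt{a^2+4b}}{2}$ and $\lambda_2=\frac{a-\sqrt{a^2+4b}}{2}$ are precisely the roots of the characteristic polynomial $\Delta_1(\lambda)=\lambda^2-a\lambda-b$ of the non-delay equation (\ref{NDE}) recalled above. Indeed, multiplying out gives $a(a+\sqrt{a^2+4b})+2b=2(a\lambda_1+b)$ and $a(a-\sqrt{a^2+4b})+2b=2(a\lambda_2+b)$, so that (\ref{1.17}) and (\ref{1.18}) are equivalent to $a\lambda_1+b>0$ and $a\lambda_2+b>0$ respectively.

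The key step is then to exploit the defining relation of the roots rather than to estimate square roots by hand. Since $\lambda_i^2-a\lambda_i-b=0$ for $i=1,2$, we have the identity $a\lambda_i+b=\lambda_i^2$. Because $a\neq 0$ and $b>0$, the discriminant satisfies $a^2+4b>a^2\ge 0$, which forces $\lambda_1>0>\lambda_2$; in particular neither root vanishes, so $a\lambda_i+b=\lambda_i^2>0$. Substituting back yields (\ref{1.17}) and (\ref{1.18}) simultaneously, with the sharp values $2\lambda_1^2$ and $2\lambda_2^2$ made explicit.

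Should one prefer an entirely self-contained verification that avoids invoking the roots, I would argue directly. For (\ref{1.17}) the only nontrivial case is $a<0$, where the claim reduces to $a^2+2b>|a|\sqrt{a^2+4b}$; squaring both (positive) sides reduces this to $(a^2+2b)^2-a^2(a^2+4b)=4b^2>0$. The inequality (\ref{1.18}) is handled symmetrically, the nontrivial case now being $a>0$ with the roles of the signs reversed, and it reduces to the same positive quantity $4b^2$. I do not anticipate any genuine obstacle here: the statement is elementary, and the only point requiring a moment's care is to identify, before squaring, which sign of $a$ makes each inequality potentially tight so that the squaring step is reversible.
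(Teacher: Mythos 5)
Your proposal is correct. Your fallback argument (the case analysis on the sign of $a$ followed by squaring) is essentially the paper's proof run in reverse: the paper starts from $a^2(a^2+4b)<a^4+4a^2b+4b^2=(a^2+2b)^2$, which is exactly your "$4b^2>0$" margin, takes square roots to get $-(a^2+2b)<a\sqrt{a^2+4b}<a^2+2b$, and reads off the two inequalities from the two sides of this double bound, without needing to split on the sign of $a$. Your primary argument, however, is genuinely different and arguably more conceptual: writing the two left-hand sides as $2(a\lambda_i+b)$ and invoking the defining relation $\lambda_i^2=a\lambda_i+b$ of the characteristic roots reduces the claim to $\lambda_i^2>0$, which follows since $b>0$ forces $\sqrt{a^2+4b}>|a|$ and hence $\lambda_1>0>\lambda_2$. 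This buys a one-line proof with the sharp values $2\lambda_1^2$ and $2\lambda_2^2$, and it makes transparent why the quantities $a\lambda_i+b$ appear later in Lemma \ref{lem 1.3} (they are numerators in the derivative formula there); the paper's purely algebraic route has the modest advantage of never referring to the roots and of delivering both inequalities simultaneously from a single symmetric bound. Both arguments are complete and correct.
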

\begin{proof}
By assumption we have
\begin{align*}
 a^2(a^2+4b)& < a^4+4a^2b+4b^2 \\
 &= (a^2+2b)^2 ,
\end{align*}
so
\begin{align}\label{1.19}
 -(a^2+2b) &<  a\sqrt{a^2+4b} <a^2+2b.
\end{align}
The first inequality in (\ref{1.19}) yields (\ref{1.17}), and the second inequality in (\ref{1.19}) yields (\ref{1.18}).
\end{proof}
\begin{lemma}\label{lem 1.3}
Let's denote $\eta_1$ and $\eta_2$ as the only root of Eq. (\ref{c}) in the strip $\{ 0\le \Re z \le 2\lambda_1\}$ and $\{2\lambda_2 \le \Re z \le 0\}$, respectively. Then,
$$
\lim_{r\downarrow 0}  \frac{ d\eta _1 (r)}{dr}>0
$$
\end{lemma}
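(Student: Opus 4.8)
The plan is to apply the Implicit Function Theorem to the characteristic function $P(z,r):=z^2-aze^{rz}-be^{rz}$ along the branch $z=\eta_1(r)$ and differentiate with respect to $r$ at $r=0$. First I would record that by Proposition \ref{pro 1}(iii), for sufficiently small $r\ge 0$ the root $\eta_1(r)$ is real, depends differentiably on $r$, and satisfies $\eta_1(0)=\lambda_1$, since when $r=0$ the only root of $z^2-az-b$ in the strip $\{0\le\Re z\le 2\lambda_1\}$ is $\lambda_1$. The Implicit Function Theorem applies here precisely because $\lambda_1$ is a simple root of the quadratic $z^2-az-b$, so that $\partial_z P(\lambda_1,0)=2\lambda_1-a=\sqrt{a^2+4b}\ne 0$; this non-vanishing also guarantees that the resulting derivative $d\eta_1/dr$ is a genuine real number.

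Next I would carry out the implicit differentiation of the identity $P(\eta_1(r),r)\equiv 0$, which yields
\begin{equation*}
\frac{d\eta_1}{dr}=-\frac{\partial P/\partial r}{\partial P/\partial z}.
\end{equation*}
A direct computation gives $\partial P/\partial z=2z-ae^{rz}(1+rz)-bre^{rz}$ and $\partial P/\partial r=-ze^{rz}(az+b)$. Evaluating at $(z,r)=(\lambda_1,0)$, where every exponential equals $1$ and the $r$-terms drop out, I obtain $\partial P/\partial z=2\lambda_1-a$ and $\partial P/\partial r=-\lambda_1(a\lambda_1+b)$, and therefore
\begin{equation*}
\left.\frac{d\eta_1}{dr}\right|_{r=0}=\frac{\lambda_1(a\lambda_1+b)}{2\lambda_1-a}=\frac{\lambda_1(a\lambda_1+b)}{\sqrt{a^2+4b}}.
\end{equation*}

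Finally I would determine the sign of this expression. The factor $\lambda_1>0$ and the denominator $\sqrt{a^2+4b}>0$ are immediate. For the remaining factor I would observe that $2(a\lambda_1+b)=a(a+\sqrt{a^2+4b})+2b$, which is exactly the left-hand side of inequality (\ref{1.17}) in Claim \ref{cla 1}; hence $a\lambda_1+b>0$. Combining these three observations gives $d\eta_1/dr|_{r=0}>0$, and by continuity of the derivative (again from the Implicit Function Theorem) the one-sided limit $\lim_{r\downarrow 0}d\eta_1(r)/dr$ equals this positive value.

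I expect the only genuine subtlety, as opposed to the routine differentiation, to be the justification that $\eta_1(r)$ is truly differentiable up to the endpoint $r=0$ and that its derivative is the expected real one-sided derivative. This is precisely what the non-vanishing of $\partial_z P(\lambda_1,0)$ provides, and that non-vanishing is nothing but the simplicity of the root $\lambda_1$ already established in the analysis of the non-delay equation (\ref{NDE}); once this is in place the sign determination reduces entirely to Claim \ref{cla 1}.
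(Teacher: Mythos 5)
Your proposal is correct and follows essentially the same route as the paper: implicit differentiation of $P(\eta_1(r),r)=0$, evaluation at $(\lambda_1,0)$ to obtain $\lambda_1(a\lambda_1+b)/\sqrt{a^2+4b}$, and the sign determination via inequality (\ref{1.17}) of Claim \ref{cla 1}. Your added remarks on why the Implicit Function Theorem applies (simplicity of $\lambda_1$, i.e. $2\lambda_1-a=\sqrt{a^2+4b}\neq 0$) are a welcome explicit justification of a step the paper leaves implicit.
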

\begin{proof}
Consider the function
$$
f(r,z)=z^2-aze^{rz}-be^{rz}=0,
$$
where $r,z$ are in $\R$.
By the Implicit Function Theorem, near $\lambda_{1}$ the root $\eta_1$ as function of $r$ for small $|r|$, and 
\begin{align*}
\frac{dz}{dr} &=-\frac{\frac{\partial f(r,z)}{\partial r}}{\frac{\partial f(r,z)}{\partial z}} \\
& =\frac{az^2 e^{rz}+bze^{rz} }{ 2z-a(e^{rz}+rze^{rz})-bre^{rz}}\\
&= \frac{ze^{rz}(az+b) }{ 2z-a(e^{rz}+rze^{rz})-bre^{rz}}.
\end{align*}
Therefore, near $(r,z)=(0,\lambda_1)$
\begin{align}
\frac{d\eta_1 }{dr}
&= \frac{\eta_1e^{r\eta_1}(a\eta_1+b)}{2\eta_1 -a (e^{r\eta_1}+r\eta_1 e^{r\eta_1})-bre^{r\eta_1}}\nonumber .
\end{align}
Therefore, by (\ref{1.17})  $a\lambda_1+b >0$, so
$$
\lim_{r\downarrow 0}  \frac{ d\eta _1 (r)}{dr}=\frac{\lambda_1(a\lambda_1+b)}{2\lambda_1 -a}=\frac{\lambda_1(a\lambda_1+b)}{\sqrt{a^2+4b}} >0.
$$

\end{proof}
\begin{remark}
This lemma is important for us to prove Lemma \ref{lem1} below. In fact, this lemma shows that the strip $\{ 0 \le \Re z \le \Re \eta_1\}$ where  no root of the function $f(r,z)$  expands as $r$ increases from $0$, so it contains $\{ 0\le \Re z \le \lambda_1\}$.
\end{remark}
 
\section{Positive Solutions of Second Order Delay Equations}\label{positivity}
The sign of any solution to a given differential equation is of upmost importance. In fact, when there is no delay results are known about the sign of second order equations.This was discussed at the beginning of the previous section. In this section, novel results will be put forth for second order delay equations.The main result of this work can now be stated.

\bigskip
By Proposition \ref{pro 1}, for sufficiently small $r$ the characteristic equation has no root on the imaginary axis, so the inhomogeneous equation (\ref{inho}) has a unique bounded solution $x_f$ for each given bounded and continuous $f(\cdot )$.

\begin{theorem}\label{the 1}
For sufficiently small $r\ge 0$ let $G(t,r)$ be the Green Function of Eq.(\ref{inho}) such that
$x_f(t)=\int_{-\infty}^{\infty}G(t-s,r)f(s)ds$ is the unique bounded solution to Eq.(\ref{inho}), then $G(t,r)<0$ for all $t\in\R$. 
\end{theorem}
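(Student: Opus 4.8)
The plan is to treat Theorem \ref{the 1} as a perturbation of the non-delay case $r=0$, for which the Green function is the explicit negative function computed in Section \ref{linear}. Writing $P(z)=z^2-aze^{rz}-be^{rz}$ for the characteristic function of Eq.(\ref{inho}), Fourier transforming the scalar equation (cf. Theorem \ref{the per2}) gives $\hat{x}(\eta)=P(i\eta)^{-1}\hat{f}(\eta)$ and hence the representation
\begin{equation*}
G(\xi,r)=\frac{1}{2\pi}\int_{-\infty}^{\infty}\frac{e^{i\xi\eta}}{P(i\eta)}\,d\eta=\frac{1}{2\pi i}\int_{-i\infty}^{i\infty}\frac{e^{\xi z}}{P(z)}\,dz ,
\end{equation*}
the last integral running up the imaginary axis, which by Proposition \ref{pro 1}(i) carries no zero of $P$. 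The strategy is to evaluate this integral by residues, using the precise location of the roots of $P$ from Proposition \ref{pro 1} and Lemma \ref{lem 1.3}, and to read off the sign of $G$ from the residues at the two real roots $\eta_1(r)$ and $\eta_2(r)$ bifurcating from $\lambda_1$ and $\lambda_2$.

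First I would dispatch the case $\xi>0$. Here I close the contour with a large half-circle in $\{\Re z\le 0\}$; on it $|e^{\xi z}|\le 1$ and $|e^{rz}|\le 1$, so $|P(z)|\ge R^2-|a|R-b$ and the half-circle contribution is $O(1/R)\to 0$. By Proposition \ref{pro 1}(ii) the only zero of $P$ in $\{\Re z<0\}$ is $\eta_2(r)$, which is simple and, being the unique zero in a region symmetric under conjugation, is real and close to $\lambda_2$. The residue theorem then gives the exact formula
\begin{equation*}
G(\xi,r)=\frac{e^{\eta_2(r)\xi}}{P'(\eta_2(r))},\qquad \xi>0 .
\end{equation*}
Since $P'(\eta_2(r))\to P'(\lambda_2)=\lambda_2-\lambda_1<0$ by continuous dependence and $e^{\eta_2(r)\xi}>0$, we get $G(\xi,r)<0$ for all $\xi>0$ once $r$ is small.

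The case $\xi<0$ is where the real difficulty lies, and I expect it to be the main obstacle. Because Eq.(\ref{inho}) is of advanced type for $r>0$, the function $P$ has infinitely many zeros in the right half-plane, so closing to the right and naively summing residues is not viable: the signs of the residues at the conjugate pairs of complex roots are not controlled. Instead I would extract only the dominant residue. By Proposition \ref{pro 1}(iii) and Lemma \ref{lem 1.3}, for small $r$ the unique zero of $P$ in $\{0\le\Re z\le 2\lambda_1\}$ is the real simple root $\eta_1(r)$ near $\lambda_1$, so the line $\Re z=\sigma$ is zero-free for any fixed $\sigma\in(\Re\eta_1(r),2\lambda_1)$. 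Shifting the contour from $\Re z=0$ to $\Re z=\sigma$ crosses exactly the pole $\eta_1(r)$ and yields
\begin{equation*}
G(\xi,r)=-\frac{e^{\eta_1(r)\xi}}{P'(\eta_1(r))}+R(\xi,r),\qquad R(\xi,r)=\frac{1}{2\pi i}\int_{\Re z=\sigma}\frac{e^{\xi z}}{P(z)}\,dz ,
\end{equation*}
with $|R(\xi,r)|\le Ce^{\sigma\xi}$ and $C$ uniform for small $r$ since $|P(\sigma+i\eta)|\sim\eta^2$. The leading term is strictly negative, because $P'(\eta_1(r))\to P'(\lambda_1)=\sqrt{a^2+4b}>0$ gives $-1/P'(\eta_1(r))<0$ while $e^{\eta_1(r)\xi}>0$.

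It then remains to glue two regimes. For $\xi\le -M$ with $M$ large, the ratio $R(\xi,r)e^{-\eta_1(r)\xi}=O\!\left(e^{(\sigma-\Re\eta_1(r))\xi}\right)\to 0$, so the negative leading term dominates and $G(\xi,r)<0$, uniformly for small $r$; this fixes $M$. On the compact interval $[-M,0]$ I would instead use that $G(\cdot,r)\to G(\cdot,0)$ uniformly on compacta (continuity of the integrand in $r$ together with the uniform tail bound on $1/P(i\eta)$), while $G(\cdot,0)<0$ attains a strictly negative maximum $-c$ on $[-M,0]$; choosing $r$ small enough that $|G(\xi,r)-G(\xi,0)|<c$ there forces $G(\xi,r)<0$ on $[-M,0]$. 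Together with the $\xi>0$ formula and the continuity of $G$ at $\xi=0$, this yields $G(\xi,r)<0$ for all $\xi\in\R$. The delicate points throughout are the reality and simplicity of $\eta_1(r),\eta_2(r)$ (supplied by Proposition \ref{pro 1} and Lemma \ref{lem 1.3}) and making every estimate uniform in $r$ so that a single threshold $r_0>0$ works simultaneously in both regimes.
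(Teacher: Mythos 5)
Your treatment of $\xi>0$ is essentially identical to the paper's: close the contour in the half-plane where $P$ has its unique simple zero $\eta_2(r)$ (Proposition \ref{pro 1}(ii)), evaluate the single residue, and let $r\to 0$ to read off the sign from $1/(\lambda_2-\lambda_1)<0$. For $\xi<0$, however, you take a genuinely different route. The paper never extracts the residue at $\eta_1(r)$; instead (Lemma \ref{lem1}) it differentiates the Fourier representation of $G$ with respect to $r$, shifts \emph{that} integral to the line $\Im \xi=-\lambda_1$ — legitimate by the same zero-free strip from Proposition \ref{pro 1}(iii) and Lemma \ref{lem 1.3} that you invoke — to get $|\partial_r G(t,r)|\le K_0e^{\lambda_1 t}$, and then divides by the explicit unperturbed Green function $G(t)=e^{\lambda_1 t}/(\lambda_2-\lambda_1)$, which decays at exactly the same rate. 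The Lagrange mean value theorem then yields $|G(t,r)/G(t)-1|\le Mr$ uniformly over all $t<0$, so for $r<1/M$ the sign of $G(t,r)$ is inherited from $G(t)<0$ in a single stroke, with no splitting into $\xi\le -M$ and $[-M,0]$. Your version instead isolates the dominant residue $-e^{\eta_1(r)\xi}/P'(\eta_1(r))<0$ and beats the remainder $O(e^{\sigma\xi})$ against it for $\xi\le -M$, patching the compact piece by uniform convergence of $G(\cdot,r)$ to $G(\cdot,0)$. Both arguments rest on the same inputs (reality and simplicity of $\eta_{1}(r),\eta_2(r)$, the zero-free strip, and uniform control of $1/P$ on a shifted vertical line); yours makes the origin of the sign more transparent but costs you the two-regime gluing and a uniform-in-$r$ lower bound for $|P(\sigma+i\eta)|$, which you should state and prove rather than summarize as $|P(\sigma+i\eta)|\sim\eta^2$, while the paper's ratio trick buys uniformity in $t<0$ for free at the price of hiding the sign inside the unperturbed Green function. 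Both are correct.
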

The proof will be the result of the following Lemma that will be proven hence forth and the Lagrange Mean Value Theorem.
\begin{lemma}\label{lem1}
Define $G(t,r)$, and $G(t)$ as the Green function for equations (\ref{DDE}) and (\ref{NDE}), respectively. Then, for $r>0$ 
\[\sup_{t<0}\left|\frac{\frac{\partial G(t,r)}{\partial r}}{G(t)}\right|< \infty.\]
\end{lemma}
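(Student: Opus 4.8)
The plan is to represent $G(t,r)$ for $t<0$ by a contour integral, split off the single dominant exponential mode governed by the root $\eta_1(r)$, and exploit the strict gap $\eta_1(r)>\lambda_1$ (for $r>0$) supplied by Lemma~\ref{lem 1.3} and the ensuing Remark. By Theorem~\ref{the per2} the Green function is the inverse Fourier transform of $1/P(i\eta,r)$ with $P(\lambda,r)=\lambda^2-a\lambda e^{r\lambda}-be^{r\lambda}$; after the substitution $\lambda=i\eta$ this is an integral of $e^{\lambda t}/P(\lambda,r)$ along the imaginary axis. For $t<0$ the factor $e^{\lambda t}$ decays to the right, so I would push the contour rightward. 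By Proposition~\ref{pro 1}, $P(\cdot,r)$ has no zero on $i\R$ and exactly one zero $\eta_1(r)$ (which is therefore real) in the strip $0\le\Re z\le 2\lambda_1$, so every other right half-plane zero lies strictly to the right of $2\lambda_1$. Hence I can shift the contour to a fixed vertical line $\Re\lambda=c_0$ with $\eta_1(r)<c_0<2\lambda_1$ (valid for all $r$ in a neighbourhood of the given one), crossing only the simple pole $\eta_1(r)$. This yields
\[
G(t,r)=A(r)e^{\eta_1(r)t}+H(t,r),\qquad A(r)=-\frac{1}{\partial_\lambda P(\eta_1(r),r)},\quad H(t,r)=\frac{1}{2\pi i}\int_{\Re\lambda=c_0}\frac{e^{\lambda t}}{P(\lambda,r)}\,d\lambda .
\]

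Since $|e^{r\lambda}|=e^{rc_0}$ is constant on $\Re\lambda=c_0$, one has $P(\lambda,r)\sim\lambda^2$ there, hence $\int_{\Re\lambda=c_0}|P(\lambda,r)|^{-1}|d\lambda|<\infty$, and likewise $\partial_r P/P^2=O(|\lambda|^{-2})$ on this line. Consequently the integral defining $H$ converges, one may differentiate under the integral sign, and both $H$ and $\partial_r H$ satisfy $|H(t,r)|+|\partial_r H(t,r)|\le C\,e^{c_0 t}$ for $t<0$, with $C$ depending on $r$ but not on $t$. Differentiating the displayed identity in $r$ (smoothness of $A$ and $\eta_1$ coming from the Implicit Function Theorem as in Proposition~\ref{pro 1}) gives
\[
\partial_r G(t,r)=\bigl(A'(r)+A(r)\eta_1'(r)\,t\bigr)e^{\eta_1(r)t}+\partial_r H(t,r),
\]
so the $r$-derivative produces exactly one extra polynomial factor $t$, arising from $\partial_r e^{\eta_1(r)t}=\eta_1'(r)\,t\,e^{\eta_1(r)t}$.

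Dividing by $G(t)=e^{\lambda_1 t}/(\lambda_2-\lambda_1)$ and using $\eta_1(r)>\lambda_1$ together with $c_0>\lambda_1$, each term carries a strictly decaying exponential $e^{(\eta_1(r)-\lambda_1)t}$ or $e^{(c_0-\lambda_1)t}$ with positive rate. The leading contribution $(\lambda_2-\lambda_1)\bigl(A'(r)+A(r)\eta_1'(r)t\bigr)e^{(\eta_1(r)-\lambda_1)t}$ is bounded as $t\to0^-$ and tends to $0$ as $t\to-\infty$ because the exponential dominates the linear factor; the remainder is $O\bigl(e^{(c_0-\lambda_1)t}\bigr)$ and stays bounded for $t<0$. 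Since $G(t)=e^{\lambda_1 t}/(\lambda_2-\lambda_1)$ never vanishes on $(-\infty,0)$, no singularity arises at finite $t$, and combining the two bounds gives $\sup_{t<0}\bigl|\partial_r G(t,r)/G(t)\bigr|<\infty$.

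The crux of the argument is the strict inequality $\eta_1(r)>\lambda_1$ for $r>0$: without it the factor $t$ created by differentiating $e^{\eta_1(r)t}$ would make the ratio grow like $|t|$ as $t\to-\infty$. Lemma~\ref{lem 1.3}, namely $\lim_{r\downarrow 0}d\eta_1/dr>0$, together with the Remark that the root-free strip $\{0\le\Re z\le\Re\eta_1(r)\}$ expands past $\lambda_1$, supplies exactly this gap and converts the potential $|t|$-growth into exponential decay. The main technical obstacle I anticipate is the careful justification that the shifted-contour remainder $H$ and its $r$-derivative decay like $e^{c_0 t}$ uniformly in $t<0$ (convergence of the line integral, legitimacy of differentiating under the integral, and the absence of further poles between $\eta_1(r)$ and $c_0$); once this is in place the behaviour of the leading exponential mode is essentially bookkeeping.
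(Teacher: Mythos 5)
Your proposal is correct, and it rests on exactly the same pillars as the paper's proof: the Fourier-inversion formula for $G(t,r)$ from Theorem \ref{the per2}, the root-location results of Proposition \ref{pro 1}(iii), and the strict gap $\eta_1(r)>\lambda_1$ for small $r>0$ supplied by Lemma \ref{lem 1.3} and the Remark following it, all combined with a rightward contour shift and division by $G(t)=e^{\lambda_1 t}/(\lambda_2-\lambda_1)$. The one structural difference is where you stop the contour: the paper shifts only to the line $\Re\lambda=\lambda_1$, which by $\eta_1(r)>\lambda_1$ lies strictly to the \emph{left} of the unique zero in the strip, so no pole is crossed, no residue appears, and the estimate $\left|\partial_r G(t,r)\right|\le K_0e^{\lambda_1 t}$ follows in one step. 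You instead push past $\eta_1(r)$ to a line $\Re\lambda=c_0\in(\eta_1(r),2\lambda_1)$, collect the residue $A(r)e^{\eta_1(r)t}$, and must then control the extra factor $t$ produced by $\partial_r e^{\eta_1(r)t}$ via the bound on $|t|e^{(\eta_1(r)-\lambda_1)t}$ for $t<0$. Your route is slightly longer but yields more: it isolates the leading asymptotics of $G(t,r)$ as $t\to-\infty$ and makes completely explicit why the strict inequality $\eta_1(r)>\lambda_1$ is indispensable (it is what defeats the linear growth in $t$), a point the paper's shorter argument uses only implicitly to license its choice of contour. Both arguments share the same implicit restriction to sufficiently small $r>0$, without which Proposition \ref{pro 1}(iii) and the gap $\eta_1(r)>\lambda_1$ are unavailable.
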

\begin{proof}

\medskip
The Green's function for the delay solution is given by (see Mallet-Paret, \cite{mal,pru}).
\[G(t,r)=\frac{1}{2\pi}\int_{-\infty}^{\infty}\frac{e^{i\xi t}}{-\xi^2-ai\xi e^{i\xi r}-be^{i\xi r}}d\xi.\]
The aforementioned integral is absolutely convergent, so
\begin{align}
 \frac{\partial G(t,r)}{\partial r} &=\frac{1}{2\pi}\frac{\partial}{\partial r}  \int_{-\infty}^{\infty} \frac{e^{i\xi t}}{-\xi^2-ai\xi e^{i\xi r}-be^{i\xi r}}d\xi 
 \nonumber\\
 &=\frac{1}{2\pi}  \int_{-\infty}^{\infty}\frac{\partial}{\partial r} \left(\frac{e^{i\xi t}}{-\xi^2-ai\xi e^{i\xi r}-be^{i\xi r}}\right)d\xi \nonumber \\
  &= \frac{1}{2\pi}  \int_{-\infty}^{\infty} \frac{e^{i\xi (t+r)}\left(a\xi^2-b\xi i\right)}{\left(\xi^2+ai\xi e^{i\xi r}+be^{i\xi r}\right)^2} d\xi  \label{2.1}
 \end{align}

By Part (iii) Proposition \ref{pro 1} and Lemma \ref{lem 1.3} we can move the integral line down to the parallel line $\{ z=\xi -i \lambda_1, \xi \in \R\}$, so we have
\begin{align}
\left|  \frac{\partial G(t,r)}{\partial r} \right| &= \left| \frac{1}{2\pi}  \int_{-\infty}^{\infty} 
\frac{e^{i(\xi-i\lambda_1) (t+r)}\left(a(\xi-i\lambda_1)^2-b(\xi-i\lambda_1) i\right)}
{\left((\xi-i\lambda_1)^2+ai(\xi-i\lambda_1) e^{i(\xi-i\lambda_1) r}+be^{i(\xi-i\lambda_1) r}\right)^2} d\xi \right|   \nonumber\\
 &= \frac{e^{\lambda_1(t+r)}}{2\pi} \left| \int_{-\infty}^{\infty} \frac{e^{i\xi (t+r)}\left(a(\xi-i\lambda_1)^2-b(\xi-i\lambda_1|) i\right)}
 {\left((\xi-i\lambda_1)^2+ai(\xi-i|\lambda_1) e^{i(\xi-i\lambda_1) r}+be^{i(\xi-i\lambda_1) r}\right)^2} \right| d\xi \nonumber \\
 &\le K_0e^{\lambda_1t} , \label{2.3} 
 \end{align}
where 
$$
K_0:=\frac{e^{\lambda_1 r}}{2\pi}\int_{-\infty}^{\infty} \frac{\left|a(\xi+i\lambda_1)^2-b(\xi+i\lambda_1) i\right|}{\left|(\xi-i\lambda_1)^2+ai(\xi-i\lambda_1) e^{i(\xi-i\lambda_1) r}+be^{i(\xi-i\lambda_1) r}\right|^2} d\xi <\infty .
$$
Therefore, for $t<0$,
\begin{align}
\left|  \frac{\frac{\partial G(t,r)}{\partial r}}{G(t)}  \right| &\le K_1 .
\end{align}
\end{proof}

Now, we can prove Theorem \ref{the 1}.
\begin{proof}
The case $t<0$: In light of Lemma \ref{lem1}, the fact that the partial derivative of the Green's function is convergent, and the Lagrange Mean Value, we have the following estimate.
\begin{align*}\left|G(t,r)-G(t)\right|&\le \sup_{0<\omega<r}\left|\frac{\partial G(t,r)}{\partial r}\right|r\\
 \left|\frac{G(t,r)}{G(t)}-1\right|&\le M r
 \end{align*}
If we take $0<r<{1}/{M},$ then ${G(t,r)}/{G(t)}>0.$ This means, for sufficiently small $r>0$, the Green function $G(t,r)<0$ for all $t<0$. 

\bigskip
The case $t>0$: By Proposition \ref{pro 1}, in the upper half of the complex plane $\{ z\in\C :\ \Im z\ge 0\}$ the function
$\xi^2+ai\xi e^{i\xi r}+be^{i\xi r}$ has a unique imaginary root $-i\eta_2(r)$ that is a simple pole. As $t>0$ and $\Im \xi >0$, we have
\begin{align*}
\left| \frac{e^{i\xi t}}{-\xi^2-ai\xi e^{i\xi r}-be^{i\xi r}} \right| &=  \left| \frac{e^{-\Im \xi t}}{-\xi^2-ai\xi e^{i\xi r}-be^{i\xi r}} \right|\\
&\le  \frac{1}{|\xi^2+ai\xi e^{i\xi r}+be^{i\xi r}|}.
\end{align*}
Therefore, in the upper half plane using a half-circle centered at the origin with large radius, and then the Residue Theorem we can show that
\begin{align}
 G(t,r) &=\frac{1}{2\pi}\int_{-\infty}^{\infty}\frac{e^{i\xi t}}{-\xi^2-ai\xi e^{i\xi r}-be^{i\xi r}}d\xi \nonumber \\
 &=i Res (g; -i\eta_2)\nonumber\\
 &= i\lim_{z\to-i\eta_2}  \frac{e^{iz t}(z+i\eta_2)}{-z^2-aize^{iz r}-be^{iz r}},
\end{align}
where
$$
g(z,r):= \frac{e^{iz t}(z+i\eta_2)}{-z^2-aize^{iz r}-be^{iz r}}.
$$
Therefore,
\begin{align}
i \lim_{z\to - i\eta_2} (z+i\eta_2) g(z,r) &=i e^{i(-i\eta_2) t}   \lim_{z\to - i\eta_2}   \frac{z+i\eta_2}{-z^2-aize^{iz r}-be^{iz r}}  \nonumber \\
&= e^{\eta_2 t}  \lim_{z\to - i\eta_2}   i\frac{z+i\eta_2}{-z^2-aize^{iz r}-be^{iz r}}  .
\end{align}

Since $-i\eta_2$ is a simple pole of the function  $-\xi^2-ai\xi e^{i\xi r}-be^{i\xi r}$ the function
$$
h(z,r):= i\frac{z+i\eta_2}{-z^2-aize^{iz r}-be^{iz r}}
$$
is extendable holomorphically in the upper plane, so it is continuous in the variable $(z,r)$. Hence
\begin{align*}
 \lim_{r\to 0}   \lim_{z\to - i\eta_2}   h(z,r) &= \lim_{(r,z)\to (0,-i\lambda_2)}h(z,r)\\
 &=\lim_{z\to -i\lambda_2}\lim_{r\to 0} h(z,r)\\
 &=\lim_{z\to -i\lambda_2}i\frac{z+i\lambda_2}{-z^2-aiz-b}\\
 &=\lim_{z\to -i\lambda_2}i\frac{z+i\lambda_2}{-(z+i\lambda_1)(z+i\lambda_2)}\\
 &= \frac{i}{-(-i\lambda_2+i\lambda_1)}\\
 &=\frac{1}{ \lambda_2-\lambda_1} <0.
\end{align*}
This show that there is a sufficiently small $r$ (independent of $t$) such that the function $G(t,r) <0$ for all $t >0$. This completes the proof.

\end{proof}

\section{Monotone Iteration Method for Traveling Waves}\label{monotone iteration}
This section will focus on equations of the form 
\begin{equation}\label{RD1}
\frac{\partial u(x,t)}{\partial t}=D\frac{\partial^2 u(x,t-\tau_1)}{\partial x^2} +f(u_t),
\end{equation}
where $t\in\R, \tau_1>0,  x, u(x,t)\in \R, \ D>0, \ f:C\left([-\tau_2,0], \R\right)\to \R$ is continuous and $u_t(x)\in C\left([-\tau_2,0], \R\right),$ defined as 
\[u_t(x)=u(x, t+\theta), \ \theta\in [-\tau_2,0], \ t\ge 0, \ x\in \R.\]
Moreover, we will assume that $f$ is Lipschitz continuous and 
\[f(\hat 0)=f(\hat K)=0, \ \text{and} \ f(u)\neq 0, \ \hat 0<u< \hat K.\]
We are interested in traveling wave solutions of the form $u(x,t)=\phi(x+ct), \ c>0.$ This transformation gives 
\begin{align*}\label{RD2}
u(x,t)&=\phi(x+ct)\\
\frac{\partial u(x,t)}{\partial t}&= c\phi'(x+ct)\\
\frac{\partial^2 u(x,t-\tau_1)}{\partial^2 x}&=\phi''(x+c(t-\tau_1)).
\end{align*}
Setting $\xi=x+ct, r_1=c\tau_1,$ Eq. (\ref{RD1}) becomes the following ordinary differential equation
\begin{equation}\label{W1}
c\phi'(\xi)=D\phi''(\xi-r_1)+f_c(\phi_\xi ),    
\end{equation}
where
$f_{c}\in \X_c:= :C([-c\tau_2,0],\mathbb{R}^{n})\rightarrow\mathbb{R}$,
defined as
\[
f_{c}(\psi)=f(\psi^{c}),\quad\psi^{c}(\theta):=\psi(c\theta),\quad\theta
\in\lbrack-\tau_2,0].
\]
By a shift of variable we can reduce the equation to 
\begin{equation}\label{wave}
D\phi'' (\xi)-c\phi' (\xi+r_1)+f_{c}(\phi_{\xi+r_1})=0,\;\;\xi
\in\mathbb{R},
\end{equation}

\bigskip
The main purpose of this section is to look for solutions $\phi$ of
(\ref{wave}) in the following subset of
$C(\mathbb{R},\mathbb{R})$
\[
\Gamma:=\{\varphi\in
C(\mathbb{R},\mathbb{R}):\varphi\;\;\;\mbox{is
nondecreasing, and}\ \ \lim_{\xi\rightarrow-\infty}\varphi(\xi)=0
,\ \lim_{\xi\rightarrow+\infty}\varphi(\xi)={K}\} .
\]

\bigskip
We assume that 
there exists a positive number $\beta$ with such that
\begin{equation}\label{A}
f_{c}(\phi)-f_{c}(\psi)+\beta\lbrack\phi(0)-\psi(0)]\geq 0,
\end{equation}
for all $\phi , \psi \in \X_c$ such that $\phi \ge \psi$. Let us consider the linear operator ${\cal L}$ in $BC(\R,\R)$ defined as
\begin{equation}\label{oper L}
{\cal L}(\phi)(\xi):= D\phi'' (\xi)-c\phi' (\xi+r_1)-\beta \phi(\xi +r_1)=f(t),
\end{equation}
where $f\in BC(\R,\R )$, and 
\begin{equation}\label{A2}
H(\phi)(t)=f_{c}(\phi_{t+r_1})+\beta\phi(t+r_1),\;\;\;\;\;\;\;\;\phi\in C(\mathbb{R}%
,\mathbb{R}).
\end{equation}
By Proposition \ref{pro 1} and Theorem \ref{the per2} we can see that the operator ${\cal L}$ is invertible and by Theorem \ref{the 1}  the operator $-{\cal L}^{-1}$ is monotone in the space $BC(\R,\R)$.  
As proved in \cite{wuzou}, the operator $H$ enjoys similar properties:

\begin{lemma}
\label{lem 3.1} Assume that (\ref{A}). Then, for
any $\phi\in\Gamma,$ we have that
\begin{enumerate}
\item $H(\phi)(t)\ge 0,\ t\in\mathbb{R}$,
\item $H(\phi)(t)$ is nondecreasing in $t\in\mathbb{R,}$
\item $H(\psi)(t)\le H(\phi)(t)$ for all $t\in\mathbb{R}$, if $\psi\in
C(\mathbb{R},\mathbb{R})$ is given so that
$0\le \psi(t)\leq \phi(t)\le K$ for all
$t\in\mathbb{R}$.
\end{enumerate}
\end{lemma}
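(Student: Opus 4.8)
The plan is to derive all three assertions from the single monotonicity hypothesis (\ref{A}), proving (iii) first since it is the most direct, and then obtaining (i) and (ii) as specializations of it. For (iii), fix $t\in\R$ and write
\[
H(\phi)(t)-H(\psi)(t)=\big[f_c(\phi_{t+r_1})-f_c(\psi_{t+r_1})\big]+\beta\big[\phi(t+r_1)-\psi(t+r_1)\big].
\]
The key point is that the pointwise inequality $0\le\psi\le\phi\le K$ on $\R$ translates, for each fixed $t$, into the order $\psi_{t+r_1}\le\phi_{t+r_1}$ between the corresponding history segments viewed as elements of $\X_c$, since $\psi_{t+r_1}(\theta)=\psi(t+r_1+\theta)\le\phi(t+r_1+\theta)=\phi_{t+r_1}(\theta)$ for every $\theta\in[-c\tau_2,0]$. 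Because $\phi(t+r_1)=\phi_{t+r_1}(0)$ and $\psi(t+r_1)=\psi_{t+r_1}(0)$, applying (\ref{A}) to the ordered pair $(\phi_{t+r_1},\psi_{t+r_1})$ shows the right-hand side is nonnegative, which is exactly (iii).

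For (i), I would invoke (iii) with the particular lower function $\psi=\hat 0$. Since $\phi\in\Gamma$ is nondecreasing with limits $0$ and $K$, one has $0\le\phi\le K$, so the hypotheses of (iii) hold. It then suffices to verify that $H(\hat 0)\equiv 0$: from the definition $\psi^c(\theta)=\psi(c\theta)$ we get $(\hat 0)^{c}=\hat 0$, hence $f_c(\hat 0)=f(\hat 0)=0$ by the standing assumption, while the $\beta$-term vanishes; therefore $H(\phi)(t)\ge H(\hat 0)(t)=0$.

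For (ii), I would exploit the way $H$ intertwines with translation. For $h\ge 0$ put $\phi^{(h)}(s):=\phi(s+h)$; monotonicity of $\phi$ gives $\phi\le\phi^{(h)}\le K$ pointwise, with $\phi^{(h)}$ again belonging to $\Gamma$. A direct substitution in the history segment yields $(\phi^{(h)})_{t+r_1}=\phi_{t+h+r_1}$ and $\phi^{(h)}(t+r_1)=\phi(t+h+r_1)$, so that $H(\phi^{(h)})(t)=H(\phi)(t+h)$. Applying (iii) to the ordered pair $\psi=\phi\le\phi^{(h)}$ then gives $H(\phi)(t)\le H(\phi^{(h)})(t)=H(\phi)(t+h)$ for all $t\in\R$ and all $h\ge 0$, which is precisely the monotonicity claimed in (ii).

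None of these steps is genuinely deep; the only place I expect to need care is the bookkeeping that converts the pointwise partial order on $C(\R,\R)$ into the induced order on the segment space $\X_c$, together with the verification $f_c(\hat 0)=0$. Once that correspondence is set up, (i) and (ii) follow formally from (iii) by specialization and by the translation identity $H(\phi^{(h)})(t)=H(\phi)(t+h)$, respectively.
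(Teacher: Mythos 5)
Your proof is correct and follows essentially the standard route: the paper itself gives no proof of this lemma, deferring entirely to Wu--Zou, and your argument—establishing (iii) directly from the quasimonotonicity condition (\ref{A}) applied to the ordered segments $\psi_{t+r_1}\le\phi_{t+r_1}$, then deducing (i) by specializing to $\psi=\hat 0$ (using $f_c(\hat 0)=f(\hat 0)=0$) and (ii) via the translation identity $H(\phi(\cdot+h))(t)=H(\phi)(t+h)$—is exactly the argument used there. No gaps.
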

With this in mind, if we rewrite Eq.(\ref{wave}) in the form
\begin{equation}\label{fixed point}
\phi = -{\cal L}^{-1}H(\phi) , \ \phi \in \Gamma ,
\end{equation}
then, $\phi$ is a fixed point of the monotone operator $ -{\cal L}^{-1}H$ in $\Gamma$. Our next steps below will make sure that under some conditions $ -{\cal L}^{-1}H$ is well defined in $\Gamma$ or in a closed convex subset of $\Gamma$ where $-{\cal L}^{-1}H$ is a compact operator. Then, the fixed point of $-{\cal L}^{-1}H$ will be guaranteed by the Schauder Fixed Point Theorem.

\begin{definition} A function $\varphi \in BC^2(\R,\R)$ is called an upper solution (lower solution, respectively) for the wave equation (\ref{wave}) if it satisfies the following
\begin{align*}
& D\varphi''(t)-c\varphi'(t+r_1)+f^c(\varphi_{t+r_1})\le 0 , \\
& (D\varphi''(t)-c\varphi'(t+r_1)+f^c(\varphi_{t+r_1})\ge 0, \ \mbox{respectively})
\end{align*}
for all $t\in\R$.
\end{definition}

Below we will list some standing conditions on Eq.(\ref{wave}) before state further conditions for the existence of traveling waves to Eq.(\ref{RD1}):
\begin{enumerate}
\item[(H1)] $f(\hat 0) =f(\hat K)=0,$ where $\hat 0$, ($\hat K$, respectively) is the constant function $\phi (\theta) =0$ ($\phi (\theta )=K$, respectively), for all $\theta \in [-\tau_2,0]$;
\item[(H2)] There exists a positive constant $\beta$ such that
$$
f(\varphi)-f(\psi ) +\beta (\varphi(0)-\psi (0)) \ge 0
$$
for all $\varphi, \psi \in C([-\tau_2,0],\R)$ with $0\le \varphi (s)\le \phi (s)\le K$ for all $s\in [-\tau_2,0]$;
\item[(H3)] The operator $H$ is continuous in $BC(\R, [0,K])\to BC(\R,\R)$ and
$$
\sup_{\phi\in\Gamma} \| H(\phi)\| <\infty .
$$
\end{enumerate}
Before we proceed we set $F:=-{\cal L}^{-1}H.$
\begin{lemma}\label{lem 5.4}
Let $\phi \in BC^2(\R, [0,K])$. Assume further the standing assumptions (H1), (H2), (H3). Then,  $\phi$ is an upper solution (lower solution, respectively) of Eq.(\ref{wave}) if and only if
$$
F\phi \le \phi   \ (F\phi \ge \phi , \ \mbox{respectively}) .
$$
\end{lemma}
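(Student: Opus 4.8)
The plan is to reduce the second-order differential inequality that defines an upper (lower) solution to a plain order inequality between $\phi$ and $F\phi$, by means of a single cancellation identity. Introduce the wave defect
$$
N\phi(t):= D\phi''(t)-c\phi'(t+r_1)+f_{c}(\phi_{t+r_1}),
$$
so that, by the definition of upper and lower solution, $\phi$ is an upper solution precisely when $N\phi\le 0$ and a lower solution precisely when $N\phi\ge 0$. First I would read off from the definition (\ref{oper L}) of ${\cal L}$ and the definition (\ref{A2}) of $H$ that the two $\beta\phi(t+r_1)$ contributions cancel, which gives the identity
$$
{\cal L}(\phi)(t)+H(\phi)(t)=N\phi(t),\qquad t\in\R.
$$
Thus the upper/lower solution condition is exactly ${\cal L}\phi+H\phi\le 0$ (respectively $\ge 0$).

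Next, since for sufficiently small $r_1$ Proposition \ref{pro 1} guarantees no characteristic root on the imaginary axis, Theorem \ref{the per2} makes ${\cal L}$ an isomorphism with bounded inverse given by convolution with the Green function $G(\cdot,r_1)$. Writing $\phi={\cal L}^{-1}{\cal L}\phi$ and recalling $F=-{\cal L}^{-1}H$, I would then compute
$$
\phi-F\phi=\phi+{\cal L}^{-1}H\phi={\cal L}^{-1}\big({\cal L}\phi+H\phi\big)={\cal L}^{-1}(N\phi).
$$
This one identity collapses the entire lemma to the equivalence, for $\phi\in BC^2(\R,[0,K])$, between $N\phi\le 0$ and ${\cal L}^{-1}(N\phi)\ge 0$, the latter inequality being literally $F\phi\le\phi$. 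The forward half is clean and I would dispatch it first: by Theorem \ref{the 1} the Green function is strictly negative for small $r_1$, so the operator $g\mapsto -{\cal L}^{-1}g=(-G)\ast g$ is positive (monotone). Consequently $N\phi\le 0$ yields ${\cal L}^{-1}(N\phi)=G\ast(N\phi)\ge 0$, that is $F\phi\le\phi$, and the lower-solution statement follows verbatim with all inequalities reversed.

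The reverse implication, ${\cal L}^{-1}(N\phi)\ge 0\Rightarrow N\phi\le 0$, is where I expect the real difficulty, and it is the main obstacle. It is not a formal consequence of the positivity of $-{\cal L}^{-1}$ alone: a positive linear bijection need not reflect the positive cone, and indeed $-{\cal L}$ is not itself a positive operator (one can exhibit a strictly positive bounded $\psi$ for which ${\cal L}\psi$ changes sign), so one cannot legitimately apply ${\cal L}$ to the inequality and read off a sign; in other words the naive claim ``$G\ast g\ge 0\Rightarrow g\le 0$'' is false for a general bounded continuous $g$. The plan is therefore to upgrade the positivity of $-{\cal L}^{-1}$ to an order-reflecting property restricted to the admissible class, where $g=N\phi$ with $\phi\in BC^2(\R,[0,K])$. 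Concretely I would argue by contradiction, assuming $N\phi(t_0)>0$ for some $t_0$ and trying to force ${\cal L}^{-1}(N\phi)=G\ast(N\phi)$ to be negative somewhere, using the continuity of $N\phi$ (available because $\phi\in BC^2$), the strict negativity of $G$, and the decay estimate $\|G(\xi,r_1)\|\le Ke^{-\alpha|\xi|}$ from Theorem \ref{the per}. Isolating exactly which structural feature of $N\phi$ and of the range $[0,K]$ rescues this argument from the general counterexample is the delicate point on which the whole equivalence turns. Once this order-reflection is secured, the equivalence $N\phi\le 0\Leftrightarrow{\cal L}^{-1}(N\phi)\ge 0$, and hence the full ``if and only if'' of the lemma together with its lower-solution counterpart, follow at once.
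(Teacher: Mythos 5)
Your forward implication is exactly the paper's argument: rewrite the upper-solution inequality as ${\cal L}\phi + H(\phi) \le 0$, apply the positive operator $-{\cal L}^{-1}$ (Theorem \ref{the 1}), and read off $\phi \ge F\phi$; the cancellation identity $\phi - F\phi = {\cal L}^{-1}\bigl({\cal L}\phi + H(\phi)\bigr)$ you isolate is the correct mechanism. The genuine gap is that you never prove the converse. You reduce it to the order-reflection statement that ${\cal L}^{-1}g \ge 0$ forces $g \le 0$ for $g = N\phi$, correctly observe that this is false for general bounded continuous $g$ (and that one cannot simply apply ${\cal L}$ to the inequality, since $-{\cal L}$ is not a positive operator), sketch a contradiction argument, and then stop at the admission that you do not know which structural feature of $N\phi$ rescues it. As a proof of an ``if and only if'' this is an unfinished half: writing $w := \phi - F\phi \ge 0$, the converse asks that ${\cal L}w = Dw'' - cw'(\cdot+r_1) - \beta w(\cdot+r_1) \le 0$, and a nonnegative $w \in BC^2$ with a sharp interior dip (large $w''>0$ where $w$ and $w'$ are small) shows this cannot follow from $w \ge 0$ alone, so some additional input really is required and your proposal does not supply it.

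You should know, however, that the paper does no better: its entire treatment of this direction is the sentence ``Conversely, we can reverse the argument,'' which is precisely the step you demonstrate is not formally reversible. So your diagnosis is sharper than the source, and the gap you flag is a gap in the paper's proof as well, not one the paper resolves. If you want to repair the one place the converse is actually invoked (Proposition \ref{smoothprop1}, to conclude that $F\phi$ is an upper solution), you can bypass order-reflection entirely: for $\psi := F\phi = -{\cal L}^{-1}H(\phi)$ one has ${\cal L}\psi = -H(\phi)$ exactly, hence $N\psi = {\cal L}\psi + H(\psi) = H(\psi) - H(\phi) \le 0$ by the monotonicity of $H$ (Lemma \ref{lem 3.1}) together with $\psi \le \phi$. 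That direct computation delivers the needed upper-solution property without ever inverting an inequality through ${\cal L}$, and it is what I would substitute for the unproved converse in both your write-up and the paper's.
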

\begin{proof}
By definition, if $\phi$ is an upper solution, then
$$
 D\varphi''(t)-c\varphi'(t+r_1)+f^c(\varphi_{t+r_1})\le 0 .
$$
Since $\phi\in BC^2(\R,[0,K])$ and assumptions (H1), (H2) and (H3) we have
$$
{\cal L}\phi +H(\phi ) \le 0 .
$$
Therefore, as $-{\cal L}^{-1} $ is monotone, we have
$$
(-{\cal L}^{-1} )({\cal L}\phi  + H(\phi ) )\le 0.
$$
Consequently,
\begin{align}
-\phi -{\cal L}^{-1} H(\phi ) &\le 0.
\end{align}
Hence
$$
\phi \ge -{\cal L}^{-1} H(\phi )=F\phi .
$$
Conversely, we can reverse the argument to show that $\phi \ge F\phi$ implies $\phi$ is an upper solution. Similarly, we can prove the claim on the lower solutions.
\end{proof}

\begin{theorem}\label{the main}
Under the standing assumptions (H1), (H2), (H3), if there are an upper $\overline{\varphi} \in \Gamma$ and a lower solutions $\underline{\varphi}\in \Gamma $ of Eq.(\ref{wave}) such that for all $ \ t\in \R $
$$
0\le  \underline{\varphi}(t)\le \overline{\varphi}(t) .
$$
Then, there exists a monotone traveling wave solution $\phi$ to Eq.(\ref{wave}).
\end{theorem}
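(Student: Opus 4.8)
The plan is to run the classical monotone iteration anchored at the upper solution, exploiting that $F=-{\cal L}^{-1}H$ is monotone, order preserving, and monotonicity preserving on $\Gamma$, and then to extract a fixed point by a compactness argument. First I would record the three structural facts that make $F$ usable. By Theorem \ref{the 1} the operator $-{\cal L}^{-1}$ is monotone, since its kernel $-G$ is positive; by Lemma \ref{lem 3.1}(iii) the operator $H$ is order preserving on the order interval $[\hat 0,\hat K]$, so the composition $F$ is monotone. By Lemma \ref{lem 5.4} the hypotheses translate into $F\overline{\varphi}\le\overline{\varphi}$ and $F\underline{\varphi}\ge\underline{\varphi}$. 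Finally $F$ preserves monotonicity: if $\psi=H(\phi)$ is nondecreasing (Lemma \ref{lem 3.1}(ii)), then for $h>0$ the shift $u_h(t):=(F\phi)(t+h)$ satisfies ${\cal L}(u_h-F\phi)=-(\psi_h-\psi)\le 0$ by constant-coefficient shift invariance, whence $u_h-F\phi=-{\cal L}^{-1}(\psi_h-\psi)\ge 0$ by monotonicity of $-{\cal L}^{-1}$.

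Next I would define the iteration $\phi_0:=\overline{\varphi}$, $\phi_{n+1}:=F\phi_n$, and show by induction that $\underline{\varphi}\le\phi_{n+1}\le\phi_n\le\overline{\varphi}$ and $\phi_n\in\Gamma$ for every $n$. Monotonicity of $F$ together with $F\overline{\varphi}\le\overline{\varphi}$ gives the decreasing chain, while $F\underline{\varphi}\ge\underline{\varphi}$ and $\underline{\varphi}\le\phi_n$ give the lower bound $\phi_{n+1}=F\phi_n\ge F\underline{\varphi}\ge\underline{\varphi}$; each $\phi_n$ is nondecreasing by the monotonicity preservation above, and since $\underline{\varphi},\overline{\varphi}\in\Gamma$ share the limits $0$ and $K$ at $\mp\infty$, the squeeze $\underline{\varphi}\le\phi_n\le\overline{\varphi}$ forces the same limits on $\phi_n$, so $\phi_n\in\Gamma$. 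The sequence is therefore pointwise monotone and uniformly bounded, hence converges pointwise to a nondecreasing limit $\phi$ with $\underline{\varphi}\le\phi\le\overline{\varphi}$, and the same squeeze gives $\phi(-\infty)=0$, $\phi(+\infty)=K$, so $\phi\in\Gamma$.

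The decisive step is to verify $\phi=F\phi$, that is, to pass the limit through the nonlocal operator, and this is where I expect the real work to lie. I would use the smoothing of ${\cal L}^{-1}$ together with (H3): since $\sup_n\|H(\phi_n)\|<\infty$ and ${\cal L}^{-1}$ maps $L^\infty$ boundedly into $W^{1,\infty}$ (Theorem \ref{the per2}), the iterates $\{\phi_n\}_{n\ge 1}$ are equi-Lipschitz, hence equicontinuous; by Dini's theorem the monotone pointwise convergence is uniform on compact sets and $\phi$ is continuous. Writing $(F\phi_n)(t)=-\int_{-\infty}^{\infty}G(t-s)H(\phi_n)(s)\,ds$, the Lipschitz continuity of $f$ and the local uniform convergence give $H(\phi_n)(s)\to H(\phi)(s)$ pointwise, while the exponential bound $|G(t-s)|\le Ke^{-\alpha|t-s|}$ and the uniform bound from (H3) supply an integrable dominating function. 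Dominated convergence then yields $(F\phi_n)(t)\to(F\phi)(t)$, and since $\phi_{n+1}=F\phi_n\to\phi$ we conclude $\phi=F\phi$, i.e. ${\cal L}\phi+H(\phi)=0$, so $\phi$ solves Eq.(\ref{wave}); as $H(\phi)\in BC(\R,\R)$, Theorem \ref{the per2} upgrades $\phi$ to a classical $BC^2$ solution. The main obstacle is precisely this interchange of limit and operator, resting on combining the ${\cal L}^{-1}$-equicontinuity, the uniform bound (H3), and the exponential decay of $G$; by contrast the behavior at $\pm\infty$, usually the delicate point for wave fronts, is tamed for free by the order squeeze between $\underline{\varphi}$ and $\overline{\varphi}$ in $\Gamma$. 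One could alternatively invoke the Schauder Fixed Point Theorem on the compact convex order interval, but the monotone iteration has the advantage of delivering a monotone wave directly.
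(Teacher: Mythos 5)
Your proposal is correct, and it shares the paper's entire setup --- the iteration $\phi_n=F^n\overline{\varphi}$ with $F=-{\cal L}^{-1}H$, monotonicity of $F$ from Theorem \ref{the 1} and Lemma \ref{lem 3.1}, the translation $F\overline{\varphi}\le\overline{\varphi}$, $F\underline{\varphi}\ge\underline{\varphi}$ via Lemma \ref{lem 5.4}, the shift-commutation argument showing each iterate is nondecreasing, and the squeeze between $\underline{\varphi}$ and $\overline{\varphi}$ to pin down the limits at $\pm\infty$ --- but it diverges at the decisive existence step. The paper does not pass to the limit along the iteration directly; instead it forms the closed convex order interval $\Gamma_1=\{\phi\in\Gamma:\underline{\varphi}\le\phi\le\overline{\varphi}\}$, proves in Lemma \ref{lem compact} that $-{\cal L}^{-1}$ (hence $F$) maps $\Gamma_1$ into a precompact subset of $BC(\R,\R)$ via Arzel\`a--Ascoli (using the exponential decay of the Green function for boundedness, equicontinuity, and control of the tails), and then invokes the Schauder Fixed Point Theorem, noting separately that any convergent subsequence of the monotone sequence $\{\phi_n\}$ forces convergence of the whole sequence. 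You instead obtain the fixed point as the pointwise monotone limit $\phi$ of $\{\phi_n\}$ and verify $\phi=F\phi$ by hand: equi-Lipschitz bounds from the smoothing of ${\cal L}^{-1}$ and (H3) give continuity of $\phi$ and local uniform convergence, and dominated convergence with the kernel bound $|G(t-s)|\le Ke^{-\alpha|t-s|}$ lets you interchange limit and convolution. Both routes are sound; yours avoids the compactness lemma and the fixed-point theorem altogether and delivers the wave explicitly as the limit of the iteration (one small remark: once you have equicontinuity you get local uniform convergence directly, so the appeal to Dini is redundant --- Dini would anyway require continuity of the limit as a hypothesis rather than a conclusion), while the paper's Schauder argument localizes all the analytic work in the single compactness lemma and is the machinery it reuses for Theorem \ref{the main2}, where the lower solution is no longer assumed to lie in $\Gamma$.
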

\begin{proof}
Set $\phi_n:=F^n(\overline{\varphi}), n\in\N$. As $H$ and ${-\cal L}^{-1}$ are both monotone, $F$ is monotone as well. By induction, we can show that $\phi_n$ is a monotone function for each $n$. In fact, denoting by $S^h$ the translation $\phi(\cdot ) \to \phi(\cdot -h)$, where we assume $h >0$ is a constant, in the function space $BC(\R,\R)$ we see that $S^h$ commutes with both $H$ and $-{\cal L}^{-1}$. Therefore, as $\phi \ge S^h\phi$, we have
$$
\phi_1 := F(\overline{\varphi} ) \ge F(S^h\overline{\phi})=S^h F(\overline{\varphi }),
$$
or
$$
\phi_1(t) \ge \phi_1 (t-h)
$$
for all $t\in \R$. Next, assume that $\phi_k(t)$ is a nondecreasing function. Then, using the above observation we can easily show by induction that $\phi_{k+1}(t)$ is also a nondecreasing function. This way, we obtain a sequence of bounded nondecreasing continuous functions $\{\phi_n(t)\}$. Moreover, 
$$
\underline{\varphi} \le \phi_n   \le \phi_{n+1}\le \overline{\varphi} , \ n\in \N .
$$
This shows that if there exists a subsequence of $\phi_n$ that is convergent in $\Gamma$, then the sequence itself must be convergent. 

\medskip
Consider the set 
$$
\Gamma_1:= \{ \phi\in \Gamma | \ \underline{\varphi} \le \phi \le \overline{\varphi}  \} .
$$
Clearly, that $\Gamma_1$ is a closed and convex subset of $BC(\R,\R)$. The restriction of $F$ to $\Gamma_1$ is well defined as an operator from $\Gamma_1$ to itself. In fact, let $\varphi \in \Gamma_1$, then as above we can show that $F(\varphi)$ is a monotone function on $\R$. Next, since
$
\underline{\varphi} \le \varphi \le \overline{\varphi},
$
and by assumption, we have
\begin{equation}\label{100}
\underline{\varphi} \le F(\underline{\varphi})  \le F(\varphi) \le F(\overline{\varphi}) \le \overline{\varphi}.
\end{equation}
Since
\begin{align*}
K & =\lim_{t\to\infty} \underline{\varphi}(t) \le  \lim_{t\to\infty}  F(\varphi) (t)\le \lim_{t\to\infty}  \overline{\varphi}(t) =K,\\
0 & =\lim_{t\to-\infty} \underline{\varphi} (t)\le  \lim_{t\to-\infty}  F(\varphi) (t)\le \lim_{t\to- \infty}  \overline{\varphi}(t) =0.
\end{align*}
by the Squeeze Theorem we  have 
\begin{align}
  \lim_{t\to\infty}  F(\varphi)(t)=K \
  \lim_{t\to-\infty}  F(\varphi) (t) =0.
\end{align}
This yields that
$ F(\varphi)  \in \Gamma$, so by (\ref{100}) $ F(\varphi)  \in \Gamma_1$.

Before proceeding we need the following lemma

\begin{lemma}\label{lem compact}
	For sufficiently small $r>0$, the operator $-{\cal L}^{-1}$ maps $\Gamma_1$ to a pre-compact subset of $BC(\R,\R)$.
\end{lemma}
\begin{proof}
	We will use the Arzela-Ascoli Theorem to prove the lemma. The boundedness of the set $-{\cal L}^{-1}(\Gamma_1)$ is clear from the exponential decay of $G(t,r)$ in $|t|$. In fact, since there exists positive constants $\delta, K_0$ such that $|G(t,r)| \le K_0e^{-\delta |t|}$ for all $t\in\R$
	\begin{align*}
		\|-{\cal L}^{-1} f \| & = \sup_{t\in\R}| -{\cal L}^{-1} f(t) | \\
		&= \sup_{t\in\R}
		| \int^\infty_{-\infty} G(t-s)f(s)ds| \\
		& \le\sup_{t\in\R} \int^\infty_{-\infty} |G(t-s,r)|\cdot |f(s)|ds \\
		&\le  \left( \int^\infty_{t} |G(t-s,r)|ds   + \int^t_{-\infty} |G(t-s)|ds     \right)\sup_{s\in\R}|f(s)| \\
		&\le \frac{2K_0}{\delta} \| f\| .
	\end{align*}
	Next, we are going to show that ${\cal L}^{-1}(\Gamma_1)$ is equicontinuous. Let $f\in \Gamma_1$ be any element. Then, since $f\in \Gamma_1$, $\| f\| \le \rho$, for a certain $\rho >0$ such that for any $x,y \in \R,$
	\begin{align*}
		|{\cal L}^{-1}f(x)-{\cal L}^{-1}f(y)| &=| \int^\infty_{-\infty}\left( G(x-s,r)-G(y-s,r)\right) f(s)ds| \\
		&\le    \int^\infty_{-\infty}\left| G(x-s,r)-G(y-s,r)\right| ds \cdot \sup_{s\in\R}| f(s)ds| \\
		&\le \rho \int^\infty_{-\infty}\left| G(x-s,r)-G(y-s,r)\right| ds.
	\end{align*}
	Given an $\epsilon_0 >0$, defined as $\epsilon:= \epsilon_0/(\rho K_0)$. $G(t,r)$ exponentially decays on each half interval of the real line, one can choose a sufficiently large number $N=N(\epsilon) >0$ so that  
	$$
	\int^\infty _N |G(s,r)|ds +\int^{-N}_{-\infty} |G(s,r)ds < \frac{ \epsilon}{8\rho }.
	$$
	Then, there exists a number $\delta_0>0$ dependant on $\epsilon$ such that 
	$$
	\left| G(x-s,r)-G(y-s,r)\right| < \frac{\epsilon}{8N\rho}
	$$
	for all $x,y \in [-N, -\epsilon/4]$ or $x,y\in [\epsilon/4, N]$. Finally, we have
	\begin{align*}
		|{\cal L}^{-1}f(x)-{\cal L}^{-1}f(y)| 
		&\le K \int^\infty_{-\infty}\left| G(x-s,r)-G(y-s,r)\right| ds\\
		&\le K\left( \int^\infty _N \left| G(x-s,r)-G(y-s,r)\right| ds +\int^{-N}_{-\infty} \left| G(x-s,r)-G(y-s,r)\right| ds\right) \\
		& \hspace{.5cm} +  K  \int^N_{\epsilon/4} \left| G(x-s,r)-G(y-s,r)\right| ds     +K  \int_{-N}^{-\epsilon/4}\left| G(x-s,r)-G(y-s,r)\right| ds  \\
		&\hspace{.5cm} + K  \int_{-\epsilon/4}^{\epsilon/4} \left| G(x-s,r)-G(y-s,r)\right| ds       \\
		&<  \frac{\epsilon}{4} +  \frac{\epsilon}{4} +\frac{\rho K_0\epsilon}{2}\\
		&= \epsilon_0,
	\end{align*}
	for all $x,y$ such that $|x-y|<\delta_0$ and $f\in \Gamma_1$. We are going to show that ${\cal L}^{-1}\Gamma$ is totally bounded. In fact, for each $\epsilon>0$ we can find a positive number $N$ such that 
	\begin{align*}
		| \underline \varphi (t) | < \epsilon , \ t < -N\\
		|\overline \varphi (t)-K| <\epsilon ,\ t > N .
	\end{align*}
	By the Arzela-Ascoli Theorem applying to the interval $[-N,N]$, the family of functions $\{f|_{[-N,N]}, f\in {\cal L}^{-1}\Gamma_1\}$ is totally bounded in the metric space $C([-N,N],\R)$. That yields that there are finitely many functions $\{f_1,f_2,\cdots , f_k\}$ functions of $ {\cal L}^{-1}\Gamma_1$ such that the balls $B_\epsilon (f_1),\cdots , B_\epsilon (f_k)$ is an open cover of $\{f|_{[-N,N]}, f\in {\cal L}^{-1}\Gamma_1\}$. By the way we choose $N$ we can conclude that the functions $B_\epsilon (f_1),\cdots , B_\epsilon (f_k)$ is an open cover of ${\cal L}^{-1}\Gamma_1\}$. That means ${\cal L}^{-1}\Gamma_1\}$ is precompact. The lemma is proved.
\end{proof}

\medskip
By Lemma \ref{lem compact}  the operator ${\cal L}^{-1}$ is compact, so the operator $F$ is compact as well. Therefore, $F$ is a continuous and compact operator from the closed and convex subset $\Gamma_1$ of the Banach space $BC(\R,\R)$. By the Schauder Fixed Point Theorem, $F$ must have a fixed point in $\Gamma_1$.
\end{proof}
When applying the results obtained in the above sections to particular models one is often faced with difficulty in constructing upper and lower solutions for the Monotone Iteration Method to work. To facilitate this process we can construct upper and lower solutions from rough functions, known as quasi-upper/lower solutions.

\begin{definition} A function $\varphi \in C^1(\R,\R),$ where $ \varphi, \varphi'$ are bounded on $\R$, $\varphi''$ is locally integrable and essentially bounded on $\R$ (that is, $\varphi''\in L^\infty$), is called a quasi- upper solution (quasi-lower solution, respectively) for the wave equation (\ref{wave}) if it satisfies the following for almost every $t\in \R$
\begin{align*}
& D\varphi''(t)-c\varphi'(t+r_1)+f^c(\varphi_{t+r_1})\le 0 , \\
& (D\varphi''(t)-c\varphi'(t+r_1)+f^c(\varphi_{t+r_1})\ge 0, \ \mbox{respectively}) .
\end{align*}
\end{definition}

\begin{definition}
A continuous function $\varphi:\r\to \R,$ that is  $C^1(\r,\r)$ a.e outside of the set $\T$  where  is called a super solution (sub solution, respectively) for the wave equation (\ref{wave}) if it satisfies the following for  $t\in \R/\T$
\begin{align*}
& \varphi''(t)-c\varphi'(t+r_1)+f_c(\varphi_{t+r_1})\le 0 , \\
& (\varphi''(t)-c\varphi'(t+r_1)+f_c(\varphi_{t+r_1})\ge 0, \ \mbox{respectively}) .
\end{align*}    
\end{definition}
\begin{proposition}\label{smoothprop1}
Let $\phi$ be a nondecreasing quasi-upper solution (quasi-lower solution, respectively) of Eq.(\ref{wave}) such that $\phi(t)\in [0,K]$ for all $t\in \R$. Then, $F\phi$ is a nondecreasing upper solution (lower solution, respectively) of Eq.(\ref{wave}).
\end{proposition}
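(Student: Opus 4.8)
The plan is to exploit the identity
\[
{\cal L}\phi(t)+H(\phi)(t)=D\phi''(t)-c\phi'(t+r_1)+f_{c}(\phi_{t+r_1}),
\]
so that the quasi-upper solution condition says exactly ${\cal L}\phi+H(\phi)\le 0$ for almost every $t$. The first thing I would record is the regularity point that makes the proof work: since a quasi-upper solution is $C^{1}$ with $\phi,\phi'$ bounded and $\phi''\in L^\infty$, the function $\phi'$ is absolutely continuous and $\phi$ falls into exactly the class of functions to which the $L^\infty$ form of the Perron Theorem \ref{the per2} applies. Hence ${\cal L}\phi\in L^\infty$, and by uniqueness of the bounded solution we get ${\cal L}^{-1}({\cal L}\phi)=\phi$.

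Next I would establish $F\phi\le\phi$, mimicking Lemma \ref{lem 5.4} but now in the $L^\infty$ setting. Because $-{\cal L}^{-1}$ is convolution against $-G(\cdot,r)>0$ (Theorem \ref{the 1}), it sends a function that is $\le 0$ almost everywhere to a continuous function that is $\le 0$ everywhere. Applying $-{\cal L}^{-1}$ to ${\cal L}\phi+H(\phi)\le 0$ and using ${\cal L}^{-1}{\cal L}\phi=\phi$ from the previous step yields $-\phi+F\phi\le 0$, i.e. $F\phi\le\phi$.

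Now set $\psi:=F\phi=-{\cal L}^{-1}H(\phi)$. Since $\phi$ is bounded and continuous, $H(\phi)\in BC(\R,\R)$, so by Theorem \ref{the per2} $\psi$ is a classical $BC^{2}$ solution of ${\cal L}\psi=-H(\phi)$. Substituting $D\psi''(t)-c\psi'(t+r_1)=\beta\psi(t+r_1)-H(\phi)(t)$ into the wave operator evaluated at $\psi$ gives
\[
D\psi''(t)-c\psi'(t+r_1)+f_{c}(\psi_{t+r_1})=\bigl[f_{c}(\psi_{t+r_1})-f_{c}(\phi_{t+r_1})\bigr]+\beta\bigl[\psi(t+r_1)-\phi(t+r_1)\bigr].
\]
Because $\psi=F\phi\le\phi$, the shifted segments satisfy $\psi_{t+r_1}\le\phi_{t+r_1}$, so the monotonicity condition (\ref{A}) applied to the pair $\phi_{t+r_1}\ge\psi_{t+r_1}$ forces the right-hand side to be $\le 0$ for every $t$; thus $\psi$ is an upper solution. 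For monotonicity of $\psi$ I would reuse the argument from the proof of Theorem \ref{the main}: both $H$ and the convolution $-{\cal L}^{-1}$ commute with the translation $S^{h}$, hence so does $F$; since $\phi$ nondecreasing means $\phi\ge S^{h}\phi$ for $h>0$, monotonicity of $F$ gives $F\phi\ge S^{h}(F\phi)$, i.e. $\psi$ is nondecreasing.

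The main obstacle is the opening step, namely justifying ${\cal L}^{-1}{\cal L}\phi=\phi$ for a merely quasi-upper solution. This is precisely where the reduced regularity ($C^{1}$ with $\phi''\in L^\infty$, rather than $BC^{2}$) enters, and the Remark following Theorem \ref{the per2} warns that the absolute continuity of $\phi'$ cannot be dropped; the quasi-solution definition is tailored so that $\phi''\in L^\infty$ delivers exactly this absolute continuity. The quasi-lower solution statement follows by reversing every inequality above.
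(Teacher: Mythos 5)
Your proposal is correct and follows essentially the same route as the paper: use the $L^\infty$ Perron theorem to get $\mathcal{L}^{-1}\mathcal{L}\phi=\phi$ and the regularity upgrade $F\phi\in BC^2$, deduce $F\phi\le\phi$ from the negativity of the Green function, and then conclude via the monotonicity built into (\ref{A}). The only cosmetic difference is at the last step, where the paper passes through $F\phi\ge F(F\phi)$ and cites Lemma \ref{lem 5.4}, while you unroll that lemma's converse direction by direct substitution; you also spell out the translation-commutation argument for the nondecreasing property, which the paper leaves implicit.
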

\begin{proof}
The basic idea here is that via the operator ${\cal L}^{-1}$ the smoothness of the quasi-upper or lower solutions is improved. Therefore, $F\phi$ has enough smoothness to be an upper (or lower solution.
By Theorem \ref{the per2}, the operator  ${\cal L}$ (mapping $\varphi (\cdot )\mapsto \varphi''(\cdot )-c\varphi '(\cdot +r_1)-\beta u(\cdot +r_1)$) induces the operator $T: (\varphi , \varphi')^T\mapsto (0,f)$ that
is an isomorphism between $W^{1,\infty}$ and $L^\infty$. Moreover, if in addition, $f$ is continuous, then  
$\varphi$ is a classical solution, that is, $\varphi$ is twice continuously differentiable and $\varphi,\varphi',\varphi''$ are bounded. This yields that $F\phi =-{\cal L}^{-1}H\phi$ is of class $C^2$ since $H\phi$ is continuous and bounded.
Since $\phi$ is a quasi-upper solution, arguing in the same manner as in Lemma \ref{lem 5.4} we can show that
$$
\phi \ge F\phi .
$$
Consequently, $F\phi \in BC^2(\R,[0,K])$. As $F$ is monotone, this yields $\phi \ge F\phi \ge F(F\phi)$. In particular, this shows that $F\phi$ satisfies all conditions of Lemma \ref{lem 5.4} to be an upper solution of Eq.(\ref{wave}).
\end{proof}

\medskip
\noindent In order to simplify the construction of quasi upper/lower solutions we can define the Fourier transform \[\mathcal{F}\left(\phi\right)(t)=\int^\infty_{-\infty} G(t-s,r_1)H(\phi(s))ds,,\] 
where $H(\phi(s))$ is continuous on $\r$ and continuously differentiable a.e, then we have the following.
\begin{lemma}
Let ${\phi}$ be a super solution (sub solution, respectively) of Eq. (\ref{wave}). Then, $\mathcal{F}(\phi)$ is an quasi-upper solution (quasi-lower solution, respectively) of Eq. 
(\ref{wave}).
\end{lemma}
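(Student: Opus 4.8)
The plan is to check the two defining properties of a quasi-upper solution for $\psi := \mathcal{F}(\phi)$ separately, exactly in the spirit of Proposition \ref{smoothprop1}: the regularity ($\psi\in C^1(\R,\R)$ with $\psi,\psi'$ bounded and $\psi''\in L^\infty$) and the a.e. differential inequality. The guiding observation is that the forcing term $H(\phi)(t)=f_c(\phi_{t+r_1})+\beta\phi(t+r_1)$ depends on $\phi$ only through its \emph{values}, so even though $\phi$ is merely continuous and $C^1$ off $\T$, the function $H(\phi)$ is continuous and bounded. By the convolution representation and Theorem \ref{the per2}, $\psi=\mathcal{F}(\phi)$ is then the unique bounded solution of the linear equation $\mathcal{L}\psi=-H(\phi)$ (so that $\mathcal{F}$ is the operator $F=-\mathcal{L}^{-1}H$ of the monotone iteration), and Theorem \ref{the per2} gives at once that $\psi$ and $\psi'$ are absolutely continuous with $\psi''\in L^\infty$, while the exponential decay of $G(\cdot,r_1)$ together with the bound on $H(\phi)$ gives the boundedness of $\psi$ and $\psi'$. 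Thus the regularity demanded of a quasi-upper solution is essentially free, and all the work sits in the inequality.

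Next I would reduce the inequality to the order relation $\mathcal{F}(\phi)\le\phi$. Since $\mathcal{L}\psi=-H(\phi)$, adding and subtracting $\beta\psi(\cdot+r_1)$ shows that the quasi-upper inequality for $\psi$ is $\mathcal{L}\psi+H(\psi)=H(\psi)-H(\phi)\le 0$, i.e. $H(\mathcal{F}\phi)\le H(\phi)$. By (H2), equivalently Lemma \ref{lem 3.1}, the operator $H$ is monotone on functions valued in $[0,K]$, so this holds as soon as $0\le\mathcal{F}\phi\le\phi\le K$ (recall that these solutions are taken with $0\le\phi\le K$, as in Proposition \ref{smoothprop1}). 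Positivity $\mathcal{F}\phi\ge 0$ is immediate from $H(\phi)\ge 0$ and the monotonicity of $-\mathcal{L}^{-1}$ furnished by Theorem \ref{the 1}, so everything hinges on establishing $\mathcal{F}\phi\le\phi$.

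To obtain $\mathcal{F}\phi\le\phi$ I would rerun the argument of Lemma \ref{lem 5.4}, now read in the distributional sense. The super solution inequality holds classically for $t\in\R\setminus\T$; at the exceptional set the corners of $\phi$ are downward (the left derivative dominates the right), so the singular part of the distributional second derivative of $\phi$ is a non-positive measure, and hence $\mathcal{L}\phi+H(\phi)\le 0$ as a signed measure on $\R$. Applying the monotone operator $-\mathcal{L}^{-1}$, which acts by convolution against the strictly positive kernel $-G(\cdot,r_1)$ and therefore sends non-positive measures to non-positive functions, and using the fundamental-solution identity $-\mathcal{L}^{-1}(\mathcal{L}\phi)=-\phi$ (valid for bounded $\phi$ by the uniqueness of bounded solutions in Theorem \ref{the per2}), gives $-\phi-\mathcal{L}^{-1}H(\phi)\le 0$, that is $\phi\ge -\mathcal{L}^{-1}H(\phi)=\mathcal{F}\phi$. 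Combining this with the regularity of the first paragraph and the reduction of the second, $\psi=\mathcal{F}(\phi)$ is a quasi-upper solution; reversing every inequality throughout handles the sub solution / quasi-lower solution case verbatim.

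The hard part is the last step: rigorously convolving $G$ with the measure $\mathcal{L}\phi$ and justifying the identity $G\ast(\mathcal{L}\phi)=\phi$ when the second derivative of the super solution carries a singular part concentrated on $\T$. The cleanest way to secure it is a mollification argument — approximate $\phi$ by smooth super solutions $\phi_\varepsilon$, apply the already-established smooth version (Lemma \ref{lem 5.4} and Proposition \ref{smoothprop1}) to each $\phi_\varepsilon$ to get $\mathcal{F}(\phi_\varepsilon)\le\phi_\varepsilon$, and pass to the limit using the continuity of $\mathcal{F}$ and the locally uniform convergence $\phi_\varepsilon\to\phi$. The only delicate point to verify there is that the regularization preserves the correct sign of the corner contributions, so that each $\phi_\varepsilon$ genuinely remains a (smooth) super solution.
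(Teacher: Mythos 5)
Your overall strategy is the one the paper intends (its own proof is a one-line pointer to Proposition \ref{smoothprop1}): identify $\mathcal{F}$ with $F=-\mathcal{L}^{-1}H$ (silently correcting the sign in the paper's displayed formula for $\mathcal{F}$), obtain the regularity of $\psi=\mathcal{F}(\phi)$ from Theorem \ref{the per2} because $H(\phi)$ is continuous and bounded, and reduce the quasi-upper inequality $\mathcal{L}\psi+H(\psi)=H(\psi)-H(\phi)\le 0$ to the comparison $0\le\psi\le\phi\le K$ via the monotonicity of $H$ (Lemma \ref{lem 3.1}) and of $-\mathcal{L}^{-1}$ (Theorem \ref{the 1}). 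That reduction is sound, and you are right that the entire content of the lemma sits in the single inequality $\mathcal{F}\phi\le\phi$ (resp.\ $\ge$).

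The gap is precisely at that crux. You assert that for a super solution ``the corners of $\phi$ are downward,'' so that the singular part of the distributional $\phi''$ carried by $\T$ is a non-positive measure. Nothing in the paper's definition of super/sub solution imposes any condition at $\T$: only the differential inequality on $\R\setminus\T$ is required. Writing $\phi''=\{\phi''\}+\sum_{t\in\T}j_t\delta_t$ with $j_t=\phi'(t^+)-\phi'(t^-)$, the fundamental-solution identity gives $\mathcal{F}\phi-\phi=(-G)*\bigl(\{\mathcal{L}\phi\}+H(\phi)\bigr)+D\sum_{t\in\T}j_t(-G)(\cdot-t)$, and since $-G>0$ the second sum has the sign of the $j_t$; without $j_t\le 0$ the desired conclusion does not follow. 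This is not a removable technicality: in the dual (sub solution) case your argument needs $j_t\ge 0$, yet the paper's own sub solutions for the Belousov--Zhabotinskii and Fisher--KPP models (e.g.\ $\underline{\varphi}_1(t)=e^{\lambda_2 t}/(2k)$ for $t\le 0$ and constant for $t>0$) have $j_0=-\lambda_2/(2k)<0$, the wrong sign. Consequently your mollification scheme cannot ``preserve the correct sign of the corner contributions,'' because that sign is not guaranteed in the first place. To close the proof you must either add the jump condition ($\phi'(t^-)\ge\phi'(t^+)$ on $\T$ for super solutions, the reverse for sub solutions) as an explicit hypothesis, or supply a quantitative estimate showing that the nonnegative term $(-G)*\bigl(\{\mathcal{L}\phi\}+H(\phi)\bigr)$ dominates the finitely many corner terms; neither is done in your proposal, nor in the paper.
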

\begin{proof} This can be shown in a similar manner to Propostion \ref{smoothprop1}.
\end{proof}
\begin{corollary}
Assume $(H1)$ and $ (H2)$  hold, if there is an super solution $\overline{\phi}$ in $\Gamma$ and a sub solution $\underline{\phi}$ for Eq.(\ref{wave}) not necessarily  in $\Gamma$  such that for all $t\in \R$
\[0\le  \underline{\phi}(t)\le \overline{\phi}(t).\]
Then, there exists a monotone traveling wave solution to the equation (\ref{wave}).
\end{corollary}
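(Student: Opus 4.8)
The plan is to reduce the corollary to the main existence theorem (Theorem \ref{the main}) by using the two smoothing results to replace the rough super/sub solutions by genuine upper/lower solutions, and then to re-run the monotone iteration, the only new wrinkle being that the sub solution $\underline{\phi}$ is not assumed to belong to $\Gamma$. Concretely, the Lemma immediately preceding this corollary turns a super solution (resp. sub solution) into a quasi-upper (resp. quasi-lower) solution, and Proposition \ref{smoothprop1} turns a nondecreasing quasi-upper (resp. quasi-lower) solution into a genuine upper (resp. lower) solution; composing the two smoothing steps should send $\overline{\phi}$ and $\underline{\phi}$ to honest upper and lower solutions in $BC^2$. Throughout I will use that $F=-{\cal L}^{-1}H$, and likewise the smoothing operator of the preceding Lemma, are monotone and commute with the translations $S^h$, so that they preserve both the natural order and the monotonicity of functions.

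First I would set $\overline{\varphi}:=F(\mathcal{F}(\overline{\phi}))$. Since $\overline{\phi}\in\Gamma$ is nondecreasing, its smoothing $\mathcal{F}(\overline{\phi})$ is again nondecreasing with range in $[0,K]$, so Proposition \ref{smoothprop1} applies and $\overline{\varphi}$ is a nondecreasing upper solution in $BC^2(\R,[0,K])$. The smoothing inequalities $\overline{\phi}\ge\mathcal{F}(\overline{\phi})\ge F(\mathcal{F}(\overline{\phi}))=\overline{\varphi}$ (established by reversing the argument of Lemma \ref{lem 5.4} as in Proposition \ref{smoothprop1}) give $0\le\overline{\varphi}\le\overline{\phi}$, hence $\overline{\varphi}(-\infty)=0$ and $\overline{\varphi}\le K$. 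Next I would set $\underline{\varphi}:=F(\mathcal{F}(\underline{\phi}))$; applying the two monotone smoothing maps to the hypothesis $0\le\underline{\phi}\le\overline{\phi}$ preserves the order, so $0\le\underline{\varphi}\le\overline{\varphi}$, and $\underline{\varphi}$ is a genuine lower solution satisfying $\underline{\varphi}\le F\underline{\varphi}$. Because $G(\cdot,r)<0$ (Theorem \ref{the 1}) while $H\ge 0$ on the relevant range (Lemma \ref{lem 3.1}), the representation $\underline{\varphi}=-{\cal L}^{-1}H(\mathcal{F}(\underline{\phi}))$ is strictly positive as soon as $\underline{\phi}$ is a nontrivial sub solution; this positivity is the only thing I will extract from $\underline{\phi}$, and in particular I will not claim $\underline{\varphi}\in\Gamma$.

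With $\overline{\varphi}$ nondecreasing and $\underline{\varphi}$ a strictly positive lower barrier below it, I would mimic the proof of Theorem \ref{the main} with $\overline{\varphi}$ as the seed. Put $\phi_n:=F^n(\overline{\varphi})$. Since $\overline{\varphi}\ge F\overline{\varphi}$ and $F$ is monotone, the $\phi_n$ form a nonincreasing sequence of nondecreasing functions, and $\underline{\varphi}\le F^n\underline{\varphi}\le\phi_n$ gives $0<\underline{\varphi}\le\phi_n\le\overline{\varphi}$ for all $n$. The sequence therefore converges pointwise, and by the exponential decay of $G(\cdot,r)$ together with the continuity of $H$ (the ingredients of Lemma \ref{lem compact}) the limit $\phi$ is a nondecreasing fixed point of $F$, i.e. a monotone solution of Eq.(\ref{wave}) with $0<\underline{\varphi}\le\phi\le\overline{\varphi}$ and range in $[0,K]$.

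The step I expect to be the main obstacle is verifying $\phi\in\Gamma$, since $\underline{\phi}\notin\Gamma$ supplies no boundary information: domination by $\overline{\varphi}$ immediately forces $\phi(-\infty)=0$ and $\phi(+\infty)\le K$, but the right-hand value $\phi(+\infty)=K$ must be recovered from the equation itself. Here I would argue by equilibria. The monotone bounded function $\phi$ has a limit $L:=\phi(+\infty)\in(0,K]$, with $L>0$ because $\phi\ge\underline{\varphi}>0$; feeding $\phi(t)\to L$ back into Eq.(\ref{wave}) and using the exponential decay of $G(\cdot,r)$ to conclude $\phi'(t+r_1)\to 0$ and $\phi''(t)\to 0$ shows that $L$ is a steady state, i.e. $f(\hat L)=0$. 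By (H1) and the standing hypothesis that $f(u)\neq 0$ for $\hat 0<u<\hat K$, the only possibility is $L=K$, so $\phi\in\Gamma$ and $\phi$ is the desired monotone traveling wave. The delicate point throughout is exactly this asymptotic analysis, because the relaxation of the lower solution out of $\Gamma$ means that nontriviality and the correct boundary value at $+\infty$ cannot be read off from $\underline{\phi}$ but have to be forced by the dynamics.
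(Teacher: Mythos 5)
Your proposal is correct and follows essentially the route the paper intends: smooth the super/sub solutions into quasi-upper/lower solutions via $\mathcal{F}$, upgrade them to genuine upper/lower solutions via $F$ (Proposition \ref{smoothprop1}), and then run the monotone iteration of Theorem \ref{the main2}, recovering $\phi(+\infty)=K$ from the equilibrium argument of Lemma \ref{lem 3.7} and its corollary. Your explicit caveat that $\underline{\phi}$ must be nontrivial (so that $\underline{\varphi}>0$ and hence $\lim_{t\to+\infty}\phi(t)>0$) is a point the corollary's statement leaves implicit but which Theorem \ref{the main2} makes explicit via the hypothesis $\lim_{t\to+\infty}\underline{\varphi}(t)=a\neq 0$.
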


\begin{lemma}\label{lem 3.7}
Assume that $\phi(t)$ is a differentiable function such that $\phi'(t)$ is uniformly continuous
and the limit
\begin{equation}\label{lim}
\lim_{t\to+\infty} \phi(t) = a. 
\end{equation}
Then, $\lim_{t\to+\infty} \phi'(t)=0$.
\end{lemma}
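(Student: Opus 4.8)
The plan is to argue by contradiction, using the uniform continuity of $\phi'$ to convert a pointwise lower bound on $|\phi'|$ along a sequence into a lower bound on the increment of $\phi$ over intervals of fixed length, which then clashes with the Cauchy property forced by (\ref{lim}). This is the standard route to what is essentially Barbalat's lemma, and no result beyond the Fundamental Theorem of Calculus is needed.

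First I would suppose the conclusion fails, so that there exist $\epsilon>0$ and a sequence $t_n\to+\infty$ with $|\phi'(t_n)|\ge \epsilon$ for every $n$. Since $\phi'$ is uniformly continuous, I can choose $\delta>0$ (independent of $n$) such that $|\phi'(s)-\phi'(t)|<\epsilon/2$ whenever $|s-t|\le\delta$. On each interval $[t_n,t_n+\delta]$ this keeps $\phi'(s)$ within $\epsilon/2$ of $\phi'(t_n)$, so $|\phi'(s)|\ge\epsilon/2$ there and, crucially, $\phi'$ retains the sign of $\phi'(t_n)$ throughout the interval: indeed, if $\phi'(t_n)\ge\epsilon$ then $\phi'(s)>\epsilon-\epsilon/2=\epsilon/2>0$, and symmetrically in the negative case, so $\phi'$ cannot cross $0$ on $[t_n,t_n+\delta]$.

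Next I would integrate. Because $\phi'$ has constant sign on $[t_n,t_n+\delta]$, the Fundamental Theorem of Calculus gives
\[
|\phi(t_n+\delta)-\phi(t_n)|=\left|\int_{t_n}^{t_n+\delta}\phi'(s)\,ds\right|=\int_{t_n}^{t_n+\delta}|\phi'(s)|\,ds\ge \frac{\epsilon\delta}{2},
\]
a bound uniform in $n$. On the other hand, (\ref{lim}) asserts $\phi(t)\to a$, so $\phi$ satisfies the Cauchy condition at $+\infty$; since both $t_n\to+\infty$ and $t_n+\delta\to+\infty$, the quantities $\phi(t_n)$ and $\phi(t_n+\delta)$ both tend to $a$, forcing $\phi(t_n+\delta)-\phi(t_n)\to 0$. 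This contradicts the uniform lower bound $\epsilon\delta/2>0$, and the contradiction establishes $\lim_{t\to+\infty}\phi'(t)=0$.

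The only delicate point, and the step I would be most careful to justify, is the constant-sign claim on $[t_n,t_n+\delta]$, since it is precisely what lets me replace $\left|\int\phi'\right|$ by $\int|\phi'|$; the rest is routine. The role of \emph{uniform} continuity (as opposed to mere continuity) is essential here, because it is what makes $\delta$ independent of $n$: without a uniform $\delta$ the length of the intervals could shrink along the sequence and the integral bound $\epsilon\delta/2$ would degrade to zero, so the argument would collapse.
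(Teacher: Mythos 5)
Your proof is correct and takes essentially the same route as the paper's: argue by contradiction, use the uniform continuity of $\phi'$ to fix a $\delta$ independent of $n$, derive the uniform lower bound $\epsilon\delta/2$ for the increment of $\phi$ over an interval of length $\delta$ near $t_n$, and contradict the Cauchy property of $\phi$ at $+\infty$ forced by (\ref{lim}). The only cosmetic difference is in how that integral lower bound is obtained: you use the constant-sign observation to pass from $\left|\int \phi'\right|$ to $\int|\phi'|$, whereas the paper writes $\int_{t_n-\delta/2}^{t_n+\delta/2}\phi'(s)\,ds=\delta\phi'(t_n)+\int_{t_n-\delta/2}^{t_n+\delta/2}\bigl(\phi'(s)-\phi'(t_n)\bigr)\,ds$ and applies the reverse triangle inequality.
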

\begin{proof}
Assuming to the contrary that $\lim_{t\to+\infty}\phi'(t)\not=0$. Then, there exists a sequence $\{t_n\}\to\infty$ such that $\inf_{n\in\N}|\phi'(t_n)| >\epsilon$.
As $\phi'(\cdot )$ is uniformly continuous, for each positive constant $\epsilon>0$, there exists a positive $\delta$ such that if $|t-s|<\delta$, then, 
\begin{equation}
|\phi'(t)-\phi'(s)| \le \frac{\epsilon}{2}.
\end{equation}
By (\ref{lim}), for each positive $\epsilon$ there exists a large constant $N$ such that 
\begin{equation}\label{3.12}
|\phi(t)-\phi(s)| \le \frac{\delta \epsilon}{4}
\end{equation}
for all $t,s\ge N$. 
On the other hand, we have
\begin{align*}
|\phi(t_n+\delta/2)-\phi(t_n-\delta/2) | &= |\int^{t_n+\delta/2}_{t_n-\delta/2} \phi'(s)ds | \\
& = |\int^{t_n+\delta/2}_{t_n-\delta/2} \phi'(t_n)ds + \int^{t_n+\delta/2}_{t_n-\delta/2} (\phi'(s)-\phi'(t_n))ds | \\
&=|\delta \phi'(t_n) +  \int^{t_n+\delta/2}_{t_n-\delta/2} (\phi'(s)-\phi'(t_n))ds | \\
&=|\delta \phi'(t_n) |-| \int^{t_n+\delta/2}_{t_n-\delta/2} (\phi'(s)-\phi'(t_n))ds | \\
&\ge \delta\epsilon - \delta \epsilon  /2                        \\
 & =\delta \epsilon  /2   .
\end{align*}
This contradicts (\ref{3.12}). That is $\lim_{t\to+\infty}\phi'(t)=0$.
\end{proof}
\begin{corollary}
Assume that $\phi$ is a solution of Eq.(\ref{wave}) such that
\begin{equation}
\sup_{t >0} |\phi''(t)| <\infty 
\end{equation}
and the limit
$$
\lim_{t\to+\infty} \phi(t) = a. 
$$
Then, $f(\hat a)=0$, where $\hat a$ is the constant function $\varphi(\theta)=a$ for all $\theta \in [-\tau_2,0]$.
\end{corollary}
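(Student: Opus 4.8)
The plan is to solve the wave equation (\ref{wave}) for the second-order term, read off the limit of $\phi''$ as $\xi\to+\infty$, and then show that this limit can only be zero; since that limit will turn out to equal $-f(\hat a)/D$, this forces $f(\hat a)=0$. Rearranging (\ref{wave}) gives
$$
D\phi''(\xi) = c\phi'(\xi+r_1) - f_c(\phi_{\xi+r_1}),
$$
so the whole argument reduces to controlling the two terms on the right as $\xi\to+\infty$.

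First I would note that the hypothesis $\sup_{t>0}|\phi''(t)|<\infty$ makes $\phi'$ Lipschitz, hence uniformly continuous, on $(0,\infty)$. Together with $\lim_{t\to+\infty}\phi(t)=a$, this is exactly what Lemma \ref{lem 3.7} requires, and it yields $\lim_{\xi\to+\infty}\phi'(\xi)=0$; in particular the term $c\phi'(\xi+r_1)\to 0$. For the reaction term, I would observe that the history segment $\phi_{\xi+r_1}(\theta)=\phi(\xi+r_1+\theta)$ converges, uniformly in $\theta\in[-c\tau_2,0]$, to the constant profile $a$ as $\xi\to+\infty$, because $\xi+r_1+\theta\to+\infty$ uniformly on the compact delay interval. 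Thus $\phi_{\xi+r_1}\to \hat a$ in $\X_c$, and since $f$ (hence $f_c$) is continuous, $f_c(\phi_{\xi+r_1})\to f_c(\hat a)=f(\hat a)$, the last equality being a direct unwinding of the definitions $f_c(\psi)=f(\psi^c)$, $\psi^c(\theta)=\psi(c\theta)$ applied to a constant profile.

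Combining these two limits, the right-hand side of the displayed identity converges, so $\lim_{\xi\to+\infty}\phi''(\xi)=-f(\hat a)/D$ exists. The hard part will be the final step, ruling out a nonzero value: I cannot simply apply Lemma \ref{lem 3.7} a second time to $\phi'$, since uniform continuity of $\phi''$ is not among the hypotheses. Instead I would argue directly by integration. If $\phi''(\xi)\to L\neq 0$, say $L>0$, then $\phi''(\xi)>L/2$ for all sufficiently large $\xi$, whence $\phi'(\xi)=\phi'(N)+\int_N^\xi \phi''(s)\,ds\to+\infty$, contradicting $\phi'(\xi)\to 0$; the case $L<0$ is symmetric. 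Therefore $L=0$, and since $L=-f(\hat a)/D$, we conclude $f(\hat a)=0$.
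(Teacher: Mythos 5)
Your proposal is correct, and it diverges from the paper's argument precisely at the step you flagged. The paper first applies Lemma \ref{lem 3.7} to $\phi$ to get $\phi'(t)\to 0$, then asserts that $\varphi:=\phi'$ "satisfies all conditions of Lemma \ref{lem 3.7}" and applies the lemma a second time to conclude $\phi''(t)\to 0$, after which the equation forces $f_c(\phi_t)\to 0$ and continuity gives $f_c(\hat a)=f(\hat a)=0$. That second application requires uniform continuity of $\phi''$, which is not among the corollary's hypotheses and is not verified in the paper (one could try to extract it from the equation itself, since $\phi''$ is a combination of $\phi'$ and $f_c(\phi_\cdot)$, but the paper does not do this). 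You instead run the logic in the opposite order: you use continuity of $f_c$ and the uniform convergence of the history segments to show $f_c(\phi_{\xi+r_1})\to f(\hat a)$ first, deduce from the equation that $\phi''(\xi)\to -f(\hat a)/D$, and then kill a nonzero limit by the elementary integration argument ($\phi''\to L\neq 0$ would force $\phi'\to\pm\infty$). This buys you a proof that uses only the stated hypotheses and a single invocation of Lemma \ref{lem 3.7}, at the cost of a slightly longer final step; the conclusion and all intermediate limits agree with the paper's.
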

\begin{proof}
First, by Lemma \ref{lem 3.7}, $\lim_{t\to+\infty} \phi'(t)=0$. Therefore, the function $\varphi:=\phi'$ satisfies all conditions of Lemma \ref{lem 3.7}, so
$\lim_{t\to+\infty} \varphi'(t)=\lim_{t\to+\infty} \phi''(t)=0$. Subsequently
$$
\lim_{t\to+\infty} f^c(\phi_t)=0.
$$
Since $f$ is Lipschitz continuous, 
$$
f^c( \lim_{t\to+\infty} \phi_t)=f^c(\hat a)=0.
$$
Finally, this yields that $f(\hat a)=0.$ 
\end{proof}

The following improves Theorem \ref{the main}.
\begin{theorem}\label{the main2}
Under the standing assumptions (H1), (H2), (H3), if there is an upper $\overline{\varphi} \in \Gamma$ and a lower solutions $\underline{\varphi}$
that is not necessarily in $\Gamma$ of Eq.(\ref{wave}) such that for all $ \ t\in \R $
$$
0\le  \underline{\varphi}(t)\le \overline{\varphi}(t) 
$$
and 
$$
\lim_{t\to+\infty} \underline{\varphi}(t)=a \not=0.
$$
Then, there exists a monotone traveling wave solution $\phi$ to Eq.(\ref{wave}).
\end{theorem}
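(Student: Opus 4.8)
The plan is to run the monotone iteration of Theorem~\ref{the main} starting from the upper solution alone, and then to use the Corollary following Lemma~\ref{lem 3.7} to recover the boundary value at $+\infty$ that is no longer supplied by a lower solution lying in $\Gamma$. First I would set $\phi_n := F^n(\overline{\varphi})$, where $F=-{\cal L}^{-1}H$. Since $\overline{\varphi}\in\Gamma$ is an upper solution, Lemma~\ref{lem 5.4} gives $F\overline{\varphi}\le\overline{\varphi}$, and the monotonicity of $F$ (from Theorem~\ref{the 1} for $-{\cal L}^{-1}$ and Lemma~\ref{lem 3.1} for $H$) yields a nonincreasing sequence $\phi_0\ge\phi_1\ge\cdots$. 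Because $\underline{\varphi}\le\overline{\varphi}$ is a lower solution, the same two facts give $F\underline{\varphi}\ge\underline{\varphi}$ and hence, by induction, $\phi_n\ge\underline{\varphi}$ for every $n$. Thus $\underline{\varphi}\le\phi_n\le\overline{\varphi}$ and $\phi_n(t)$ decreases to a pointwise limit $\phi(t)$ with $\underline{\varphi}\le\phi\le\overline{\varphi}$. Each $\phi_n$ is nondecreasing in $t$ (the commutation of $F$ with translations used in the proof of Theorem~\ref{the main}), so $\phi$ is nondecreasing as well. The key difference from Theorem~\ref{the main} is that I can no longer conclude $\phi_n\in\Gamma$, since $\underline{\varphi}$ need not reach $K$ at $+\infty$; the iterates only satisfy $a\le\lim_{t\to+\infty}\phi_n(t)\le K$.

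Next I would pass to the limit in $\phi_{n+1}=F\phi_n$. Since the $\phi_n$ take values in $[0,K]$ and $f$ is Lipschitz, the functions $H(\phi_n)$ are uniformly bounded; the convolution estimates of Lemma~\ref{lem compact}, which use only the exponential decay of $G$, then show that $\{\phi_n\}$ is uniformly bounded and equicontinuous. Monotone pointwise convergence together with equicontinuity upgrades to locally uniform convergence, so $\phi$ is continuous and, by the continuity of $H$ and the dominated convergence theorem, $\phi=F\phi$. Unpacking $\phi=-{\cal L}^{-1}H\phi$ shows that $\phi$ solves Eq.~(\ref{wave}); moreover, by Theorem~\ref{the per2} in its classical form $\phi$ is of class $C^2$ with $\phi,\phi'$ bounded, and from $D\phi''=c\phi'(\cdot+r_1)-f_c(\phi_{\cdot+r_1})$ we obtain $\sup_{t\in\R}|\phi''(t)|<\infty$. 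The bound $0\le\phi\le\overline{\varphi}$ forces $\lim_{t\to-\infty}\phi(t)=0$, while $\phi\ge\underline{\varphi}$ forces $\ell:=\lim_{t\to+\infty}\phi(t)\ge a>0$.

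The hard part — and the whole point of the theorem — is to show $\ell=K$ without a lower solution that already reaches $K$. Here I would invoke the Corollary following Lemma~\ref{lem 3.7}: $\phi$ is a solution of Eq.~(\ref{wave}) with $\sup_{t>0}|\phi''(t)|<\infty$ and $\lim_{t\to+\infty}\phi(t)=\ell$, hence $f(\hat\ell)=0$. By the standing hypothesis on $f$, namely $f(\hat0)=f(\hat K)=0$ together with $f(u)\neq0$ for all $\hat0<u<\hat K$, the only constant in $[0,K]$ strictly between $0$ and $K$ is excluded, so $\ell\in\{0,K\}$; since $\ell\ge a>0$ this forces $\ell=K$. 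Consequently $\phi$ is nondecreasing with $\phi(-\infty)=0$ and $\phi(+\infty)=K$, i.e.\ $\phi\in\Gamma$, and it solves Eq.~(\ref{wave}), which is the desired monotone traveling wave. The main obstacle is precisely this last upgrade of the limit from $a$ to $K$: the squeeze argument of Theorem~\ref{the main} is unavailable because $\underline{\varphi}\notin\Gamma$, and it is the hypothesis $a\neq0$ combined with the structural condition (H1), routed through the Barbalat-type Corollary, that rules out $\phi(+\infty)$ settling at an intermediate value.
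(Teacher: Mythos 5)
Your proposal is correct and follows essentially the same route as the paper: iterate $F$ from the upper solution, use the compactness of Lemma \ref{lem compact} plus monotonicity to extract a limit $\phi$ with $\phi=F\phi$, and then rule out an intermediate value of $\lim_{t\to+\infty}\phi(t)$ because that limit must be an equilibrium of $f$ and is bounded below by $a>0$. You simply make explicit the step the paper compresses into ``as there is no equilibrium between $0$ and $K$,'' namely the appeal to the corollary of Lemma \ref{lem 3.7}.
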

\begin{proof}
We follow the proof of Theorem \ref{the main} by considering the operator $F$ in $\Gamma$. Since the set $\{ \phi_n:=F^n\overline{\varphi} \}$ is precompact, it has a convergent subsequence. Due to the monotone property of this sequence $\phi_n$, the sequence itself is convergent, say, to $\phi_0$. Obviously, $\phi_0$ is nondecreasing and $\lim_{t\to-\infty}\phi_0(t)=0$ and the limit $\lim_{t\to+\infty}\phi_0(t)$ exists, say, equals $a\in [0,K]$. As $\phi_0(t)\ge \underline{\varphi} (t)$, $a>0$. As there is no equilibrium between $0$ and $K$, this follows that $a=K$. Hence, $\phi_0$ is a monotone wave solution of Eq.(\ref{wave}).
\end{proof}
\begin{remark}
The theory we presented above is for one dimensional systems. However, it can be easily extended to multi-dimensional systems where the diffusion is diagonal, that is, the multi-dimensional systems are of the form
\begin{equation}
\begin{cases}
\frac{\partial u_1(x,t)}{\partial t}=D_1\frac{\partial^2 u_1(x,t-\tau_1)}{\partial x^2} +f_1(u_t)\\
\frac{\partial u_2(x,t)}{\partial t}=D_2\frac{\partial^2 u_2(x,t-\tau_2)}{\partial x^2} +f_2(u_t)\\
\cdots  \\
\frac{\partial u_n(x,t)}{\partial t}=D_n\frac{\partial^2 u_n(x,t-\tau_n)}{\partial x^2} +f_n(u_t),
\end{cases}
\end{equation}
where $\tau_i, i=i,2,\cdots ,n$ are sufficiently small positive constants, $D_i,i=1,2,\cdots,n$ are positive constants, $u=(u_1,\cdots ,u_n)^T$, $u_t(\theta ):= u(t+\theta )$, $\theta \in [-\tau, 0]$ with given positive $\tau$.
\end{remark}

\section{Applications}\label{applications}

\subsection{Belousov-Zhabotinskii equations}
In this section we consider the
existence of traveling waves to Belousov-Zhabotinskii Equations with delay in both diffusion and reaction terms
\begin{equation}\label{pde-bz}
\begin{cases}
\frac{\partial }{\partial t}u(x,t) = \frac{\partial^2}{\partial
x^2} u(x,t-\tau_1) +u(x,t) [ 1-u(x,t) -r v(x,t-\tau_2)] ;\\
\frac{\partial }{\partial t}v(x,t) = \frac{\partial^2}{\partial x^2}
v(x,t) -bu(x,t)v(x,t) ,
\end{cases}
\end{equation}
where $r,b,  \tau_1,  \tau_2  $ are positive constants, $u$ and $v$ are scalar functions.
As shown in \cite{wuzou}, the function $f(\phi):= (f_1(\phi),f_2(\phi))^T$ defined as
\begin{align}
f_1(\phi)& := \psi_1(0)[s-\phi_1(0)+r\phi_2(-\tau_2)]\\
f_2(\phi)& := b\phi_1(0)[1-\phi(0)],
\end{align}
where $s:=1-r$, $\phi \in C[-c\tau_2,\R^2)$ satisfies
$$
f_c(\phi )-f_c(\psi )+\beta [\phi(0)-\psi(0)] \ge 0,
$$
where $\beta =diag(\beta_1,\beta_2)$ with $\beta_1\ge 2-s$ and $\beta_2\ge b$. Therefore, using the theory in the previous section we can find quasi-upper and quasi-lower solutions of the Belousov-Zhabotinskii model. 

\medskip
The associated wave equation is of the form
\begin{equation}\label{waveBZ}
\begin{cases}
\varphi_{1}^{\prime\prime}(t)-c\varphi_{1}^{\prime}(t+r_1)+\varphi_{1}(t+r_1)\left(
(1-r)-\varphi_{1}(t+r_1)-r\varphi_{2}(t+r_1-r_2)\right)  =0\\
\varphi_{2}^{\prime\prime}(t)-c\varphi_{2}^{\prime}(t+r_1)+b\varphi_{1}(t+r_1)\left(
1-\varphi_{2}(t+r_1)\right)  =0.
\end{cases},
\end{equation}
where $r_1:=c\tau_1, r_2:=c\tau_2$.

\subsubsection{Quasi-upper solutions}
Define the numbers $\lambda_{0}$ and $\mu_{0}$ as
\[
\lambda_{0}=\frac{c+\sqrt{c^{2}-4}}{2},\;\;\;\mu_{0}=\frac{c+\sqrt{c^{2}-4b}
}{2},
\]
which are the  roots of the characteristic equations
\begin{align}
\lambda^{2}-c\lambda+1&=0, \label{40}\\
\mu^{2}-c\mu+b &=0 \label{41},
\end{align}
respectively. Observe that since
$
1< b
$
then,
$\lambda_{0} > \mu_{0}.
$

\begin{claim}\label{claim 4}
Let $c>2$ and $U$ be an open strip $\{ z\in \C | \lambda_0-\epsilon < \Re z < \lambda_0+\epsilon\}$ so that it does not include the other root of (\ref{40}) in it. Then, for sufficiently small $r_1$ there exists only a single root $\lambda_1(r_1)$ of the equation
\begin{equation}\label{l1}
\lambda^2-c\lambda e^{r_1\lambda}+e^{r_1\lambda }=0.
\end{equation}
in $U$ that depends continuously on $r_1$. Moreover,  $\lambda_1(r_1)$ is real and
\begin{equation}
\lim_{s\to 0} \lambda_1(r_1) =\lambda_0 .
\end{equation}
\end{claim}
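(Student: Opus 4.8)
The plan is to mirror the Rouché-based argument of Proposition \ref{pro 1}, adapted to the characteristic function in (\ref{l1}). First I would split the left-hand side of (\ref{l1}) as $\gamma(z)=\alpha(z)+\beta(z)$, where
\[
\beta(z):=z^2-cz+1,\qquad \alpha(z):=(1-cz)(e^{r_1z}-1),
\]
so that $\alpha(z)+\beta(z)=z^2-cze^{r_1z}+e^{r_1z}$ is exactly the function whose zeros we study, and $\beta$ is the undelayed polynomial (\ref{40}). Here $\beta$ factors as $\beta(z)=(z-\lambda_0)(z-\lambda_0')$ with $\lambda_0'=(c-\sqrt{c^2-4})/2$; both roots are real and positive since $c>2$, and by the hypothesis $\epsilon<\sqrt{c^2-4}$ the strip $U$ contains $\lambda_0$ but not $\lambda_0'$. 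The whole point of this splitting is that $\alpha$ carries the factor $e^{r_1z}-1$, which is small when $r_1$ is small.

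Then I would run Rouché's Theorem \ref{the rouche} on the boundary of the rectangle $\Xi_R:=\{z\in\C:\ \lambda_0-\epsilon\le\Re z\le\lambda_0+\epsilon,\ |\Im z|\le R\}$, aiming to prove $|\alpha(z)|<|\beta(z)|$ on $\partial\Xi_R$. The estimates divide into two regimes. On the far part of the contour --- the horizontal edges $\Im z=\pm R$ and the portions of the vertical edges with $|\Im z|\ge R_0$ --- I would use that $|\beta(z)|\sim|z|^2$ grows quadratically while $|\alpha(z)|\le|1-cz|\,(e^{r_1\Re z}+1)$ grows only linearly, the factor $e^{r_1\Re z}+1$ being bounded because $\Re z$ is confined to the strip; hence $|\alpha|/|\beta|\to 0$ as $|z|\to\infty$, uniformly for $r_1\in[0,1]$, and a single large $R$ handles these pieces for all small $r_1$. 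On the remaining bounded part of the vertical edges I would instead use that $z$ stays in a fixed compact set on which $|\beta|$ is bounded below by a positive constant (since $z$ keeps a definite distance from $\lambda_0$ and $\lambda_0'$), while $|\alpha|\le(\max|1-cz|)\max|e^{r_1z}-1|\to 0$ as $r_1\to 0$; so $|\alpha|<|\beta|$ there once $r_1$ is small enough.

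With $|\alpha|<|\beta|$ on all of $\partial\Xi_R$, Rouché gives that $\gamma$ and $\beta$ have the same number of zeros inside $\Xi_R$, namely the single zero $\lambda_0$. Since the far-field estimate holds for every $R\ge R_0$, letting $R\to\infty$ shows $\gamma$ has exactly one root $\lambda_1(r_1)$ in the whole strip $U$. To see this root is real I would note that $P(\lambda)=\lambda^2-c\lambda e^{r_1\lambda}+e^{r_1\lambda}$ has real coefficients, so its zeros are symmetric under conjugation; as $U$ is itself symmetric under $z\mapsto\bar z$, the conjugate $\overline{\lambda_1(r_1)}$ is again a root in $U$, and uniqueness forces $\lambda_1(r_1)=\overline{\lambda_1(r_1)}\in\R$. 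Finally, continuous dependence and the limit follow from the Implicit Function Theorem applied to $P(\lambda,r_1)$ at $(\lambda_0,0)$: there $P=0$ and $\partial_\lambda P(\lambda_0,0)=2\lambda_0-c=\sqrt{c^2-4}\neq 0$, so a unique $C^1$ branch passes through $\lambda_0$; by uniqueness in $U$ it must coincide with $\lambda_1(r_1)$, whence $\lambda_1(r_1)\to\lambda_0$ as $r_1\to 0$.

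The main obstacle I anticipate is the same one that makes Part (iii) of Proposition \ref{pro 1} delicate: arranging the two regimes so they are compatible --- fixing one radius $R$ large enough that the far-field ratio is below $1$ uniformly for all small $r_1$, and only then shrinking $r_1$ to control the bounded central part --- together with securing a clean positive lower bound for $|\beta|$ on the vertical edges, which relies on $\epsilon$ being chosen below $\sqrt{c^2-4}$ so that neither root of $\beta$ touches those edges.
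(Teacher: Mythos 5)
Your proposal is correct and follows essentially the same route as the paper, which simply refers the reader back to the Rouch\'e rectangle argument of Part (iii) of Proposition \ref{pro 1} and then uses conjugation symmetry plus uniqueness for reality and the Implicit Function Theorem for continuity. In fact your write-up supplies the details (the explicit splitting $\beta(z)=z^2-cz+1$, $\alpha(z)=(1-cz)(e^{r_1z}-1)$, the two-regime estimate on $\partial\Xi_R$, and the nondegeneracy $\partial_\lambda P(\lambda_0,0)=\sqrt{c^2-4}\neq 0$) that the paper leaves implicit.
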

\begin{proof}
The proof can be done in the same manner as in the proof of Part (iii) of Proposition \ref{pro 1} using the Rouch\'e Theorem. Due to the uniqueness of the root we see that $\lambda_1$ must be the same as $\overline{\lambda_1}$. That is, $\lambda_1$ is real.
\end{proof}
\begin{claim}\label{claim 5}
Let $1<b, 2\sqrt{b}<c$ and $V$ be an open strip $\{ z\in \C | \mu_0-\epsilon < \Re z < \mu_0+\epsilon\}$ so that it does not include the other root of (\ref{41}) in it. Then, for sufficiently small $r_1$ there exists only a single root $\mu_1(r_1)$ of the equation
\begin{equation}\label{m1}
\mu^2-c\mu e^{r_1\mu}+be^{r_1\mu }=0.
\end{equation}
in $V$ that depends continuously on $r_1$. Moreover, $\mu_1(r_1)$ is real and
\begin{equation}
\lim_{s\to 0} \mu_1(r_1) =\mu_0 .
\end{equation}
\end{claim}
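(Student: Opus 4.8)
The plan is to follow, almost verbatim, the Rouché-theoretic argument used for Part (iii) of Proposition \ref{pro 1} and for Claim \ref{claim 4}, now applied to Eq.(\ref{m1}). I introduce the unperturbed polynomial $\beta(z):=z^2-cz+b$, whose zeros are exactly $\mu_0$ and $(c-\sqrt{c^2-4b})/2$, together with the perturbation $\alpha(z):=\gamma(z)-\beta(z)=(b-cz)(e^{r_1z}-1)$, where $\gamma(z):=z^2-cze^{r_1z}+be^{r_1z}$ is the full characteristic function of Eq.(\ref{m1}). The goal is to count the zeros of $\gamma=\alpha+\beta$ inside a tall rectangle $\Xi_R$ whose two vertical edges lie on the lines $\Re z=\mu_0\pm\epsilon$ (chosen, as in the statement, so that $V$, and hence $\Xi_R$, excludes the second root of (\ref{41})) and whose horizontal edges lie on $\Im z=\pm R$.

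The key step is the boundary estimate $|\alpha(z)|<|\beta(z)|$ on $\partial\Xi_R$. Since $\mu_0>0$, every $z\in\Xi_R$ satisfies $0<\Re z\le\mu_0+\epsilon$, so $|e^{r_1z}|=e^{r_1\Re z}\le e^{r_1(\mu_0+\epsilon)}\to 1$ as $r_1\downarrow 0$; consequently $|e^{r_1z}-1|\le 1+e^{r_1(\mu_0+\epsilon)}\le 3$ for all small $r_1$, uniformly in $z\in\Xi_R$, which gives $|\alpha(z)|\le 3|b-cz|$. On the horizontal edges $|z|\ge R$, so $|\beta(z)|$ grows quadratically while $|\alpha(z)|$ grows only linearly in $|z|$; hence I first fix $R$ large, independently of the small parameter $r_1$, so that the ratio falls below $1$ there. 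On the vertical edges I split into the compact part $|\Im z|\le \kappa$ and the tail $\kappa\le|\Im z|\le R$: on the tail the same quadratic-versus-linear comparison applies, while on the compact part $\beta$ has no zero (the edges avoid both roots of (\ref{41})), so $|\beta|$ is bounded below, whereas $|\alpha(z)|=|b-cz|\,|e^{r_1z}-1|\to 0$ uniformly as $r_1\downarrow 0$. Shrinking $r_1$ then secures $|\alpha|<|\beta|$ on all of $\partial\Xi_R$.

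With this inequality in hand, Rouch\'e's Theorem \ref{the rouche} shows that $\gamma$ and $\beta$ have the same number of zeros inside $\Xi_R$, namely the single zero $\mu_0$; letting $R\to\infty$ yields a unique simple root $\mu_1(r_1)$ of Eq.(\ref{m1}) in $V$. Continuous dependence on $r_1$, and the limit $\mu_1(r_1)\to\mu_0$ as $r_1\downarrow 0$, follow from the Implicit Function Theorem applied to $\gamma(z,r_1)=0$ at $(z,r_1)=(\mu_0,0)$, where $\partial_z\gamma(\mu_0,0)=2\mu_0-c=\sqrt{c^2-4b}\ne 0$ precisely because $c>2\sqrt{b}$. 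Finally, reality of $\mu_1$ is forced by uniqueness exactly as in Claim \ref{claim 4}: since $b,c,r_1$ are real, $\gamma(\bar z)=\overline{\gamma(z)}$, so $\overline{\mu_1}$ is also a root lying in the conjugation-symmetric strip $V$, whence $\mu_1=\overline{\mu_1}$ and $\mu_1$ is real.

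I expect the main obstacle to be the bookkeeping of the quantifier order in the boundary estimate: one must fix the large radius $R$ using only the uniform bound $|e^{r_1z}-1|\le 3$ before shrinking $r_1$ to control the compact portion of the vertical edges. This is exactly the delicate point already navigated in Part (iii) of Proposition \ref{pro 1}, and the condition $c>2\sqrt{b}$ enters only to guarantee that $\mu_0$ is real and simple, which keeps $\partial_z\gamma(\mu_0,0)\ne 0$ so that the Implicit Function Theorem is available.
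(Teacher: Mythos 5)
Your proposal is correct and follows essentially the same route as the paper: the paper's own proof of this claim is a one-line reference to the Rouch\'e argument of Part (iii) of Proposition \ref{pro 1} plus the conjugation-symmetry observation for reality, and your write-up fills in exactly that argument (same decomposition $\gamma=\alpha+\beta$ with $\alpha(z)=(b-cz)(e^{r_1z}-1)$, same rectangle contour, same Implicit Function Theorem step with $\partial_z\gamma(\mu_0,0)=2\mu_0-c=\sqrt{c^2-4b}\neq 0$). Your explicit handling of the quantifier order — fixing $R$ via the uniform bound on $|e^{r_1z}-1|$ before shrinking $r_1$ on the compact part of the vertical edges — is in fact more careful than the paper's sketch.
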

\begin{proof}
The proof can be done in the same manner as in the proof of Part (iii) of Proposition \ref{pro 1} using the Rouch\'e Theorem. Due to the uniqueness of the root we see that $\mu_1$ must be the same as $\overline{\mu_1}$. That is, $\mu_1$ is real.
\end{proof}
Note that as $\lambda_{0}> \mu_{0}$ for sufficiently small $r_1$ we have $0<\mu_1 <\lambda_1$. Let us define functions $\varphi_{1}$ and $\varphi_{2}$ as follows:
\[
\varphi_{1}(t):=\left\{
\begin{array}
[c]{l}%
\frac{1}{2}e^{\lambda_{1}t},\;\;\;\;\;\;\quad t\leq0,\\
1-\frac{1}{2}e^{-\lambda_{1}t},\quad t>0
\end{array}
\right.  \;\;\;\;\varphi_{2}(t):=\left\{
\begin{array}
[c]{l}%
\frac{1}{2}e^{\mu_{1}t},\;\;\;\;\;\quad t\leq0,\\
1-\frac{1}{2}e^{-\mu_{1}t},\quad t>0
\end{array}
\right.
\]
observe that  for sufficiently small $r_1$, $0<\varphi_{1}(t)<1\;$and similarly for
$0<\varphi_{2}(t)<1.$

First, it is easily seen that
\begin{align*}
\varphi_{1}^{\prime}(t) &  =\left\{
\begin{array}
[c]{l}%
\frac{\lambda_{1}}{2}e^{\lambda_{1}},\quad t\leq0,\\
\frac{\lambda_{1}}{2}e^{-\lambda_{1}t},\quad t>0
\end{array}
\right.  ,\quad\varphi_{1}^{\prime\prime}(t)=\left\{
\begin{array}
[c]{l}%
\frac{\lambda_{1}^{2}}{2}e^{\lambda_{1}t},\quad t\leq0,\\
\frac{-\lambda_{1}^{2}}{2}e^{-\lambda_{1}t},\quad t>0
\end{array}
\right.  \\
\varphi_{2}^{\prime}(t) &  =\left\{
\begin{array}
[c]{l}%
\frac{\mu_{1}}{2}e^{\mu_{1}t},\quad t\leq0,\\
\frac{\mu_{1}}{2}e^{-\mu_{1}t},\quad t>0
\end{array}
\right.  ,\quad\varphi_{2}^{\prime\prime}(t)=\left\{
\begin{array}
[c]{l}%
\frac{\mu_{1}^{2}}{2}e^{\mu_{1}t},\quad t\leq0,\\
\frac{-\mu_{1}^{2}}{2}e^{-\mu_{1}t},\quad t>0 .
\end{array}
\right.
\end{align*}
Note that $\varphi_1', \varphi_2'$ are both continuous and bounded on $\R$ and $\varphi_1'', \varphi_2''$ exist and are continuous everywhere and bounded except for $t=0$.

\begin{claim}\label{claim 6}
For sufficiently small $r_1$ and $c>2\sqrt{b}$, the vector function $(\varphi_1 ,\varphi_2)^T$ is a quasi-upper solution of (\ref{waveBZ}).
\end{claim}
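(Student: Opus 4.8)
The plan is to verify directly the two inequalities in the definition of a quasi-upper solution: writing $N_1(t)$ and $N_2(t)$ for the left-hand sides of the two lines of (\ref{waveBZ}) (here the diffusion coefficient is $D=1$), I must show $N_1(t)\le 0$ and $N_2(t)\le 0$ for almost every $t\in\R$, where
\[
N_1(t)=\varphi_1''(t)-c\varphi_1'(t+r_1)+\varphi_1(t+r_1)\big[(1-r)-\varphi_1(t+r_1)-r\varphi_2(t+r_1-r_2)\big].
\]
The required regularity is immediate from the explicit formulas: $\varphi_1,\varphi_2\in C^1(\R)$ with $\varphi_i,\varphi_i'$ bounded, while $\varphi_i''$ exists and is bounded off the single point $t=0$, where it has a jump, so $\varphi_i''\in L^\infty$ and is locally integrable. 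To handle the inequalities I would partition $\R$ according to the signs of $t$ and $t+r_1$, since the branch used for $\varphi_i''(t)$ switches at $t=0$ while the branch used for $\varphi_i(t+r_1),\varphi_i'(t+r_1)$ switches at $t=-r_1$; in the coupling terms only $\varphi_2\ge 0$ (resp. $\varphi_1\ge 0$) is used, so the delay $r_2$ creates no extra branching.

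On the two pure regions $t\le -r_1$ and $t\ge 0$ the computation collapses cleanly. For the first component on $t\le -r_1$ I would set $D:=\varphi_1(t+r_1)=\tfrac12 e^{\lambda_1(t+r_1)}$, so that $\tfrac12 e^{\lambda_1 t}=De^{-\lambda_1 r_1}$, and rewrite
\[
N_1(t)=D\big[\lambda_1^2 e^{-\lambda_1 r_1}-c\lambda_1+(1-r)\big]-D^2-rD\,\varphi_2(t+r_1-r_2).
\]
By Claim \ref{claim 4}, $\lambda_1$ solves (\ref{l1}), i.e. $\lambda_1^2=e^{r_1\lambda_1}(c\lambda_1-1)$, whence the bracket equals exactly $-r$ and $N_1=-rD-D^2-rD\varphi_2\le 0$. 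On $t\ge0$ I would instead set $E:=1-\varphi_1(t+r_1)=\tfrac12 e^{-\lambda_1(t+r_1)}$ and obtain $N_1(t)=E\big[-\lambda_1^2 e^{\lambda_1 r_1}-c\lambda_1+1\big]-E^2-r\varphi_1(t+r_1)(1+\varphi_2)$; the bracket tends to $-\lambda_0^2-c\lambda_0+1=-2\lambda_0^2<0$ as $r_1\downarrow0$ (using $\lambda_0^2-c\lambda_0+1=0$ from (\ref{40})), so it is negative for small $r_1$ and again $N_1\le0$. The second component is treated the same way via (\ref{m1}): on $t\ge 0$ the bracket becomes $-\mu_1^2 e^{\mu_1 r_1}-c\mu_1+b\to -2\mu_0^2<0$, while on $t\le -r_1$ the identity $\mu_1^2=e^{r_1\mu_1}(c\mu_1-b)$ reduces $N_2$ to $b\big[-\tfrac12 e^{\mu_1(t+r_1)}+\tfrac12 e^{\lambda_1(t+r_1)}(1-\varphi_2(t+r_1))\big]$, which is $\le 0$ because $1-\varphi_2\le 1$ and, since $0<\mu_1<\lambda_1$ and $t+r_1\le0$, one has $e^{\lambda_1(t+r_1)}\le e^{\mu_1(t+r_1)}$.

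The remaining work, and the main obstacle, is the transition strip $-r_1<t\le 0$. There $\varphi_i''(t)$ is taken from the convex branch $\tfrac{\lambda_1^2}{2}e^{\lambda_1 t}>0$ (resp. with $\mu_1$) while $\varphi_i(t+r_1)$ already sits on the upper branch, so no single characteristic identity produces a cancellation and the second derivative enters with the ``wrong'' (positive) sign. My plan is to exploit that this strip has width $r_1\to 0$: on it every quantity entering $N_i$ is a continuous function of $(t,r_1)$ converging, as $r_1\downarrow 0$, uniformly to its value at $t=0$. Evaluating the limiting expression there gives, for the first component, $\tfrac{\lambda_0}{2}(\lambda_0-c)+\tfrac14-\tfrac r2(1+\varphi_2)=-\tfrac14-\tfrac r2(1+\varphi_2)<0$ (using $\lambda_0(\lambda_0-c)=-1$), and likewise a strictly negative constant for $N_2$. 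Hence for all sufficiently small $r_1$ the quantities $N_1,N_2$ remain strictly negative on the strip by continuity, completing the verification. I expect this continuity estimate on the shrinking transition strip, rather than the pure-region algebra, to be the delicate point; the hypotheses $1<b$ and $c>2\sqrt{b}$ are used throughout to ensure, via Claims \ref{claim 4} and \ref{claim 5}, that $\lambda_0,\mu_0$ and their perturbations $\lambda_1,\mu_1$ are real and satisfy $0<\mu_1<\lambda_1$.
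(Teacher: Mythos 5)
Your proposal is correct and follows essentially the same strategy as the paper: split into the regions $t\le -r_1$, $-r_1<t\le 0$, $t>0$, use the perturbed characteristic identities (\ref{l1}) and (\ref{m1}) from Claims \ref{claim 4} and \ref{claim 5} to collapse the linear part on the pure regions, and handle the shrinking transition strip (and the residual brackets for $t>0$) by a continuity/limit argument as $r_1\downarrow 0$. Your factorizations through $\varphi_1(t+r_1)$ and $1-\varphi_1(t+r_1)$ are a slightly cleaner bookkeeping than the paper's $\cosh/\sinh$ regrouping, but the underlying computation is the same.
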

\begin{proof}
{\bf Case $t\le - r_1 $}: 
Plugging these functions into the first equation in (\ref{waveBZ}) we have
for $t+r_1 \leq0$
\begin{align*}
&  \left[
\varphi_{1}^{\prime\prime}(t)-c\varphi_{1}^{\prime}(t+r_1)+\varphi
_{1}(t+r_1)\right]  -r\varphi_{1}(t+r_1)(1-\varphi_{2}(t+r_1-r_2))-\varphi_{1}^{2}(t+r_1)\\
&  =\left[  \frac{\lambda_{1}^{2}}{2}e^{\lambda_{1}t}-c\frac{\lambda_{1}} 
{2}e^{\lambda_{1}(t+r_1)}+\frac{1}{2}e^{\lambda_{1}(t+r_1)}\right]  -re^{\lambda_{1} 
(t+r_1)}\left(  1-\varphi_{2}(t+r_1-r_2)\right)  -\frac{1}{4}e^{2\lambda_{1}(t+r_1)}\\
&  =\left[  \lambda_{1}^{2}-c\lambda_{1}e^{r_1\lambda_1}+e^{r_1\lambda_1}\right]  \frac{1}{2}e^{\lambda_{1} 
t}-re^{\lambda_{1}(t+r_1)}\left(  1-\frac{1}{2} e^{\mu_1(t+r_1-r_2)}\right)  -\frac{1} 
{4}e^{2\lambda_{1}(t+r_1)}\\
&  =-re^{\lambda_{1}(t+r_1)}\left(  1-\frac{1}{2} e^{\mu_1(t+r_1-r_2)}\right)  -\frac{1} 
{4}e^{2\lambda_{1}(t+r_1)}
\leq 0.
\end{align*}

\medskip
For the second equation in (\ref{waveBZ}), for $t\le -r_1$ we have 
\begin{align*}
&  \varphi_{2}^{\prime\prime}(t)-c\varphi_{2}^{\prime}(t+r_1)+b\varphi
_{1}(t+r_1)\left(  1-\varphi_{2}(t+r_1)\right)  \\
&
=\frac{\mu_{1}^{2}}{2}e^{\mu_{1}t}-c\frac{\mu_{1}}{2}e^{\mu_{1}(t+r_1)}+\frac
{b}{2}e^{\lambda_{1}(t+r_1)}\left(  1-\frac{1}{2}e^{\mu_{1}(t+r_1)}\right)  \\
&  =\frac{1}{2}\left( \mu^2_1-c\mu_1e^{r_1\mu_1}+be^{r_1\mu_1} \right)  e^{\mu_1t} 
+\frac{b}{2} \left(  e^{\lambda_1(t+r_1)} -e^{\mu_1(t+r_1)}     \right)
-\frac{b}{4} e^{(\lambda_1+\mu_1)(t+r_1)}\\
& = \frac{b}{2} \left(  e^{\lambda_1(t+r_1)} -e^{\mu_1(t+r_1)}     \right)
-\frac{b}{4} e^{(\lambda_1+\mu_1)(t+r_1)} \le 0
\end{align*}
since $\lambda_1 >\mu_1$ and $t+r_1 \le 0$.

\medskip\noindent
{\bf Case: $-r_1 \le t\le 0$}:
\begin{align*}
A:=&  \left[
\varphi_{1}^{\prime\prime}(t)-c\varphi_{1}^{\prime}(t+r_1)+\varphi
_{1}(t+r_1)\right]  -r\varphi_{1}(t+r_1)(1-\varphi_{2}(t+r_1-r_2))-\varphi_{1}^{2}(t+r_1)\\
&  =\left[  \frac{\lambda_{1}^{2}}{2}e^{\lambda_{1}t}-c\frac{\lambda_{1}} 
{2}e^{-\lambda_{1}(t+r_1)}+1-\frac{1}{2}e^{-\lambda_{1}(t+r_1)}\right]  \\
& \hspace{.5cm} -r\left( 1-\frac{1}{2}e^{-\lambda_{1}(t+r_1)}\right) \left(  1-\varphi_{2}(t+r_1-r_2)\right)  -\varphi_{1}^{2}(t+r_1)\\
\end{align*}
As $\lambda_1$ is a root of Eq.(\ref{l1})
\begin{align*}
A&= \frac{c\lambda_1}{2}\left(e^{\lambda_1(t+r_1)}-e^{-\lambda_1(t+r_1)} \right) + 1-\frac{1}{2}\left( e^{\lambda_1(t+r_1)}+e^{-\lambda_1(t+r_1)} \right) \\
& \hspace{.5cm} -r\left( 1-\frac{1}{2}e^{-\lambda_{1}(t+r_1)}\right) \left(  1-\varphi_{2}(t+r_1-r_2)\right)  -\varphi_{1}^{2}(t+r_1)\\
&  =c\lambda_1 \sinh (\lambda_1(t+r_1))+1-\cosh (\lambda_1(t+r_1)) \\
& \hspace{.5cm} -r\left( 1-\frac{1}{2}e^{-\lambda_{1}(t+r_1)}\right) \left(  1-\varphi_{2}(t+r_1-r_2)\right)  -\varphi_{1}^{2}(t+r_1).
\end{align*}
As $-r_1\le t\le 0$ and $r_1$ is sufficiently small, using the power expansions of $\sinh x$ and $\cosh x$ we have
$$
\sinh (\lambda_1(t+r_1)) \approx \lambda_1(t+r_1 )+o(r^2_1), \quad 1-\cosh (\lambda_1(t+r_1))\approx - \frac{(\lambda_1(t+r_1))^2}{2}+o(r^2_1).
$$
Finally, we have 
\begin{align*}
A&  = c\lambda_1^2(t+r_1) - \frac{(\lambda_1(t+r_1))^2}{2}+o(r^2_1) \\
&\hspace{.5cm }  -r\left( 1-\frac{1}{2}e^{-\lambda_{1}(t+r_1)}\right) \left(  1-\varphi_{2}(t+r_1-r_2)\right)  -\varphi_{1}^{2}(t+r_1)\le 0,
\end{align*}
because when $r_1$ is sufficiently small, the limit of the right hand side is $-(r+1)/4$ as $r_1\to 0$.

\medskip
For the second equation we have
\begin{align*}
B&:= \varphi_{2}^{\prime\prime}(t)-c\varphi_{2}^{\prime}(t+r_1)+b\varphi_{1}(t+r_1)\left(
1-\varphi_{2}(t+r_1)\right)  \\
&= \frac{\mu_1^2}{2}e^{\mu_1t}-\frac{c\mu_1}{2}e^{-\mu_1(t+r_1)}+b\left( 1-\frac{1}{2}e^{-\lambda_1(t+r_1)}\right) \frac{1}{2}e^{-\mu_1(t+r_1)}\\
&= \left[ \frac{\mu_1^2}{2}e^{\mu_1t} -\frac{c\mu_1}{2}e^{\mu_1(t+r_1)} +\frac{b}{2}e^{\mu_1(t+r_1)} \right] 
+\frac{c\mu_1}{2}e^{\mu_1(t+r_1)}  -\frac{c\mu_1}{2}e^{-\mu_1(t+r_1)} \\
&\hspace{0.5cm} - \frac{b}{2}e^{\mu_1(t+r_1)}  +\frac{b}{2}e^{-\mu_1(t+r_1)} -\frac{b}{4}e^{-(\lambda_1+\mu_1)(t+r_1)} .
\end{align*}
As $\mu_1$ is a root of Eq.(\ref{m1}), 
\begin{align*}
B&=
\frac{c\mu_1}{2}e^{\mu_1(t+r_1)}  -\frac{c\mu_1}{2}e^{-\mu_1(t+r_1)}  - \frac{b}{2}e^{\mu_1(t+r_1)}  +\frac{b}{2}e^{-\mu_1(t+r_1)} -\frac{b}{4}e^{-(\lambda_1+\mu_1)(t+r_1)} .
\end{align*}
As $-r_1\le t\le 0$ and the limit of the right hand side is $-b/4<0$ as $r_1\to 0$, it follows that for sufficiently small $r_1$ we will have  $B\le 0$.

\medskip
{\bf Case  $t>0$}:
\begin{align*}
A&:= \left[
\varphi_{1}^{\prime\prime}(t)-c\varphi_{1}^{\prime}(t+r_1)+\varphi
_{1}(t+r_1)\right]  -r\varphi_{1}(t+r_1)(1-\varphi_{2}(t+r_1-r_2))-\varphi_{1}^{2}(t+r_1)\\
&  = \left[  \frac{-\lambda_{1}^{2}}{2}e^{-\lambda_{1}t}-c\frac{\lambda_{1}} 
{2}e^{-\lambda_{1}(t+r_1)}+1-\frac{1}{2}e^{-\lambda_{1}(t+r_1)}\right]  \\
& \hspace{.5cm}-r\varphi_{1}(t+r_1)(1-\varphi_{2}(t+r_1-r_2))-\varphi_{1}^{2}(t+r_1)\\
 &=  \left[  \frac{-\lambda_{1}^{2}}{2}e^{-\lambda_{1}t}+c\frac{\lambda_{1}} 
{2}e^{-\lambda_{1}t+\lambda_1 r_1}-\frac{1}{2}e^{-\lambda_{1}t+\lambda_1r_1}\right] \\
& -c\frac{\lambda_{1}} 
{2}e^{-\lambda_{1}t+\lambda_1 r_1} -c\frac{\lambda_{1}} 
{2}e^{-\lambda_{1}t-\lambda_1 r_1} +\frac{1}{2}e^{-\lambda_{1}t+\lambda_1 r_1} +1-\frac{1}{2}e^{-\lambda_{1}(t+r_1)} \\
&-r\varphi_{1}(t+r_1)(1-\varphi_{2}(t+r_1-r_2))-\varphi_{1}^{2}(t+r_1) 
 \end{align*}
Since $\lambda_1$ is a root of Eq.(\ref{l1}), we have
\begin{align*}
&  \frac{\lambda_{1}^{2}}{2}e^{-\lambda_{1}t}-c\frac{\lambda_{1}} 
{2}e^{-\lambda_{1}t+\lambda_1 r_1}+\frac{1}{2}e^{-\lambda_{1}t+\lambda_1r_1}=0.
\end{align*}
Hence,
\begin{align*}
A&= -c\frac{\lambda_{1}} 
{2}e^{-\lambda_{1}t+\lambda_1 r_1} -c\frac{\lambda_{1}} 
{2}e^{-\lambda_{1}t-\lambda_1 r_1} +\frac{1}{2}e^{-\lambda_{1}t+\lambda_1 r_1} +1-\frac{1}{2}e^{-\lambda_{1}(t+r_1)} \\
&-r\varphi_{1}(t+r_1)(1-\varphi_{2}(t+r_1-r_2))-\varphi_{1}^{2}(t+r_1) \\
&= \left[ 1-c\lambda_1 \cosh(\lambda_1r_1)+\sinh(\lambda_1r_1)\right] e^{-\lambda_1 t} \\
& -r\varphi_{1}(t+r_1)(1-\varphi_{2}(t+r_1-r_2))-\varphi_{1}^{2}(t+r_1).
\end{align*}
Since
$$
\lim_{r_1\downarrow 0} \left[ 1-c\lambda_1 \cosh(\lambda_1r_1)+\sinh(\lambda_1r_1)\right] <0,
$$
it follows that $A\le 0$ for sufficiently small $r_1$.

For the second equation
\begin{align*}
B&:= \varphi_{2}^{\prime\prime}(t)-c\varphi_{2}^{\prime}(t+r_1)+b\varphi_{1}(t+r_1)\left(
1-\varphi_{2}(t+r_1)\right)  \\
&= -\frac{\mu_1^2}{2}e^{-\mu_1t}-\frac{c\mu_1}{2}e^{-\mu_1(t+r_1)}+b\left( 1-\frac{1}{2}e^{-\lambda_1(t+r_1)}\right) \frac{1}{2}e^{-\mu_1(t+r_1)}\\
&= \left[  -\frac{\mu_1^2}{2}e^{-\mu_1t} +\frac{\mu_1}{2}e^{-\mu_1t+\mu r_1} -\frac{b}{2}e^{-\mu_1 t+\mu_1r_1)} \right] 
\\
& \hspace{0.5cm} -\frac{c\mu_1}{2}e^{-\mu_1t+\mu r_1} -\frac{c\mu_1}{2}e^{-\mu_1t-\mu r_1} +\frac{b}{2}e^{-\mu_1 t+\mu_1r_1)} \\
& \hspace{0.5cm}+ \frac{b}{2}e^{-\mu_1(t+r_1)} -\frac{b}{4}e^{-(\lambda_1+\mu_1)(t+r_1)} .
\end{align*}
As $\mu_1$ is a root of Eq.(\ref{m1}),
\begin{align*}
B&= -\frac{c\mu_1}{2}e^{-\mu_1t+\mu_1 r_1} -\frac{c\mu_1}{2}e^{-\mu_1t-\mu_1 r_1} +\frac{b}{2}e^{-\mu_1 t+\mu_1r_1} \\
& \hspace{0.5cm} +\frac{b}{2}e^{-\mu_1(t+r_1)} -\frac{b}{4}e^{-(\lambda_1+\mu_1)(t+r_1)} \\
& = \left(\frac{b}{2}-c\mu_1 \right)\cosh (\mu_1 r_1)e^{-\mu_1t} -\frac{b}{4}e^{-(\lambda_1+\mu_1)(t+r_1)} .
\end{align*}
Since $c>2\sqrt{b}$ we have $c^2>4b$, and 
$$
\frac{b}{2} < \frac{c^2}{8} < \frac{c(c+\sqrt{c^2-4b})}{2}=c\mu_1 .
$$
This yields  $B\le 0.$ The claim is proved.
\end{proof}

\subsubsection{Quasi-lower solutions}
To construct a quasi-lower solution we consider the equation
\begin{equation}\label{l2}
\lambda^2 -c\lambda e^{r_1 \lambda}+\frac{e^{r_1\lambda}}{2}=0 .
\end{equation}
Arguing as in Claim \ref{claim 4}, for sufficiently small $r_1$ Eq.(\ref{l2}) has exactly two real roots
 in the small neighborhood of two distinct roots 
 $$
\eta_{1,2} =\frac{c\pm \sqrt{c^2-2}}{2} 
$$
of the equation
$$
\lambda ^2 -c\lambda +\frac{1}{2}=0 .
$$ 
We will denote by $\lambda_2$ the real solution of Eq(\ref{l2}) in the small neighborhood of $\eta_2=(c+\sqrt{c^2-2})/2$. Note that for sufficiently small $r_1$, since $\eta_2 >  \lambda_0$, we have $ \lambda_2>\lambda_1$. Next, define 
$$
\underline{\varphi_2}(t)=0, t\in \R ,\quad 
\underline{ \varphi_1}(t)=\begin{cases} \frac{ e^{\lambda_2t}}{2k}, \ t\le 0 ,\\
\frac{1}{2k} , \ t> 0.
\end{cases}
$$

\medskip
\begin{claim}
Assume that $0<r\le 1/4$ and $k\ge2$ is sufficiently large number. Then, $\underline{\varphi}(t):=(\underline{\varphi}_1(t),\underline{\varphi}_2 (t))^T$ is a sub solution of Eq.(\ref{waveBZ}) and 
$$
0\le \underline{\varphi} (t) \le \overline{\varphi}(t) \le 1, \ t\in\R ,
$$
where $\overline{\varphi}=(\varphi_1,\varphi_2)^T$ is defined from Claim \ref{claim 6}.
\end{claim}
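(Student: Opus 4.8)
The plan is to verify the two sub-solution inequalities of (\ref{waveBZ}) separately, exploiting that $\underline{\varphi}_2\equiv 0$ collapses almost everything, and then to check the ordering by an elementary piecewise comparison of exponentials.

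First, because $\underline{\varphi}_2\equiv 0$, the sub-solution inequality for the second equation of (\ref{waveBZ}) reduces to
\[
\underline{\varphi}_2''(t)-c\underline{\varphi}_2'(t+r_1)+b\underline{\varphi}_1(t+r_1)\bigl(1-\underline{\varphi}_2(t+r_1)\bigr)=b\,\underline{\varphi}_1(t+r_1)\ge 0,
\]
which is immediate from $b>0$ and $\underline{\varphi}_1\ge 0$. Hence only the first equation needs attention.

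For the first inequality I would split $\R$ at the two breakpoints of $\underline{\varphi}_1$, namely $t=0$ (where $\underline{\varphi}_1''$ jumps) and $t=-r_1$ (where the shifted argument $t+r_1$ crosses $0$), giving the same three regions $t\le -r_1$, $-r_1\le t\le 0$, $t>0$ as in Claim \ref{claim 6}. On $t\le -r_1$ both $t$ and $t+r_1$ are nonpositive, so $\underline{\varphi}_1(t)=e^{\lambda_2 t}/(2k)$ and $\underline{\varphi}_1(t+r_1)=e^{\lambda_2(t+r_1)}/(2k)$; writing $1-r=\tfrac12+(\tfrac12-r)$ and using that $\lambda_2$ solves (\ref{l2}), the terms $\underline{\varphi}_1''(t)-c\underline{\varphi}_1'(t+r_1)+\tfrac12\underline{\varphi}_1(t+r_1)$ cancel, leaving
\[
\Bigl(\tfrac12-r\Bigr)\frac{e^{\lambda_2(t+r_1)}}{2k}-\frac{e^{2\lambda_2(t+r_1)}}{4k^2}.
\]
Dividing by the positive factor $e^{\lambda_2(t+r_1)}/(2k)$ reduces the required nonnegativity to $\tfrac12-r\ge e^{\lambda_2(t+r_1)}/(2k)$; since $\lambda_2>0$ and $t+r_1\le 0$ the right side is at most $1/(2k)$, so the hypotheses $r\le 1/4$ and $k\ge 2$ give $\tfrac12-r\ge\tfrac14\ge 1/(2k)$. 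On $-r_1\le t\le 0$ one has $\underline{\varphi}_1(t+r_1)=1/(2k)$ and $\underline{\varphi}_1'(t+r_1)=0$, while $\underline{\varphi}_1''(t)=\lambda_2^2 e^{\lambda_2 t}/(2k)$, so the left-hand side equals $\lambda_2^2 e^{\lambda_2 t}/(2k)+\tfrac{1}{2k}\bigl((1-r)-\tfrac{1}{2k}\bigr)$, a sum of nonnegative terms once $1-r\ge 1/(2k)$ (again from $r\le 1/4$, $k\ge 2$). On $t>0$ the function is flat, $\underline{\varphi}_1''=\underline{\varphi}_1'=0$, and the expression collapses to $\tfrac{1}{2k}\bigl((1-r)-\tfrac{1}{2k}\bigr)\ge 0$.

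Finally, for $0\le\underline{\varphi}\le\overline{\varphi}\le1$ the second components are clear from $\underline{\varphi}_2\equiv 0\le\varphi_2\le 1$. For the first components, on $t\le 0$ the inequality $e^{\lambda_2 t}/(2k)\le \tfrac12 e^{\lambda_1 t}$ is equivalent to $e^{(\lambda_2-\lambda_1)t}\le k$, which holds because $\lambda_2>\lambda_1$ makes the exponent nonpositive and $k\ge 2$; on $t>0$ one has $\underline{\varphi}_1(t)=1/(2k)\le\tfrac12<1-\tfrac12 e^{-\lambda_1 t}=\varphi_1(t)\le 1$. I expect the region $t\le -r_1$ to be the only delicate point: it is where (\ref{l2}) is used to annihilate the linear terms and where the two hypotheses must cooperate through the single chain $\tfrac12-r\ge\tfrac14\ge 1/(2k)$; the factor $\tfrac12$ in (\ref{l2}) (in place of the $1$ used for the upper solution) is exactly what creates this margin. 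The other regions and the ordering are routine sign checks.
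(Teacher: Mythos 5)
Your proposal is correct and follows essentially the same route as the paper's own proof: the same three-region split at $t=-r_1$ and $t=0$, the same decomposition $1-r=\tfrac12+(\tfrac12-r)$ so that the characteristic identity (\ref{l2}) annihilates the linear part on $t\le -r_1$, and the same sign checks on the remaining regions, with the second component trivial because $\underline{\varphi}_2\equiv 0$. The only difference is that you spell out the ordering $0\le\underline{\varphi}\le\overline{\varphi}\le 1$ via $e^{(\lambda_2-\lambda_1)t}\le k$, which the paper dismisses as ``easy to see.''
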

\begin{proof}
It is easy to see that $0\le \underline{\varphi} (t) \le \overline{\varphi}(t) $. Next, we will verify that $\underline{\varphi} (t)$ is a quasi-lower solution of Eq.(\ref{waveBZ}). Substituting the function $ \underline{\varphi} (t) $ into the second equation of Eq.(\ref{waveBZ}) yields that \begin{align}
\underline{\varphi}_{2}^{\prime\prime}(t)-c\underline{\varphi}_{2}^{\prime}(t+r_1)+b\underline{\varphi}_{1}(t+r_1)\left(
1-\underline{\varphi}_{2}(t+r_1)\right)  = b\underline{\varphi}_{1}(t+r_1)\ge 0.
\end{align}
\noindent For the first equation we will look at three cases.

\medskip \noindent 
\noindent{\bf Case 1: $t< - r_1 $}: We have
\begin{align*}
&\underline{\varphi}_{1}^{\prime\prime}(t)-c\underline{\varphi}_{1}^{\prime}(t+r_1)+\underline{\varphi}_{1}(t+r_1)\left(
(1-r)-\underline{\varphi}_{1}(t+r_1)-r\underline{\varphi}_{2}(t+r_1-r_2)\right) \\
&= \underline{\varphi}_{1}^{\prime\prime}(t)-c\underline{\varphi}_{1}^{\prime}(t+r_1)+\underline{\varphi}_{1}(t+r_1)\left(
(1-r)-\underline{\varphi}_{1}(t+r_1)\right) \\
&= \left[ \underline{\varphi}_{1}^{\prime\prime}(t)-c\underline{\varphi}_{1}^{\prime}(t+r_1)+\frac{1}{2}  \underline{\varphi}_{1}(t+r_1)\right]
+ \underline{\varphi}_{1}(t+r_1)\left(
\frac{1}{2}-r-\underline{\varphi}_{1}(t+r_1)\right) \\
&=\underline{\varphi}_{1}(t+r_1)\left(
\frac{1}{2}-r-\underline{\varphi}_{1}(t+r_1)\right) \ge 0,
\end{align*}
since, $\underline{\varphi}_{1}^{\prime\prime}(t)-c\underline{\varphi}_{1}^{\prime}(t+r_1)+\frac{1}{2}  \underline{\varphi}_{1}(t+r_1)=0,$  $0<r\le 1/4$ and $\underline{\varphi}_{1}(t+r_1)\le 1/4.$

\medskip \noindent {\bf Case 2: $- r_1\le t< 0 $}. It is important to note that there are know subcases here since $\underline{\varphi}_{2}(t+r_1-r_2)=0.$

\begin{align*}
&B=\underline{\varphi}_{1}^{\prime\prime}(t)-c\underline{\varphi}_{1}^{\prime}(t+r_1)+\underline{\varphi}_{1}(t+r_1)\left(
(1-r)-\underline{\varphi}_{1}(t+r_1)-r\underline{\varphi}_{2}(t+r_1-r_2)\right) \\
&=\underline{\varphi}_{1}^{\prime\prime}(t)+\underline{\varphi}_{1}(t+r_1)\left(
(1-r)-\underline{\varphi}_{1}(t+r_1)\right),
\end{align*}
since $\underline{\varphi}_{1}(t+r_1)$ is constant. Furthermore, $\underline{\varphi}_{1}^{\prime\prime}(t)>0, (1-r)-\underline{\varphi}_{1}(t+r_1)>0,$ so $B> 0.$   
\end{proof}

\medskip \noindent {\bf Case 2: $ 0\le t $}.
\begin{align*}
&B=\underline{\varphi}_{1}^{\prime\prime}(t)-c\underline{\varphi}_{1}^{\prime}(t+r_1)+\underline{\varphi}_{1}(t+r_1)\left(
(1-r)-\underline{\varphi}_{1}(t+r_1)-r\underline{\varphi}_{2}(t+r_1-r_2)\right) \\
&=\underline{\varphi}_{1}(t+r_1)\left(
(1-r)-\underline{\varphi}_{1}(t+r_1)\right)>\ge 0,
\end{align*}
since $\underline{\varphi}_{1}(t)$ is constant. 
\begin{corollary}
Assume that $1<k, 0<r\le 1/4$, $2<2\sqrt{b} < c$. Then, Eq.(\ref{waveBZ}) has a traveling wave solution if the delays $\tau_1,\tau_2$ are sufficiently small.
\end{corollary}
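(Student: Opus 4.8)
The plan is to read this corollary as the capstone that feeds the two explicit constructions of the preceding subsections into the abstract existence result, Theorem \ref{the main2}. Concretely, I would fix a wave speed $c$ with $2\sqrt{b}<c$ (possible since $2\sqrt b>2$ under the hypothesis), set $K=1$, and regard the Belousov--Zhabotinskii wave system (\ref{waveBZ}) within the diagonal multi-dimensional framework of the Remark following Theorem \ref{the main2}, with $f=(f_1,f_2)^T$. The goal is then simply to confirm that all the hypotheses of Theorem \ref{the main2} are in force and to exhibit the required ordered pair of upper and lower solutions; the existence of a monotone traveling wave follows at once, and translating $r_1=c\tau_1$, $r_2=c\tau_2$ back to $\tau_1,\tau_2$ (with $c$ fixed) yields the stated smallness condition on the delays.

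First I would verify the standing assumptions (H1)--(H3). Assumption (H1), the identity $f(\hat 0)=f(\hat 1)=0$, is read off directly from the formulas for $f_1,f_2$. For (H2) I would quote the quasimonotonicity computation of \cite{wuzou}, which gives the inequality (\ref{A}) with $\beta=\mathrm{diag}(\beta_1,\beta_2)$, $\beta_1\ge 2-s$, $\beta_2\ge b$ on the order interval $[\hat 0,\hat 1]$; the restrictions $0<r\le 1/4$ and $c>2\sqrt b$ keep the relevant coefficients with the correct sign. Assumption (H3) (continuity and boundedness of $H$ on $\Gamma$) is routine since $f$ is Lipschitz and the profiles are confined to $[0,1]$. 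With (H1)--(H2) secured, each diagonal scalar operator $\mathcal L_i\phi=\phi''(\cdot)-c\phi'(\cdot+r_1)-\beta_i\phi(\cdot+r_1)$ is, by Proposition \ref{pro 1} and Theorem \ref{the 1}, invertible with a negative Green function for $r_1=c\tau_1$ sufficiently small, so that $F=-\mathcal L^{-1}H$ is a well-defined, monotone, compact operator on the relevant order interval.

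Next I would assemble the ordered pair. For the upper solution I take $\overline\varphi=(\varphi_1,\varphi_2)^T$ from Claim \ref{claim 6}: for $c>2\sqrt b$ and $r_1$ small this is a nondecreasing quasi-upper solution lying in $\Gamma$, with $\lambda_1(r_1),\mu_1(r_1)$ supplied by Claims \ref{claim 4} and \ref{claim 5}. For the lower solution I take $\underline\varphi=(\underline\varphi_1,\underline\varphi_2)^T$ from the quasi-lower construction above, valid for $0<r\le 1/4$ and $k$ large (in particular $k\ge 2$); it satisfies $0\le\underline\varphi\le\overline\varphi\le 1$ and has $\lim_{t\to+\infty}\underline\varphi_1(t)=1/(2k)>0$, so it is not in $\Gamma$, which is exactly why the sharper Theorem \ref{the main2} is needed. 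Applying Proposition \ref{smoothprop1} componentwise, the images $F\overline\varphi$ and $F\underline\varphi$ are genuine nondecreasing upper and lower solutions of class $BC^2$ with $F\underline\varphi\le F\overline\varphi$; moreover $\underline\varphi\le F\underline\varphi$ forces $\lim_{t\to+\infty}(F\underline\varphi)_1\ge 1/(2k)>0$, so the strictly positive terminal value on the first component is preserved. This is precisely the configuration required by Theorem \ref{the main2}, which then produces a monotone profile $\phi\in\Gamma$ solving (\ref{waveBZ}).

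The hard part will be the bookkeeping that makes the two-component system genuinely fit a theory proved largely for scalar equations, and in particular verifying that the positive limit of the lower solution forces the terminal value to be the upper equilibrium. One must argue, using (H1) and the absence of any equilibrium strictly between $\hat 0$ and $\hat 1$, that the monotone limit $\phi_0=\lim_n F^n\overline\varphi$ satisfies $\lim_{t\to+\infty}\phi_0(t)=\hat 1$ rather than some intermediate constant: the positivity $\lim_{t\to+\infty}\underline\varphi_1>0$ rules out the trivial limit, while the corollary to Lemma \ref{lem 3.7} identifies any admissible limit as a zero of $f$. Keeping the estimates uniform in $t$ as $r_1,r_2\to 0$, and confirming that a single small threshold on the delays works simultaneously for both components, is the only delicate point; everything else reduces to invoking the results already established above.
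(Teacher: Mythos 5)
Your proposal is correct and follows essentially the same route the paper intends: the corollary is stated as a direct consequence of Claim \ref{claim 6} (quasi-upper solution), the preceding claim (quasi-lower solution with $0\le\underline{\varphi}\le\overline{\varphi}\le 1$), Proposition \ref{smoothprop1} to upgrade these to genuine upper/lower solutions, and Theorem \ref{the main2} applied in the diagonal multi-component setting of the remark. Your added care about identifying the terminal limit as $\hat 1$ via the positivity of $\lim_{t\to+\infty}\underline{\varphi}_1$ and the absence of intermediate equilibria is a point the paper leaves implicit, but it is consistent with, not a departure from, the paper's argument.
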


\subsection{Traveling waves for Fisher-KPP equations with delay in diffusion}
We will consider the traveling wave problem for equation
\begin{equation}\label{Classmodel}
\frac{\partial u(x,t)}{\partial t}= D\frac{\partial u(x,t-\tau_1)}{\partial x^2}+ u(x,t-\tau_2)\left(1-u(x,t)\right), \ \tau_1, \tau_2>0.   
\end{equation}
In fact, if we take the make the substitution $\phi(x+ct)=u(x,t)$ as earlier, then for $\xi=x+ct, \ c>0, \ D=1$
\begin{align*}
&u(x,t-\tau_2)=\phi(x+c(t-\tau_2)=  \phi(x+ct-c\tau_2)=\phi(\xi-c\tau_2)\\
&\frac{\partial u(x,t)}{\partial t}=c\phi'(x+ct)=c\phi'(\xi)\\
&\frac{\partial^2 u(x,t-\tau_1)}{\partial x^2}=\phi''(x+c(t-\tau_1))=\phi''(\xi-c\tau_1).
\end{align*}
Taking $r_1=c\tau_1, \ r_2=c\tau_2,$ equation \ref{Classmodel} becomes 
\begin{align*}
 c\phi'(\xi)&= \phi''(\xi-r_1)+ \phi(\xi-r_2)\left(1-\phi(\xi)\right)\\
 &= \phi''(\xi-r_1)-c\phi'(\xi)+ \phi(\xi-r_2)\left(1-\phi(\xi)\right)
\end{align*}
For simplicity we will write $t=\xi-r_1$, we have an equation of the form
\begin{equation} \label{cwm}
x''(t)-cx'(t+r_1)+x(t+(r_1-r_2))\left(1-x(t+r_1)\right)=0.      
\end{equation}
Define $f(\varphi)=\varphi(-r_2)\left(1-\varphi(0)\right),$ then $\varphi$ satisfies \[f_c(\varphi)-f_c(\psi ) +\beta (\varphi(0)-\psi (0)) \ge 0,\]
when $\psi\leq \varphi, \ \beta \geq 1.$ 
\subsubsection{Upper Solutions}
The quadratic function $P(\mu)=-\mu^2+c\mu -1$ has two positive solutions 
\[0< \mu_1=\frac{c-\sqrt{c^2-4}}{2}<\mu_2=\frac{c+\sqrt{c^2-4}}{2}.\]
\begin{proposition}
Let $c>2, \ 0<\theta<1, \ \mu_1$ as defined above and $r_i>0, i=1,2,$ are sufficiently small, then for all $t\in \R$   
\[\bar{\varphi}(t)=\frac{1}{1+\theta e^{-\mu_1 t}}\] is an upper solution of equation \ref{cwm} that belongs to $\Gamma.$
\end{proposition}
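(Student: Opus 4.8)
The plan is as follows. First I would record the cheap part: $\bar\varphi$ is globally smooth (there is no piecewise definition, hence no corner to check), strictly increasing since $\mu_1,\theta>0$, satisfies $0<\bar\varphi<1$, and has limits $0$ and $1=K$ at $\mp\infty$; together with the obvious boundedness of $\bar\varphi,\bar\varphi',\bar\varphi''$ this gives $\bar\varphi\in\Gamma\cap BC^2(\R,[0,1])$, so that only the upper-solution inequality remains, namely
\[
N(t):=\bar\varphi''(t)-c\bar\varphi'(t+r_1)+\bar\varphi(t+r_1-r_2)\bigl(1-\bar\varphi(t+r_1)\bigr)\le 0,\qquad t\in\R.
\]
The device I would use is the substitution $p=p(t):=\theta e^{-\mu_1 t}$, under which $\bar\varphi(t+s)=1/(1+p\,e^{-\mu_1 s})$ for every shift $s$, and $p$ sweeps out all of $(0,\infty)$ as $t$ runs over $\R$. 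Writing $a:=e^{-\mu_1 r_1}$, $d:=e^{-\mu_1(r_1-r_2)}$ and differentiating the closed form, one finds that each of the three terms of $N(t)$ carries the common factor $p$, so that $N(t)=p\,B(p)$ with
\[
B(p)=-\mu_1^2\frac{1-p}{(1+p)^3}-\frac{c\mu_1 a}{(1+pa)^2}+\frac{a}{(1+pa)(1+pd)}.
\]
Since $p>0$, it suffices to prove $B(p)\le 0$ for all $p>0$.

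Next I would clear denominators: multiplying $B$ by the strictly positive quantity $(1+p)^3(1+pa)^2(1+pd)$ reduces the claim to showing $Q(p)\le 0$ on $[0,\infty)$, where $Q$ is a polynomial in $p$ of degree at most $4$ whose coefficients are entire functions of $a,d$, hence of $(r_1,r_2)$. The entire argument then collapses to the signs of the five coefficients of $Q$. At $r_1=r_2=0$ one has $a=d=1$, and a short computation using the characteristic relation $\mu_1^2-c\mu_1+1=0$ reduces $Q$ to $-2\mu_1^2(1+p)^3$; in particular the four low-order coefficients tend to $-2\mu_1^2,-6\mu_1^2,-6\mu_1^2,-2\mu_1^2$, all strictly negative, while the top ($p^4$) coefficient tends to $0$. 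By continuity of the coefficients in $(r_1,r_2)$, the four low-order coefficients stay strictly negative once $r_1,r_2$ are small enough; this is the genuinely perturbative but routine part.

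The delicate coefficient is the leading one, and this is where I expect the main difficulty to sit: it governs the behaviour of $N$ as $t\to-\infty$, exactly the regime whose dominant balance is the undelayed characteristic equation and is therefore degenerate at zero delay. A direct computation identifies the $p^4$-coefficient as $a^2 d\,h(r_1,r_2)$ with
\[
h(r_1,r_2)=\mu_1^2-c\mu_1 e^{\mu_1 r_1}+e^{\mu_1(r_1-r_2)}.
\]
Using $c\mu_1=\mu_1^2+1$ (again the characteristic relation) I would rewrite $h=\mu_1^2-e^{\mu_1 r_1}\bigl(\mu_1^2+1-e^{-\mu_1 r_2}\bigr)$ and note that for $r_1,r_2\ge 0$ not both zero one has $e^{\mu_1 r_1}\ge 1$ and $\mu_1^2+1-e^{-\mu_1 r_2}\ge\mu_1^2>0$ with strict inequality somewhere, so $h<0$ and hence the leading coefficient $a^2 d\,h<0$ for all positive delays. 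Consequently, for sufficiently small $r_1,r_2>0$ every coefficient of $Q$ is $\le 0$, whence $Q(p)\le 0$ for all $p\ge 0$, giving $B\le 0$ and finally $N(t)\le 0$ for every $t$, i.e.\ $\bar\varphi$ is an upper solution in $\Gamma$. The conceptual crux is thus the explicit resolution, via the characteristic identity, of the term that degenerates at $t\to-\infty$; it is precisely this term that forces the delays to be strictly positive.
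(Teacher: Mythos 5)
Your proof is correct, and it takes a genuinely different route from the paper's. The paper computes $\bar{\varphi}'$ and $\bar{\varphi}''$ explicitly, substitutes into the wave expression, passes to the \emph{pointwise} limit $r_1,r_2\to 0$, simplifies using $\mu_1^2-c\mu_1+1=0$ and $c\mu_1\in(1,2)$, and then asserts the existence of $r^*(c)>0$ such that for $0<r_i<r^*(c)$ the expression stays below a fixed negative multiple of $\theta e^{-\mu_1 t}/(1+\theta e^{-\mu_1 t})^3$; the uniformity of this perturbation in $t$ — in particular in the degenerate regime $t\to-\infty$, where the expression tends to $0$ — is left implicit. Your substitution $p=\theta e^{-\mu_1 t}$ converts the required inequality into the nonpositivity on $[0,\infty)$ of a single quartic $Q(p)$ whose coefficients depend continuously on $(r_1,r_2)$, which reduces the whole problem to finitely many sign conditions and therefore disposes of the uniformity issue automatically. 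You also isolate exactly where the degeneracy lives: the $p^4$-coefficient, which vanishes at zero delay, equals $a^2 d\,h(r_1,r_2)$ with $h=\mu_1^2-e^{\mu_1 r_1}\bigl(\mu_1^2+1-e^{-\mu_1 r_2}\bigr)<0$ for all positive delays, not merely small ones, so the smallness restriction is needed only to keep the four lower-order coefficients negative. I checked the computations: $N(t)=pB(p)$ with your $B$, the collapse of $Q$ to $-2\mu_1^2(1+p)^3$ at $a=d=1$ via the characteristic relation, and the factorization of the leading coefficient are all correct. Your argument is tighter than the paper's precisely at the one point where the paper is thin.
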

\begin{proof}
The proof is similar to that in Wu and Zou \cite{wuzou} with slight modifications. It is easy to show $\bar{\varphi}(t)\in \Gamma.$ In fact, $\bar{\varphi}(t)$ is differentiable for all $t\in \R,$ and by direct calculation 
\[\bar{\varphi}'(t)=\frac{\theta \mu_1 e^{-\mu_1 t}}{(1+\theta e^{-\mu_1 t})^2}>0.\]
Thus, $\bar{\varphi}(t)$ is non decreasing. We now turn our attention to the asymptotic behavior of $\bar{\varphi}(t).$ Indeed, 
\[\lim_{t\to -\infty}\bar{\varphi}(t)=\lim_{t\to -\infty}\frac{1}{1+\theta e^{-\mu_1 t}}=0,\]
and 
\[\lim_{t\to \infty}\bar{\varphi}(t)=\lim_{t\to \infty}\frac{1}{1+\theta e^{-\mu_1 t}}=1.\]
Therefore, $\bar{\varphi}(t)\in \Gamma.$  In order to show that $\bar{\varphi}(t)$ is an upper solution we first need to find $\bar{\varphi}'(t), \bar{\varphi}''(t)$.  
\[\bar{\varphi}'(t)=\frac{\theta \mu_1e^{-\mu_1 t}}{\left(1+\theta e^{-\mu_1 t}\right)^2}, \ \bar{\varphi}''(t)=\frac{\theta \mu_1^2 e^{-\mu_1 t}\left(\theta e^{-\mu_1 t}-1\right)}{\left(1+\theta e^{-\mu_1 t}\right)^3}.\]
It is easy to see that
\[\sup_{t\in \R}|\bar{\varphi}(t)|<\infty, \ \sup_{t\in \R}|\bar{\varphi}'(t)|<\infty.\]
It is easy to see that $\varphi''(t)$ is integrable almost everywhere .The only piece that is left to prove is \[D\bar{\varphi}''(t)-c\bar{\varphi}'(t+r_1)+\bar{\varphi}(t+r_1-r_2)\left(1-\bar{\varphi}(t+r_1)\right)\le 0.\] 
The first and second order derivatives without delay were calculated above. Now, 
\[\bar{\varphi}(t+r_1)=\frac{1}{1+\theta e^{-\mu_1 (t+r_1)}}, \ \bar{\varphi}(t+r_1-r_2)=\frac{1}{1+\theta e^{-\mu_1 (t+r_1-r_2)}}, \  \bar{\varphi}'(t+r_1)= \frac{\theta \mu_1e^{-\mu_1 (t+r_1)}}{\left(1+\theta e^{-\mu_1 (t+r_1)}\right)^2}.\]
Substituting into the wave model and performing the following calculation
\begin{align*}
  &\bar{\varphi}''(t)-c\bar{\varphi}'(t+r_1)+\bar{\varphi}(t+r_1-r_2)\left(1-\bar{\varphi}(t+r_1)\right)\\
  &= \frac{\mu_1^2 e^{-\mu_1 t}\left(\theta e^{-\mu_1 t}-1\right)}{\left(1+\theta e^{-\mu_1 t}\right)^3}-c \frac{\theta \mu_1e^{-\mu_1 (t+r_1)}}{\left(1+\theta e^{-\mu_1 (t+r_1)}\right)^2}+\frac{1}{1+\theta e^{-\mu_1 (t+r_1-r_2)}}\left(1-\frac{1}{1+\theta e^{-\mu_1 (t+r_1)}}\right).
 \end{align*}
  We are concerned with some small delay. In fact, 
  \begin{align*}
 &\lim_{r_1,r_2\to 0} \bar{\varphi}''(t)-c\bar{\varphi}'(t+r)+\bar{\varphi}(t+r)\left(1-\bar{\varphi}(t)\right)\\
 &=\frac{\theta e^{-\mu_1 t}}{\left(1+\theta e^{-\mu_1 t}\right)^2}\left(\frac{2D\theta \mu_1^2}{1+\theta e^{-\mu_1 t}}-\mu_1^2-c\mu_1+1\right)\\
 &=-\frac{\theta e^{-\mu_1 t}}{\left(1+\theta e^{-\mu_1 t}\right)^3}\left(-\theta\left(\mu_1^2+-c\mu_1+1\right)e^{-\mu_1 t}+ \mu_1^2 +c\mu_1-1\right).
\end{align*}. 
Notice that 
\[-\frac{\theta e^{-\mu_1 t}}{\left(1+\theta e^{-\mu_1 t}\right)^3}<0, \ t\in \R, \mu_1^2-c\mu_1+1=0.\]
Employing the fact that $\mu_1^2=c\mu_1-1,$ and $c\mu_1\in (1,2)$ when $c>2.$ The latter fact was shown in \cite{wuzou}. Thus, we can find some $r^*(c)>0$ such that when $0<r_i<r^*(c), \ i=1,2$, we have
\[\bar{\varphi}''(t)-c\bar{\varphi}'(t+r_1)+\bar{\varphi}(t+r_1-r_2)\left(1-\bar{\varphi}(t+r_1)\right)\leq-\frac{2\theta e^{-\mu_1 t}}{\left(1+\theta e^{-\mu_1 t}\right)^3}\left(c\mu_1-1\right)<0.\]
\end{proof} 

\subsubsection{Quasi-Lower Solutions}
The construction of lower solutions be be similar to the construction from the Belousov-Zhabotinskii Equations above. Indeed, we define 
$$\underline{\varphi}(t)=
\underline{ \varphi_1}(t)=\begin{cases} \frac{ e^{\lambda_1 t}}{k}, \ t\le 0 ,\\
\frac{1}{k} , \ t> 0.
\end{cases}
$$, 
\begin{proposition}
Let $0<r_2<r_1$ be small, $k\ge2$ and $c>2$, then take $\underline{\varphi}_1(t), \bar{\varphi}(t)$  as above, then  
\begin{enumerate}
\item $0<\underline{\varphi}(t)\leq \bar{\varphi}(t) \le 1$ for all $t\in \R$
\item $\underline{\varphi}(t)$ is a subsolution of equation (\ref{cwm}).
\end{enumerate}
\end{proposition}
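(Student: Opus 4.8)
The plan is to follow the construction used for the Belousov--Zhabotinskii lower solution, adapting it to the scalar equation (\ref{cwm}). The first task is to pin down $\lambda_1$: it should be taken as the real root lying near $\eta_2=(c+\sqrt{c^2-2})/2$ of the modified characteristic equation
\[
\lambda^2-c\lambda e^{r_1\lambda}+\tfrac{1}{2}e^{(r_1-r_2)\lambda}=0,
\]
whose limiting form as $r_1,r_2\downarrow 0$ is $\lambda^2-c\lambda+\tfrac12=0$. Running the Rouch\'e argument of Claim \ref{claim 4} (equivalently Part (iii) of Proposition \ref{pro 1}) on a thin vertical strip around $\eta_2$ gives, for all sufficiently small $r_1,r_2$, a unique simple root there; uniqueness forces $\lambda_1=\overline{\lambda_1}$, so $\lambda_1$ is real, and continuity in $(r_1,r_2)$ yields $\lambda_1\to\eta_2$. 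Since $c>2$ makes $\eta_2>\mu_2>\mu_1$, this produces the strict inequality $\lambda_1>\mu_1>0$ for small delays, which is exactly what the ordering will require.

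For assertion (1), positivity of $\underline{\varphi}$ and $\bar{\varphi}\le 1$ are immediate from the closed forms. For $\underline{\varphi}\le\bar{\varphi}$ I would split at $t=0$. When $t>0$ the claim reduces to $1+\theta e^{-\mu_1 t}\le k$, and since $\mu_1>0$ and $0<\theta<1$ the left side is at most $1+\theta<2\le k$. When $t\le 0$, cross-multiplying the desired inequality $e^{\lambda_1 t}/k\le 1/(1+\theta e^{-\mu_1 t})$ turns it into
\[
e^{\lambda_1 t}+\theta e^{(\lambda_1-\mu_1)t}\le k,
\]
and because $\lambda_1>\mu_1>0$ and $t\le 0$ both exponentials are $\le 1$, so the left side is $\le 1+\theta<2\le k$. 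At $t=0$ the two pieces of $\underline{\varphi}_1$ agree and $1/k\le 1/(1+\theta)$ since $k\ge 1+\theta$, so the estimate is consistent across the kink.

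For assertion (2) I would verify the sub solution inequality on the three intervals cut out by the kink of $\underline{\varphi}_1$. On $t<-r_1$ all shifted arguments $t+r_1$ and $t+r_1-r_2$ are negative, so $\underline{\varphi}_1$ is purely exponential there; writing the reaction as $\underline{\varphi}_1(t+r_1-r_2)\big[\tfrac12-\underline{\varphi}_1(t+r_1)\big]+\tfrac12\underline{\varphi}_1(t+r_1-r_2)$ and absorbing the $\tfrac12$-term into the linear part makes the bracket $\lambda_1^2-c\lambda_1 e^{r_1\lambda_1}+\tfrac12 e^{(r_1-r_2)\lambda_1}$ vanish by the choice of $\lambda_1$; the surviving term is $\ge 0$ because $\underline{\varphi}_1\le 1/k\le 1/2$. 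On $-r_1\le t<0$ one has $t+r_1\ge 0$, so $\underline{\varphi}_1(t+r_1)=1/k$ is constant and $\underline{\varphi}_1'(t+r_1)=0$, while $\underline{\varphi}_1''(t)>0$ and $\underline{\varphi}_1(t+r_1-r_2)(1-1/k)>0$, giving a strictly positive sum regardless of the sign of $t+r_1-r_2$. On $t\ge 0$ everything is constant ($r_1>r_2$ guarantees $t+r_1-r_2>0$), and the expression collapses to $\tfrac1k(1-\tfrac1k)>0$. The non-differentiability points form the measure-zero set excluded in the definition of a sub solution.

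The main obstacle is the first step: producing a genuine real root $\lambda_1$ of the two-parameter transcendental equation and certifying $\lambda_1>\mu_1$ uniformly for small $(r_1,r_2)$. Everything else is an elementary case check, but the sign of the leftover term on $t<-r_1$ is what dictates the specific coefficient $\tfrac12$ and the hypothesis $k\ge 2$ (so that $\underline{\varphi}_1\le 1/2$); splitting off the full linear coefficient $1$ instead would flip that term negative and destroy the sub solution property.
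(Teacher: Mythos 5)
Your proposal is correct and follows essentially the same route as the paper: the same three-case split at $t=-r_1$ and $t=0$, the same device of peeling off $\tfrac12\underline{\varphi}_1(t+r_1-r_2)$ so the linear part vanishes by the choice of $\lambda_1$, and the same use of $k\ge 2$ to make the leftover term nonnegative. In fact you are more careful than the paper in two places it glosses over — you pin down $\lambda_1$ as the Rouch\'e-produced real root of $\lambda^2-c\lambda e^{r_1\lambda}+\tfrac12 e^{(r_1-r_2)\lambda}=0$ near $\eta_2$ (which is exactly what makes the Case~1 identity exact, rather than ``holds since $r_1,r_2$ are small''), and you actually verify assertion (1) via $\lambda_1>\mu_1$, which the paper omits as ``easy.''
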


\begin{proof}
The proof for part $i.)$ is easy to show, hence is omitted. For part $ii.)$ we proceed in cases like before. 

\noindent{\textit{Case 1}:} When $t\le - r_1$, we have the following since $r_1, r_2$ are sufficiently small 
$$
\underline{\varphi}^{\prime\prime}(t)-c\underline{\varphi}^{\prime}(t+r_1)+\frac{1}{2}  \underline{\varphi}(t+r_1-r_2)=0.
$$
Thus,
\begin{align*}
& \underline{\varphi}^{\prime\prime}(t)-c\underline{\varphi}^{\prime}(t+r_1)+\underline{\varphi}(t+r_1-r_2)\left(
1-\underline{\varphi}(t+r_1)\right) \\
&=\underline{\varphi}(t+r_1-r_2)\left(
\frac{1}{2}-\underline{\varphi}(t+r_1)\right)>0,
\end{align*}
since $\underline{\varphi}(t+r_1)=\frac{ e^{\lambda_1 t}}{k}\le \frac{1}{2}.$

\medskip
\noindent{\textit{Case 2}:$-r_1\le t\le 0.$}
There are technically two cases. The first is $t+r_1-r_2\le 0$ and the second being $t+r_1-r_2>0$. The cases as shown in the same manner, so we only show the first case. Indeed, the  equation becomes
\begin{align*}
 &B:=\underline{\varphi}^{\prime\prime}(t)-c\underline{\varphi}^{\prime}(t+r_1)+\underline{\varphi}(t+r_1-r_2)\left(
(1-\underline{\varphi}(t+r_1)\right) \\
&=\underline{\varphi}^{\prime\prime}(t)+\underline{\varphi}(t+r_1-r_2)\left(
(1-\underline{\varphi}(t+r_1)\right)>0,
\end{align*}
since $\underline{\varphi}^{\prime\prime}(t)>0, 1-\underline{\varphi}(t+r_1)\ge 0.$

\medskip
\noindent{\textit{Case 3}:$  0\le t.$} Since, $\underline{\varphi}(t)=\frac{1}{k}$ it is easy to see

\begin{align*}
 &B:=\underline{\varphi}^{\prime\prime}(t)-c\underline{\varphi}^{\prime}(t+r_1)+\underline{\varphi}(t+r_1-r_2)\left(
(1-\underline{\varphi}(t+r_1)\right) \\
&=\underline{\varphi}(t+r_1-r_2)\left(
(1-\underline{\varphi}(t+r_1)\right)>0.
\end{align*}
This completes the proof.
\end{proof}

\medskip \noindent
So, we have the following.
\begin{corollary}
Assume that $c>2$ is given. Then, Eq.(\ref{cwm}) has a traveling wave solution $u(x,t)=\phi (x+ct)$ for sufficiently delays $\tau_1,\tau_2$.
\end{corollary}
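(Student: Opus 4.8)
The plan is to read off the existence of the wave directly from the abstract monotone--iteration machinery of Section~\ref{monotone iteration}, once the two preceding propositions of this subsection are in hand. Concretely, I would apply Theorem~\ref{the main2} (together with Proposition~\ref{smoothprop1} and the sub/super-solution corollary that follows it) to the nonlinearity $f(\varphi)=\varphi(-r_2)(1-\varphi(0))$, with $K=1$, and to the explicit upper and lower solutions $\bar\varphi$ and $\underline\varphi$ already constructed. The wave speed $c>2$ is fixed throughout, and the delays are taken small: since $r_i=c\tau_i$, requiring $\tau_1,\tau_2$ small (in the regime $r_2<r_1$ demanded by the lower-solution construction) is the same as requiring $r_1,r_2$ small, which is exactly what the two propositions above need.

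First I would verify the three standing hypotheses (H1)--(H3) for this $f$. Hypothesis (H1) is immediate: $f(\hat 0)=0\cdot(1-0)=0$ and $f(\hat K)=f(\hat 1)=1\cdot(1-1)=0$. Hypothesis (H2) is the quasimonotonicity estimate (\ref{A}) already recorded in the text, namely $f_c(\varphi)-f_c(\psi)+\beta(\varphi(0)-\psi(0))\ge 0$ whenever $\hat 0\le\psi\le\varphi\le\hat K$, which holds for any $\beta\ge 1$; this is what makes $-{\cal L}^{-1}$ and $H$ monotone and hence $F=-{\cal L}^{-1}H$ monotone. Hypothesis (H3) follows because $f$ is Lipschitz and bounded on the order interval, so $H(\phi)=f_c(\phi_{\cdot+r_1})+\beta\phi(\cdot+r_1)$ is continuous from $BC(\R,[0,K])$ into $BC(\R,\R)$ with $\sup_{\phi\in\Gamma}\|H(\phi)\|<\infty$.

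Next I would assemble the ordered pair of barriers. By the first proposition of this subsection, $\bar\varphi(t)=1/(1+\theta e^{-\mu_1 t})$ lies in $\Gamma$ and is an upper solution of~(\ref{cwm}) for small $r_1,r_2$. By the second proposition, the piecewise-defined $\underline\varphi$ is a sub solution with $0<\underline\varphi(t)\le\bar\varphi(t)\le 1$ for all $t$; note that $\underline\varphi$ is only continuous (its derivative jumps at the corner $t=0$) and does not lie in $\Gamma$, since $\lim_{t\to+\infty}\underline\varphi(t)=1/k\neq 0$. This is precisely the configuration covered by Proposition~\ref{smoothprop1} and the sub/super-solution corollary: the smoothing inherent in $F=-{\cal L}^{-1}H$ upgrades $\underline\varphi$ to a bona fide lower solution, after which the monotone iterates $\phi_n:=F^n\bar\varphi$ form a bounded monotone sequence trapped between $\underline\varphi$ and $\bar\varphi$, which converges (via the precompactness of Lemma~\ref{lem compact} and the Schauder argument of Theorem~\ref{the main}) to a nondecreasing fixed point $\phi_0$ with $\lim_{t\to-\infty}\phi_0=0$ and $\lim_{t\to+\infty}\phi_0=a\in[0,1]$.

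The one genuinely substantive point---and the step I expect to be the crux---is pinning down $a=K=1$ rather than some intermediate value. Because $\phi_0\ge\underline\varphi$ and $\underline\varphi\to 1/k>0$, the limit $a$ is strictly positive. Since $\phi_0$ is a bounded solution of~(\ref{cwm}) with bounded second derivative, the corollary to Lemma~\ref{lem 3.7} forces $f(\hat a)=0$, that is $a(1-a)=0$; as $a>0$ this yields $a=1$. Hence $\phi_0\in\Gamma$ is a monotone wave profile, and undoing the change of variables $\xi=x+ct$ returns the traveling wave $u(x,t)=\phi_0(x+ct)$ of~(\ref{Classmodel}) for all sufficiently small delays $\tau_1,\tau_2$.
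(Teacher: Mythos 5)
Your proposal is correct and follows exactly the route the paper intends: the corollary is stated without an explicit proof precisely because it is the immediate assembly of the two preceding propositions (the upper solution $\bar\varphi\in\Gamma$ and the sub/lower solution $\underline\varphi$ with $0<\underline\varphi\le\bar\varphi\le 1$ and $\lim_{t\to+\infty}\underline\varphi=1/k\neq 0$) with Theorem~\ref{the main2}, Proposition~\ref{smoothprop1}, and the identification $a=K=1$ via the corollary to Lemma~\ref{lem 3.7}. Your verification of (H1)--(H3) and the limit argument fill in the details the paper leaves implicit, with no deviation in method.
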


\section*{Declarations}
\subsection*{Ethical Approval}
Ethical approval is not applicable to our research in this manuscript because it does not involve human or animal studies.
\subsection*{Funding}
This manuscript has no funding.

\subsection*{Availability of data and materials }
This manuscript has no associated data and materials.

\bibliographystyle{amsplain}

\end{document}